\begin{document}

\title[A complete 3D BS-diagram]{A complete 3-dimensional Blaschke-Santal\'o diagram}

\author{Ren\'e Brandenberg and Bernardo Gonz\'alez Merino}
\address{Zentrum Mathematik, Technische Universit\"at M\"unchen, Boltzmannstr. 3, 85747 Garching bei M\"unchen, Germany} \email{brandenb@ma.tum.de}

\address{Departamento de Matem\'aticas, Universidad de Murcia, Campus
Espinar\-do, 30100-Murcia, Spain} \email{bgmerino@um.es}

\thanks{The second author was partially supported by MINECO (Ministerio de Econom\'{\i}a
y Competitividad) and FEDER (Fondo Europeo de Desarrollo Regional) project MTM2012-34037,
and La Fundaci\'on S\'eneca Regi\'on de Murcia, under the programme
{\it Becas-contrato predoctorales de formaci\'on del personal investigador}, 2008.}

\subjclass[2000]{Primary 52A10,52A40; Secondary 52A20}

\keywords{Inner and outer radii,
  geometric inequalities, Blaschke diagram, Blaschke-Santal\'o diagram, shape diagrams}

\begin{abstract}
  We present a complete 3-dimensional Blaschke-Santal\'o
  diagram for planar convex bodies with respect to the
  four classical magnitudes inner and outer radius, diameter and (minimal) width in
  euclidean spaces.
\end{abstract}

\maketitle

\section{Introduction} \label{s:intro}

The focus of the paper are the standard radii  measured for the family $\CK^n$ of
convex bodies $K \subset \R^n$, where a convex body is a compact and convex set in euclidean
$n$-space.
The \cemph{dred}{diameter} $D(K)$ of $K$ is
the biggest distance between two points of $K$, the \cemph{dred}{width}
$w(K)$ is the minimal \cemph{dred}{breadth}, \ie~the smallest
distance between any two different parallel supporting hyperplanes of $K$.
The \cemph{dred}{inradius} $r(K)$
is the radius of a biggest ball contained in $K$, and the
\cemph{dred}{circumradius} $R(K)$ is the radius of the (unique) smallest ball
containing $K$.

A natural and very intuitive question is the following: if
$K\in\CK^n$ is given and we have fixed values for some of the previous
radii (say \eg $r$, $D$ and $R$), which is the range of possible values
of $w$ depending on $r,D$ and $R$? A comprehensive solution of this task in
$\CK^2$ is presented in the following in form of a Blaschke-Santal\'o diagram
(sometimes also called shape diagram).

\medskip

Let us start with some historical and more general review:
In \cite{Bl} Blaschke proposed the study of possible values for the volume $V(K)$, surface area $S(K)$,
and integral mean curvature $M(K)$ for any $K \in \CK^3$.
For doing so, he considered the mapping
\[ h:\CK^3 \to [0,1]^2, \text{ with } h(K):=\left(\frac{4\pi S(K)}{M(K)^2},
 \frac{48\pi^2 V(K)}{M(K)^3}\right). \]
The image $h(\CK^3)$ is well known as \cemph{dred}{Blaschke diagram}. Blaschke
realized that the isoperimetric inequality and the geometric inequalities of Minkowski
were not sufficient for a complete description of $h(\CK^3)$. A \emph{complete system of
inequalities} needed additional geometric inequalities relating $V$, $S$ and $M$, still
a famous open problem in convex geometry, see \cite{HCSa,SY}.

Reviving the idea of Blaschke, Santal\'o proposed in \cite{Sa} the study
of such diagrams for all triples of the magnitudes $r$, $w$, $D$, $R$, $p$ (perimeter) and
$A$ (area), for a start, for planar sets.
Once a triple is
fixed, say $(r,D,R)$, the function \[ g:\CK^2 \to [0,1]^2, \text{ with }
g(K):=\left(\frac{r(K)}{R(K)},\frac{D(K)}{2R(K)}\right)\] is considered, and its image $g(\CK^2)$ is
called a \cemph{dred}{Blaschke-Santal\'o diagram}.
Full descriptions of those diagrams for the triples $(A,p,w)$, $(A,p,r)$,
$(A,p,R)$, $(A,w,D)$, $(p,w,D)$ , and $(r,D,R)$
are already derived in \cite{Sa}.

An important first ingredient in the full description of the diagram for
$(r,D,R)$ in \cite{Sa} are the well known (and easy to prove)
inequalities
\begin{equation}\label{eq:1}
2r(K) \le w(K) \le D(K) \le 2R(K)
\end{equation}
as well as the inequality of Jung \cite{Ju}
\begin{equation}\label{eq:6}
 R(K) \le \sqfr{n}{2(n+1)} D(K)
\end{equation}
which are true for all $K\in \CK^n$.

Moreover the validity of the inequalities
\begin{equation}\label{eq:2}
w(K) \le r(K) + R(K) \le D(K)
\end{equation}
was shown in \cite{Sa} (there only for $n=2$, but easy to see to be true in general dimensions,
see \eg \cite{Br}).
Equality in \eqref{eq:2} holds true simultaneously, if $K$ is of constant width (\ie if $w(K) = D(K)$).
See \cite{BrK2} for more details and extensions on \eqref{eq:2}.

A final inequality derived in \cite{Sa} holds true (in the given form) for $K \in \CK^2$ only:
\begin{equation*}
  2R(K) \left(2R(K) + \sqrt{4R(K)^2-D(K)^2}\right)r(K) \ge D(K)^2\sqrt{4R(K)^2-D(K)^2},
\end{equation*}
with equality, if $K$ is an isosceles triangle.

This inequality, together with Jung's inequality \eqref{eq:6} and the relevant parts of \eqref{eq:1} and \eqref{eq:2}
forms a complete system of inequalities for $(r,D,R)$.

Moreover, Santal\'o observed that previously known inequalities did not form complete systems of inequalities
in any case of changing one of $(r,D,R)$ to the width $w$.

In \cite{HC} and \cite{HCS} Hern\'andez Cifre and Segura Gomis could state the missing inequalities:
\begin{align} \label{eq:3}
  (4R(K)^2-D(K)^2)D(K)^4 & \le 4w(K)^2R(K)^4 , \\
  \notag  (4r(K)-w(K))(w(K)-2r(K))R(K) &\le 2r(K)^3, \\
  \notag  D(K)^4(w(K)-2r(K))^2(4r(K)-w(K)) &\le 4r(K)^4w(K), \text{and} \\
  \notag  \sqrt{3} \left(w(K) - r(K)\right) & \le  D(K).
\end{align}
Moreover, they showed that those of the new inequalities together with those in \eqref{eq:1} to \eqref{eq:2},
which do only involve the appropriate radii, form complete systems of inequalities for the
triples $(w,D,R)$, $(r,w,R)$ and $(r,w,D)$. In \cite{HC2} Hern\'andez Cifre computed
complete systems of inequalities for the triples $(A,D,R)$ and $(p,D,R)$ and
B\"or\"oczky Jr., Hern\'andez Cifre, and Salinas gave complete diagrams for the triples $(A,r,R)$ and $(p,r,R)$ in \cite{BHCS}.

Thus all 4 Blaschke-Santal\'o diagrams of $\CK^2$ involving only the classical radii
$r,w,D$, and $R$ could be completed, but there are still 7 out of the 20 possible triples
involving also $A$ and $p$ where a full description
of the diagram is still missing or at least unproven.

\medskip

Recently, Blaschke-Santal\'o diagrams have been used in pattern recognition
and image analysis (see \cite{RDP5,RDP6,Pr}), as they help in the prediction
of the size and shape of 3-dimensional sets from their
2-dimensional projections. In \cite{RDP5,Pr} for example,
the diagrams (in this context mostly called shape-diagrams) have been combined with
probabilistic methods, such as maximum likelihood estimation in the second of them.

\medskip

Once there are complete systems of inequalities for some of the possible triples of magnitudes
amongst $A,p,r,w,D,R$, it is a natural step to consider complete systems of inequalities
for even more than three of those magnitudes, \eg to obtain stronger inequalities or for an even more
accurate classification of convex sets in the mentioned application in image analysis.
In \cite{TK} the quadruple $(A,p,w,D)$ has been considered, without deriving a complete
description (which is not so much surprising, as even for the triple $(A,p,D)$ a complete description
is still missing). We consider the case $(r,w,D,R)$, which is the unique diagram involving
four of the above magnitudes, \st~for all choices of triples of them
complete descriptions of the diagrams are known. To the best of our knowledge, besides \cite{TK} and
the preliminary work of this paper in \cite{Br}, it is the only paper considering more than three of these magnitudes.

The study shows the necessity of \emph{new inequalities} relating all four radii at
once. This is done by describing the diagram's skeleton, \ie~its
boundary structure consisting of 0-,1-, and 2-dimensional differential manifolds
(see below for a proper definition).

\medskip

Further notation is the following: If $l\in\N$, we abbreviate $[l] := \{1,\dots,l\}$.
For a general set $C \subset \R^n$, we write $\aff(C)$ and $\conv(C)$ for the
\cemph{dred}{affine hull} and the \cemph{dred}{convex hull} of $C$, respectively,
and for any $x,y\in\R^n$ we denote by $[x,y]$ the \cemph{dred}{segment} $\conv\{x,y\}$
whose endpoints are $x$ and $y$.
If $K\in\CK^n$ we write $\ext(K)$ for the set of \cemph{dred}{extreme points} of $K$
and any $x \in \ext(K)$ is said to be \cemph{dred}{exposed}, if there exists a hyperplane
  $H$ supporting $K$, \st~$K \cap H = \{x\}$.

The euclidean unit ball and unit sphere are denoted by $\B,\S \subset \R^n$,
respectively, and
the closed (open) \cemph{dred}{semisphere} $\{x\in\S : u^Tx \ge 0\}$ ($\{x\in\S : u^Tx > 0\}$)
with $u\in\R^n\setminus \{0\}$, by $\S_u^\ge$ ($\S_u^>$).
By $\dist{A}{B}$ we denote the usual euclidean distance between two closed sets $A,B \subset \R^n$,
and write $\dist{a}{B}$ or $\dist{A}{b}$ if one of the sets is a singleton.

For a pair of bodies $K,L \in \CK^n$, the \cemph{dred}{Minkowski sum} of $K$ and $L$ is defined as
\[K+L:=\{x_1+x_2:x_1\in K,x_2\in L\},\]
and $\lambda K:=\{\lambda x:x\in K\}$ with $\lambda\in\R$ is the $\lambda$
\cemph{dred}{dilatation} of $K$. We abbreviate $K-L := K + (-L)$ and $K+x:=K+\{x\}$
for $x \in \R^n$.
A body $K$ is \cemph{dred}{$0$-symmetric} if $K = -K$ and \cemph{dred}{centrally symmetric} if
there exists $c \in \R^n$, \st~$K-c$ is 0-symmetric.

For any body $K\in\CK^n$, a \cemph{dred}{completion} of $K$ is defined
as a set $C_K$ satisfying $K \subset C_K$ and $D(K)=D(C_K)=w(C_K)$.
Moreover, $C_K$ is called a \cemph{dred}{Scott-completion} of $K$ if, in addition, $R(K)=R(C_K)$
(it was shown in \cite{Sc81} that in euclidean space such a completion always exists).

\medskip

We go on with a little series of well known propositions, which we will use later. The first
collects results taken from \cite{GK}:

\begin{prop}\label{prop:exposed}
  For any $K \in \CK^n$
  \begin{enumerate}[a)]
  \item every diametral pair of points in $K$ is a pair of exposed
    (and therefore extreme) points in $K$.
  \item every pair $L_1,L_2$ of parallel supporting hyperplanes of $K$
    at distance $w(K)$, supports a segment with endpoints in $K\cap L_1$ and $K\cap L_2$
        perpendicular to both hyperplanes. Moreover, if $K\in\CK^n$ is a polyhedron, then
        $\dim(K\cap L_1) + \dim(K\cap L_2) \ge d-1$ (which in case of $n=2$ means that
        at least one of the intersections $K\cap L_i$, $i=1,2$ contains a segment).
    \end{enumerate}
\end{prop}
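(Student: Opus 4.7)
For part (a), my plan is a short ball argument. Fix $x, y \in K$ with $\|x-y\| = D(K)$ and let $B := y + D(K)\B$. Since every $z \in K$ satisfies $\|z-y\| \le D(K)$, we have $K \subseteq B$. The hyperplane $H$ through $x$ with outer normal $x-y$ is the unique tangent hyperplane to $B$ at $x$, so $B \cap H = \{x\}$. Hence $H$ supports $K$ and $K \cap H = \{x\}$, witnessing that $x$ is exposed; by symmetry the same holds for $y$.

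For part (b), let $u \in \S$ be the common unit normal to $L_1, L_2$ oriented so that $u^T x = b_i$ on $L_i$ with $b_2 - b_1 = w(K)$, and set $F_i := K \cap L_i$. The idea is to differentiate the breadth function $g(v) := h_K(v) + h_K(-v)$ at its minimizer $v = u$. Since the subdifferential of the support function $h_K$ at $u$ equals $F_2$ and at $-u$ equals $F_1$, for any unit $v \in u^\perp$ the right directional derivative is $g'(u; v) = \phi(v) := \max_{x_2 \in F_2} v^T x_2 - \min_{x_1 \in F_1} v^T x_1 = h_{F_2 - F_1}(v)$. Minimality of $g$ at $u$ forces $\phi(v) \ge 0$ for every unit $v \perp u$; equivalently, the support function of $F_2 - F_1$ is nonnegative on all of $u^\perp \cap \S$, which (by separation inside the hyperplane $u^\perp$) means $0 \in \pi_{u^\perp}(F_2 - F_1)$. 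Hence there exist $x_i \in F_i$ with $x_2 - x_1 \in \R u$, and since $u^T(x_2 - x_1) = w(K)$, the segment $[x_1, x_2]$ has the required direction and length.

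For the polyhedral dimension bound, I plan to upgrade $\phi(v) \ge 0$ to strict inequality. Polyhedrality guarantees that for every unit $v \perp u$ and all sufficiently small $\theta > 0$, the argmax of $u_\theta^T x$ on $K$ still lies in $F_2$ and the argmin in $F_1$, where $u_\theta := \cos\theta \cdot u + \sin\theta \cdot v$. This makes the expansion exact, $g(u_\theta) = \cos\theta \cdot w(K) + \sin\theta \cdot \phi(v)$, and comparing with $g(u_\theta) \ge w(K)$ rearranges to $\phi(v) \ge w(K) \tan(\theta/2) > 0$. Strict positivity of the support function of $F_2 - F_1$ on the full unit sphere of $u^\perp$ places $0$ in the interior of $\pi_{u^\perp}(F_2 - F_1)$, so this projection has full dimension $n-1$; combined with $\dim(F_2 - F_1) \le \dim F_1 + \dim F_2$, this yields $\dim F_1 + \dim F_2 \ge n - 1$. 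The main technical point to handle carefully is the subdifferential identification at the non-smooth point $u$, together with the bookkeeping that the validity range $\theta_0(v) > 0$ in the exact expansion is strictly positive for every $v$ when $K$ is a polyhedron; once both are in hand, the remainder is separation geometry inside the hyperplane $u^\perp$.
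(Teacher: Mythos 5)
The paper itself offers no proof of this proposition --- it is stated as a collection of results quoted from Gritzmann--Klee \cite{GK} --- so there is nothing internal to compare against; I am judging your argument on its own. It is correct. Part a) is the standard ball argument and is fine as written. For part b), the variational route works: the one-sided directional derivative of the support function $h_K$ at $u$ in a direction $v$ is indeed the support function of the face $F(K,u)$ evaluated at $v$ (Schneider, Thm.~1.7.2), so $g'(u;v)=h_{F_2-F_1}(v)$ for $v\in u^\perp$; first-order optimality of $u$ for the breadth (the passage from minimality on the unit sphere to nonnegativity of the ray-derivative is harmless because $g$ is positively $1$-homogeneous and $\|u+tv\|=1+O(t^2)$ for $v\perp u$) gives $h_{F_2-F_1}\ge 0$ on $u^\perp$, and the separation step then produces the perpendicular chord of length $w(K)$. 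The polyhedral refinement is also sound: for a polytope the maximizers of $u_\theta^Tx$ over the (finite) vertex set are, for small $\theta$, a subset of the maximizers at $u$, which makes your expansion $g(u_\theta)=\cos\theta\, w(K)+\sin\theta\,\phi(v)$ exact and yields $\phi(v)\ge w(K)\tan(\theta/2)$; strict positivity on all of the unit sphere of $u^\perp$ then forces the projection of $F_2-F_1$ to be full-dimensional in $u^\perp$, and $\dim(F_2-F_1)\le\dim F_1+\dim F_2$ finishes it. Two minor remarks: the strictness, and hence the dimension bound, degenerates when $w(K)=0$ (a lower-dimensional polytope), an edge case the stated proposition also glosses over; and you do not need the validity range $\theta_0(v)$ to be uniform in $v$ --- a per-direction $\theta_0(v)>0$ already gives $\phi(v)>0$ for each $v$, and compactness of the sphere in $u^\perp$ together with continuity of $h_{F_2-F_1}$ then supplies the uniform lower bound needed to place $0$ in the relative interior. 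Neither point affects the validity of the proof in the intended full-dimensional setting.
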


The first part of the following proposition about the euclidean outer radius was shown already in
\cite{BF}. For the part about the inner radius we refer to the general optimality conditions
for containment under homothetics given in \cite{BrK}.

\begin{prop}\label{prop:CONT}
  Let $K\in\CK^n$ and $c\in\R^n$, \st~$c+\rho\B \subseteq K\subset \B$. Then
  \begin{enumerate}[a)]
  \item $R(K)=1$, iff there exist $k \in \{2,\dots,n+1\}$ and
    $p^1,\dots,p^k \in \bd(K\cap \S)$ \st~$0\in\conv\{p^1,\dots,p^k\}$.
  \item $r(K)=\rho$, iff there exists $l \in \{2,\dots,n+1\}$, $q^1,\dots,q^l \in \bd(K-c) \cap \rho\S$
    and $u^1,\dots,u^l \in\S$, \st~$(u^i)^Tq^i = \rho$, $i \in [l]$,
    $K-c\subset \bigcap_{i \in [l]}\{x\in\R^n:(u^i)^Tx \le \rho \}$, and $0\in\conv\{u^1,\dots,u^l\}$.
  \end{enumerate}
\end{prop}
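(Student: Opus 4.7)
The plan is to prove both characterizations by combining the Hahn--Banach separation theorem with Carath\'eodory's theorem: in each part, the convex-hull condition is equivalent to the impossibility of perturbing the enclosing / inscribed ball into a strictly better one. One direction reduces to a short explicit calculation, the other to a contradiction argument that constructs such a perturbation whenever the convex-hull condition fails.

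For part (a), the ``if'' direction is one line: given $p^1,\ldots,p^k\in K\cap\S$ with $0\in\conv\{p^1,\ldots,p^k\}$, any $c\neq 0$ admits some $i$ with $c^Tp^i\le 0$, so $|p^i-c|^2=1-2c^Tp^i+|c|^2\ge 1+|c|^2>1$; combined with $K\subset\B$ this forces $R(K)=1$. Conversely, assuming $R(K)=1$ and $0\notin\conv(K\cap\S)$, separating $0$ from this compact convex hull yields $u\in\S$ and $\varepsilon>0$ with $u^Tp\ge\varepsilon$ on $K\cap\S$. Splitting $K$ into a small neighborhood of $K\cap\S$ (where $K-\delta u$ strictly shrinks in norm) and its complement (bounded away from $\S$ by compactness), a sufficiently small $\delta>0$ yields $K-\delta u\subset(1-\eta)\B$ for some $\eta>0$, contradicting $R(K)=1$. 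Finally Carath\'eodory reduces the support to $k\le n+1$, and $k\ge 2$ is forced by $|p^i|=1$.

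Part (b) follows the same strategy with support \emph{directions} replacing sphere points. For ``if'', given any putative larger ball $c'+\rho'\B\subset K$ with $\rho'>\rho$, plugging $x=c'-c+\rho'u^i\in K-c$ into $(u^i)^Tx\le\rho$ yields $(u^i)^T(c'-c)\le\rho-\rho'<0$ for every $i$, contradicting $0\in\conv\{u^1,\ldots,u^l\}$. For the converse, set $T=\bd(K-c)\cap\rho\S$ and $\mathcal{N}=\{q/\rho:q\in T\}$; each $q/\rho$ is automatically an outward normal of a supporting hyperplane of $K-c$ at $q$ by convexity. If $0\notin\conv\mathcal{N}$, separation gives $v\in\S$ and $\varepsilon>0$ with $v^Tu\ge\varepsilon$ on $\mathcal{N}$. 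Translating the inscribed ball's center by $-\delta v$ gains slack $\delta u^Tv\ge\delta\varepsilon$ at every contact supporting hyperplane, and the compactness estimate $\dist(0,\bd(K-c)\setminus U)\ge\rho+\eta$ on the complement of a suitable neighborhood $U\supset T$ survives the small perturbation, so a simultaneous dilation to radius $\rho+\delta\varepsilon/2$ keeps the ball inside $K-c$ and contradicts $r(K)=\rho$. Carath\'eodory then gives $l\le n+1$ and $l\ge 2$ is immediate.

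The main obstacle is the converse in (b): one must argue that $\mathcal{N}$ is compact so that the separation gap $\varepsilon$ is uniform over a possibly infinite contact set, that a neighborhood $U$ of $T$ can be chosen so both the slack estimate at the contact supporting hyperplanes and the compactness estimate outside $U$ deliver uniform bounds, and that these together control the perturbed and enlarged ball globally inside $K-c$ rather than merely inside the contact halfspaces. The corresponding issues in (a) are milder because the perturbation is a pure translation of the centered unit ball and the contact set already lies on the single sphere $\S$.
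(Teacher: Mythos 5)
The paper does not actually prove Proposition \ref{prop:CONT}: part (a) is attributed to Bonnesen--Fenchel and part (b) to the containment-optimality conditions in the cited work of Brandenberg and K\"onig, so your self-contained argument is necessarily a different route. It is the standard variational proof (strict separation of $0$ from the contact set, a perturbation of the ball when separation succeeds, and Carath\'eodory to cut the support down to $n+1$ points), and the ``if'' directions and all of part (a) are correct as written. The one place that needs tightening is exactly the spot you flag yourself: in the converse of (b), the slack $\delta u^Tv\ge\delta\varepsilon$ \emph{at the contact supporting hyperplanes} only yields containment of the perturbed ball in the contact halfspaces, and this does not by itself control the boundary points of $K-c$ that lie near the contact set $T$ but strictly outside $\rho\S$ (they sit on the same side of those hyperplanes as the new center, so the hyperplane distance is irrelevant for them). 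The gap closes with either of two routine observations. Working with the support function, $h_{K-c}(u)\ge\rho$ for all $u\in\S$ with equality exactly on the (compact) normal set $\mathcal N$; on the compact set of directions $u$ with $v^Tu\le\varepsilon/2$ one has $h_{K-c}(u)\ge\rho+\eta$ for some $\eta>0$, and then $-\delta v^Tu+\rho+s\le h_{K-c}(u)$ holds for all $u$ once $\delta<\eta/2$ and $s=\min\{\delta\varepsilon/2,\eta/2\}$. Alternatively, staying with boundary points, note that $v^Tx\ge\rho\varepsilon/2$ on a sufficiently small neighborhood $U$ of $T$, whence $|x+\delta v|^2\ge|x|^2+2\delta v^Tx\ge\rho^2+\delta\rho\varepsilon$ there, while outside $U$ the estimate $|x|\ge\rho+\eta$ applies; either way the translated ball of radius $\rho+c\delta$ fits inside $K-c$, contradicting $r(K)=\rho$. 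With that repair your proof is complete, and it has the advantage over the paper of being elementary and self-contained rather than an appeal to the literature.
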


The last proposition is ancient and best known as the \enquote{inscribed angle theorem}:

\begin{prop}\label{lem:angle}
  For any triangle $T:=\conv\{p^1,p^2,p^3\}$ with $p^1,p^2,p^3 \in \S$ and 0 and $p^3$ on
  the same side of $\aff\{p^1,p^2\}$
  let $\gamma$ denote the angle of $T$ at $p^3$. Then the angle
  of the triangle $\conv\{p^1,p^2,0\}$ at 0 is $2 \gamma$ (independently of the position of $p^3$).

  Moreover, if $p^3 \in \inte(\B)$ (or $p^3 \notin \B$), but still
  on the same side of $\aff\{p^1,p^2\}$ than 0,
  the angle in 0 is strictly smaller (or, respectively, strictly greater) than $2\gamma$.
\end{prop}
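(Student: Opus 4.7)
The first assertion is the classical inscribed-angle theorem, which I would prove by the standard isoceles-triangle argument. Drawing the segment $[0,p^3]$, each of the triangles $\conv\{0,p^1,p^3\}$ and $\conv\{0,p^2,p^3\}$ is isoceles with legs of length $1$ (since $\|p^i\|=1$ for every $i\in\{1,2,3\}$), so its two base angles coincide. Write $\beta_i$ for the common base angle of $\conv\{0,p^i,p^3\}$; then the apex angle at $0$ equals $\pi-2\beta_i$. A short case distinction on whether $[0,p^3]$ meets $[p^1,p^2]$ or not now gives $\gamma=\beta_1\pm\beta_2$, and summing (or subtracting) the two apex angles at $0$ yields that the angle at $0$ in $\conv\{0,p^1,p^2\}$ equals $2(\beta_1\pm\beta_2)=2\gamma$, as claimed.

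The moreover part I would derive directly from the first assertion by a triangle angle chase. Let $\alpha$ denote the angle at $0$ of $\conv\{0,p^1,p^2\}$, so that the goal is to compare $\gamma$ with $\alpha/2$. After possibly swapping $p^1$ and $p^2$, the open ray from $p^1$ in direction $p^3-p^1$ is not tangent to $\S$ at $p^1$ and hence meets $\S$ in a unique further point $q$; since this ray (minus its endpoint $p^1$) is contained in the open $0$-half-plane and $q\neq p^2$ (because $p^3\notin\aff\{p^1,p^2\}$), the point $q$ lies on the $0$-side of $\aff\{p^1,p^2\}$. By the first part applied to the triple $(p^1,p^2,q)$, the angle of $\conv\{p^1,p^2,q\}$ at $q$ equals $\alpha/2$.

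If $p^3\in\inte(\B)$, then $p^3$ lies strictly between $p^1$ and $q$ on the ray. The auxiliary triangle $\conv\{p^2,p^3,q\}$ has angle $\pi-\gamma$ at $p^3$ (the directions from $p^3$ to $p^1$ and to $q$ are opposite) and $\alpha/2$ at $q$ (the direction from $q$ to $p^3$ agrees with that from $q$ to $p^1$), so by the triangle angle sum its angle at $p^2$ equals $\gamma-\alpha/2$, which is strictly positive as the triangle is non-degenerate; hence $\alpha<2\gamma$. If $p^3\notin\B$, then $q\in(p^1,p^3)$, and the same auxiliary triangle now has angle $\gamma$ at $p^3$ and the supplement $\pi-\alpha/2$ at $q$, so its angle at $p^2$ equals $\alpha/2-\gamma>0$, giving $\alpha>2\gamma$.

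The main obstacle is really only the bookkeeping with supplements in the first part; if desired this can be bypassed by placing coordinates so that $p^1$ and $p^2$ are symmetric about the $x$-axis and verifying $\alpha=2\gamma$ trigonometrically. Once the correct auxiliary triangle $\conv\{p^2,p^3,q\}$ is identified, the moreover part is a one-line angle chase.
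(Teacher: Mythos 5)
The paper never proves this proposition: it is introduced as ``ancient and best known as the inscribed angle theorem'' and used as a black box, so there is no in-paper argument to compare yours against. Your proof is the classical one and is correct in substance, and the auxiliary triangle $\conv\{p^2,p^3,q\}$ is a clean way to get the strict inequalities; two spots in the write-up are imprecise, though neither is fatal. First, the case distinction ``whether $[0,p^3]$ meets $[p^1,p^2]$'' is vacuous: both endpoints of $[0,p^3]$ lie on the same side of $\aff\{p^1,p^2\}$, so the segments never meet. What you mean is whether the line through $0$ and $p^3$ separates $p^1$ from $p^2$, \ie~whether $0\in\inte(T)$. Relatedly, literally \emph{summing} the apex angles $\pi-2\beta_1$ and $\pi-2\beta_2$ gives $2\pi-2(\beta_1+\beta_2)$; the angle at $0$ is $2\pi$ minus this sum when $0\in\inte(T)$, not the sum itself. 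You flag this supplement bookkeeping yourself, and the classical exterior-angle variant (extend $[p^3,0]$ beyond $0$ to $-p^3$ and use that the exterior angle of the isosceles triangle at $0$ is $2\beta_i$) avoids it entirely. Second, in the moreover part, ``is not tangent \dots and hence meets $\S$ in a unique further point'' is not a valid implication: a non-tangent ray issued from $p^1\in\S$ can point strictly outward and meet $\B$ only in $p^1$ (this can genuinely happen for $p^3\notin\B$). What saves you is precisely the ``after possibly swapping'' clause, but it needs its own justification: if neither the ray from $p^1$ nor the ray from $p^2$ towards $p^3$ entered $\inte(\B)$, one would have $(p^3)^Tp^i\ge 1$ for $i=1,2$, hence $(p^3)^Tm\ge 1>\norm[m]^2$ for $m=\nicefrac12(p^1+p^2)$ the foot of the perpendicular from $0$ onto $\aff\{p^1,p^2\}$, placing $p^3$ on the wrong side of that line. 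With that fact supplied, the remainder — $q$ on the $0$-side, angle $\nicefrac{\alpha}{2}$ at $q$, non-degeneracy of $\conv\{p^2,p^3,q\}$ (collinearity would force $p^3\in\aff\{p^1,p^2\}$), and the angle chase — is complete and correct.
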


\medskip

In Section \ref{s:diagram} the way we proceed for the description of
the whole diagram is explained.

In Section \ref{s:main ineq} we present a collection of nine (generally)
valid inequalities
completely describing the diagram in Section \ref{s:diagram}.
Six of these inequalities were known before but three of them are totally new, relating
all four basic radii at once (in a non-trivial way).

Every family of sets attaining equality in one of the
inequalities above is mapped onto a compact connected subset of a $2$-dimensional differential
manifold within $\R^3$.
We call them \cemph{dred}{facets} of the diagram. Moreover, the
common boundaries between any two facets are called \cemph{dred}{edges} of the diagram
and the bodies appearing in the intersection of three (or more) facets
are called \cemph{dred}{vertices} of the diagram.
The families forming the facets and edges, as well as all vertices are described
in Section \ref{s:skeleton}.

Section \ref{s:proofs} is devoted to the proofs of the new inequalities presented in Section \ref{s:main ineq},
while the paper is finished with a short outview in Section \ref{s:outview}.

\section{Main ideas for explaining the diagram}\label{s:diagram}

Following the idea of Blaschke and Santal\'o, we define
\begin{equation}\label{def:function}
f : \CK^n \to [0,1]^3,\quad
f(K)=\left(\frac{r(K)}{R(K)},\frac{w(K)}{2R(K)},\frac{D(K)}{2R(K)}\right)
\end{equation}
and call $f(\CK^n)$ a \cemph{dred}{3-dimensional Blaschke-Santal\'o diagram}.
The following Lemma is taken from \cite{Br}:

\begin{lem} \label{lem:starshaped}
 $f(\CK^n) = f\left(\{K \in \CK^n : \B \text{ is the circumball of } K\}\right)$
 is starshaped with respect to $f(\B)=(1,1,1)$.
\end{lem}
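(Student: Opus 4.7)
The plan is to verify the two assertions of the lemma in turn.

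For the set equality, I would use the invariance of $f$ under translations and positive dilatations: each of the four radii $r,w,D,R$ is translation invariant and positively homogeneous of degree one, so $f(\lambda K+t)=f(K)$ for every $\lambda>0$ and $t\in\R^n$. Given $K\in\CK^n$ with circumcenter $p$, the body $K':=(K-p)/R(K)$ has $\B$ as circumball and satisfies $f(K')=f(K)$, which yields the non-trivial inclusion; the reverse one is immediate.

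For the starshapedness, I would parameterize the segment from $f(K)$ to $f(\B)=(1,1,1)$ via the family of outer parallel bodies $K_t:=K+t\B$, $t\ge 0$. The key ingredient is that all four radii behave linearly under Minkowski sum with a ball:
\begin{align*}
R(K_t)&=R(K)+t, & r(K_t)&=r(K)+t,\\
D(K_t)&=D(K)+2t, & w(K_t)&=w(K)+2t.
\end{align*}
Setting $\mu:=t/(R(K)+t)\in[0,1)$, a direct algebraic manipulation then gives
\[
f(K_t)=(1-\mu)\,f(K)+\mu\,(1,1,1).
\]
As $t$ sweeps $[0,\infty)$, $\mu$ sweeps $[0,1)$ and the corresponding image traces out the half-open segment from $f(K)$ to $(1,1,1)$; the missing endpoint is attained by $\B$ itself, so the closed segment lies in $f(\CK^n)$, establishing starshapedness with respect to $f(\B)$.

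The only mildly technical step is the Minkowski-linearity displayed above. For $D$ and $w$ it drops out of the support-function formulas $D(K)=\max_{u\in\S}(h_K(u)+h_K(-u))$ and $w(K)=\min_{u\in\S}(h_K(u)+h_K(-u))$ together with $h_{K+t\B}=h_K+t$ on $\S$. For $R$ and $r$ the cleanest argument uses the minimax representations $R(K)=\min_c\max_{u\in\S}(h_K(u)-u^Tc)$ and $r(K)=\max_c\min_{u\in\S}(h_K(u)-u^Tc)$, from which the additive constant $t$ pulls straight out. I do not anticipate any deeper obstacle than this routine verification.
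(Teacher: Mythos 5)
Your proof is correct and follows essentially the same route as the paper, which simply records the two identities $q(\lambda(K+c))=\lambda q(K)$ and $q(\lambda K+(1-\lambda)\B)=\lambda q(K)+(1-\lambda)q(\B)$ for each radius $q\in\{r,w,D,R\}$. Your outer parallel bodies $K+t\B$ are just a reparametrization of the paper's convex combinations (for $R(K)=1$ one has $(K+t\B)/(1+t)=\lambda K+(1-\lambda)\B$ with $\lambda=1/(1+t)$), and your support-function verification merely fills in what the paper calls obvious.
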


\begin{proof}
  If $K \in \CK^n$, $c \in \R^n$, $\lambda \in [0,1]$ and $q$ any of the four radii functionals
  $r,w,D,R$,
  it obviously holds $q(\lambda (K+c)) = \lambda q(K)$ and
  $q(\lambda K+(1-\lambda)\B)=\lambda q(K)+(1-\lambda)q(\B)$.
\end{proof}

The latter part of Lemma \ref{lem:starshaped} means that the diagram has no \enquote{holes}
and therefore it suffices to describe the sets $K \in \bd(\CK^n)$, with
$\B$ being their circumball, mapped to the boundary of the diagram.

\begin{lem} \label{lem:Completion}
Let $K,C_K\in\CK^n$, \st $C_K$ is a completion of $K$
and $K_{\lambda}:=\lambda K+(1-\lambda)C_K$, $\lambda \in [0,1]$.
Then $D(K_{\lambda})=D(K)$ and $w(K_{\lambda})=\lambda w(K) + (1-\lambda)w(C_K)$ for all $\lambda\in[0,1]$.
\end{lem}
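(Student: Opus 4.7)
The plan is to work with the breadth function $b(K,u):=h_K(u)+h_K(-u)$ for $u\in\S$, where $h_K$ denotes the support function. This functional is Minkowski-linear, and both diameter and width can be read off from it:
\begin{equation*}
D(K)=\max_{u\in\S} b(K,u),\qquad w(K)=\min_{u\in\S} b(K,u).
\end{equation*}
The inequality $D(K)\ge b(K,u)$ is immediate (project any two points onto $u$), and Proposition \ref{prop:exposed}\,a) gives the reverse: if $p,q\in K$ realize the diameter, then the hyperplanes orthogonal to $q-p$ through $p$ and $q$ both support $K$, so $b(K,(q-p)/\|q-p\|)=D(K)$.

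The key observation is then that the completion $C_K$ has \emph{constant breadth} equal to $D(K)$: since $w(C_K)=D(C_K)=D(K)$, the minimum and maximum of $b(C_K,\cdot)$ on $\S$ coincide, forcing $b(C_K,u)=D(K)$ for every $u\in\S$. Combined with the Minkowski-linearity of $b$,
\begin{equation*}
b(K_\lambda,u)=\lambda\, b(K,u)+(1-\lambda)\, b(C_K,u)=\lambda\, b(K,u)+(1-\lambda)D(K)\qquad\text{for all }u\in\S.
\end{equation*}

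Taking the maximum over $u$ yields $D(K_\lambda)=\lambda D(K)+(1-\lambda)D(K)=D(K)$, using $\max_u b(K,u)=D(K)$. Taking the minimum over $u$ gives $w(K_\lambda)=\lambda w(K)+(1-\lambda)D(K)=\lambda w(K)+(1-\lambda)w(C_K)$, which is the second claim. I do not expect a serious obstacle: the only non-trivial ingredient is the characterization $D(K)=\max_u b(K,u)$, and this is classical and moreover already available in the paper via Proposition \ref{prop:exposed}\,a). No use is made of $R(K)$, so the hypothesis that $C_K$ be a Scott-completion is not needed here.
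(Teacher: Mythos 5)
Your proof is correct and follows essentially the same route as the paper: the crux in both is that the breadth functional is Minkowski-linear and that the completion $C_K$ has constant breadth $w(C_K)=D(C_K)=D(K)$, so the minimum over directions splits and yields the width formula. The only (minor) difference is that the paper gets $D(K_\lambda)=D(K)$ from the sandwich $K\subset K_\lambda\subset C_K$ and monotonicity of the diameter, whereas you read it off from $D=\max_u b(\cdot,u)$; both are fine, and your observation that the Scott-condition $R(C_K)=R(K)$ is not needed here is also consistent with the statement as given.
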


\begin{proof}
  Since $K\subset C_K$ and $D(K)=D(C_K)$ it immediately follows from $K \subset K_{\lambda} \subset C_K$ that
    $D(K_{\lambda})=D(K)$ for every $\lambda\in[0,1]$.
    Now, let $s^* \in \S$, \st~the breadth $b_{s*}(K)$ of $K$
    in direction of $s^*$ is $b_{s^*}(K)=w(K)$.
    Since $C_K$ is of constant width and because the breadth is linear with respect to the Minkowski sum,
    we obtain
    \begin{equation*}
      \begin{split}
        w(K_{\lambda})&=\min_{s\in\S}b_s(K_{\lambda})=\min_{s\in\S}(\lambda b_s(K) + (1-\lambda)w_s(C_K)) \\
        &\le \lambda b_{s^*}(K) + (1-\lambda) w(C_K) = \lambda w(K) + (1-\lambda) w(C_K) \le w(K_{\lambda}).
      \end{split}
    \end{equation*}
\end{proof}

\begin{lem} \label{lem:CompletionsW=r+R}
  For any $K\in\CK^n$ satisfying the left inequality in \eqref{eq:2} with equality (\ie~$w(K)=r(K)+R(K)$),
  $C_K$ being its Scott-completion,
  and $K_{\lambda}:=\lambda K+(1-\lambda)C_K$, $\lambda\in[0,1]$, it holds
  $f(K_{\lambda})=\lambda f(K)+(1-\lambda)f(C_K)$
  and therefore $w(K_{\lambda})=r(K_{\lambda})+R(K_{\lambda})$ for all $\lambda\in[0,1]$.
\end{lem}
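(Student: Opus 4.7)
The plan is to show that each coordinate functional of $f$ evaluated at $K_\lambda$ is affine in $\lambda$ (with the $R$-component in fact constant), which at once yields $f(K_\lambda) = \lambda f(K) + (1-\lambda) f(C_K)$. The concluding identity $w(K_\lambda) = r(K_\lambda) + R(K_\lambda)$ will then drop out by summing the affine formulas for $r$ and $R$ together with the two input equalities $w(K) = r(K)+R(K)$ (by hypothesis) and $w(C_K) = r(C_K)+R(C_K)$ (since $C_K$ is of constant width, both inequalities in \eqref{eq:2} are tight there).

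I begin with the three easy components. First, every $x \in K$ satisfies $x = \lambda x + (1-\lambda) x \in \lambda K + (1-\lambda) C_K$ because $K \subseteq C_K$, while convexity gives $K_\lambda \subseteq C_K$; the resulting chain $K \subseteq K_\lambda \subseteq C_K$, combined with $R(K) = R(C_K)$ (the defining property of a Scott-completion) and monotonicity of $R$, forces $R(K_\lambda) = R(K)$. Second, Lemma~\ref{lem:Completion} applies (as $C_K$ is in particular a completion of $K$) and yields $D(K_\lambda) = D(K)$ and $w(K_\lambda) = \lambda w(K) + (1-\lambda) w(C_K)$. Third, the lower bound
\[ r(K_\lambda) \;\ge\; w(K_\lambda) - R(K_\lambda) \;=\; \lambda\,r(K) + (1-\lambda)\,r(C_K) \]
follows at once from applying \eqref{eq:2} to $K_\lambda$ and substituting the identities above (equivalently, it is the Minkowski-concavity of $r$ applied to the inballs of $K$ and $C_K$).

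The main obstacle will be the matching upper bound $r(K_\lambda) \le \lambda r(K) + (1-\lambda) r(C_K)$; the generic inequalities $2r \le w$ from \eqref{eq:1} and $r \le R$ are too loose. My plan is to certify the candidate inball via Proposition~\ref{prop:CONT}(b). Letting $c_K, c_{C_K}$ be incenters of $K$ and $C_K$ and setting $c^* := \lambda c_K + (1-\lambda) c_{C_K}$, $\rho^* := \lambda r(K) + (1-\lambda) r(C_K)$, the Minkowski combination $\lambda(c_K + r(K)\B) + (1-\lambda)(c_{C_K} + r(C_K)\B) = c^* + \rho^*\B$ sits in $K_\lambda$, so it remains to produce contact points $q^i \in \bd(K_\lambda - c^*) \cap \rho^*\S$ with outer normals $u^i$ and $0 \in \conv\{u^i\}$. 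The bridge will be the identity $h_{K_\lambda} = \lambda h_K + (1-\lambda) h_{C_K}$: whenever a unit vector $u$ is simultaneously a contact normal for the inball of $K$ at some $q_K$ and for the inball of $C_K$ at some $q_{C_K}$, the point $\lambda q_K + (1-\lambda) q_{C_K}$ is automatically a valid contact point of $c^* + \rho^*\B$ in $K_\lambda$ with the same outer normal $u$. The upper bound therefore reduces to extracting, out of the Proposition~\ref{prop:CONT}(b) certificates of $K$ and $C_K$, a common family $\{u^i\}$ of normals with $0 \in \conv\{u^i\}$. Here I plan to exploit the hypothesis $w(K) = r(K) + R(K)$ together with the constant-width structure of $C_K$ (whose inball and circumball are concentric); this is precisely the geometric content of the equality characterization of \eqref{eq:2} developed in \cite{BrK2}, and I expect it to be the technical heart of the argument.

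Once this upper bound is in hand, each of the three components of $f(K_\lambda)$ is affine in $\lambda$, yielding $f(K_\lambda) = \lambda f(K) + (1-\lambda) f(C_K)$. Summing then gives
\begin{align*}
r(K_\lambda) + R(K_\lambda) &= \bigl[\lambda r(K) + (1-\lambda) r(C_K)\bigr] + R(K) \\
&= \lambda\bigl(r(K)+R(K)\bigr) + (1-\lambda)\bigl(r(C_K)+R(C_K)\bigr) \\
&= \lambda w(K) + (1-\lambda) w(C_K) = w(K_\lambda),
\end{align*}
which finishes the proof.
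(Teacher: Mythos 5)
Your overall architecture matches the paper's: reduce everything to the affinity of $r(K_\lambda)$ in $\lambda$ (the $D$, $w$, $R$ components being handled exactly as you do, via Lemma \ref{lem:Completion} and the sandwich $K \subseteq K_\lambda \subseteq C_K$), and certify the candidate inball $c^*+\rho^*\B$ of $K_\lambda$ through Proposition \ref{prop:CONT}(b) by exhibiting a \emph{common} family of contact normals $u^1,\dots,u^j$ with $0\in\conv\{u^i\}$ valid for the inballs of both $K$ and $C_K$. Your reduction up to that point is correct, and the lower bound via superadditivity of $r$ is fine (though strictly redundant once Proposition \ref{prop:CONT}(b) is invoked, since that criterion is an equivalence).

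The genuine gap is that the crux — extracting the common normal family — is only announced (\enquote{I plan to exploit\dots}, \enquote{I expect it to be the technical heart}) and never carried out; deferring to \cite{BrK2} does not supply the argument. The paper fills this as follows. From $w(K)=r(K)+R(K)$ one gets (citing \cite{Br}) that the inball of $K$ is unique and concentric with the circumball $\B$, so $r(K)\B\subset K\subset\B$ with common center $0$. For any $s\in\S$ with $-r(K)s\in\bd(K)$, the unique supporting line at $-r(K)s$ must also support $r(K)\B$, hence equals $-r(K)s+\lin\{s\}^{\bot}$; the breadth of $K$ in direction $s$ is therefore at most $r(K)+R(K)=w(K)$, with equality iff $s\in K$. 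Taking $u^1,\dots,u^j$ from Proposition \ref{prop:CONT}(b) applied to $K$, this forces $u^i\in K\cap\S\subset C_K\cap\S$. Since $C_K$ has constant width $w(C_K)=r(C_K)+R(C_K)$ with concentric inball and circumball, the presence of $u^i\in C_K\cap\S$ forces the opposite supporting hyperplane $-r(C_K)u^i+\lin\{u^i\}^{\bot}$ to support $C_K$ \emph{and} its inball. Thus the same $u^i$ serve as contact normals for both inballs, the Minkowski combination inherits them, and Proposition \ref{prop:CONT}(b) yields $r(K_\lambda)=\lambda r(K)+(1-\lambda)r(C_K)$. Without this concentricity-and-antipodal-contact argument your proof does not close.
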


\begin{proof}
  Using Lemma \ref{lem:Completion} it immediately follows that
  $D(K_{\lambda})=D(K)$, $R(K_\lambda)=R(K)$, and $w(K_{\lambda})=\lambda w(K) + (1-\lambda)w(C_K)$
  for all $\lambda\in[0,1]$.

  In \cite{Br} it was shown that $w(K)=r(K)+R(K)$ implies that $K$
  has a unique inball being concentric with the circumball.

  Hence, assuming as always that $\B$ is the circumball of $K$, we have $r(K)\B\subset K \subset \B$
  with $R(K)=1$.

  Observe that if $s\in\S$ with $-r(K)us\in\bd(K)$, then $s\in K$.
  This follows because the (unique) supporting hyperplane
  in $-r(K)s$ of $K$ has to support $r(K)\B$ too, and therefore
  this hyperplane has to be $-r(K)s+\lin\{s\}^{\bot}$. Thus
  the breadth of $K$ in the direction $s$ is at most $r(K)+R(K)$,
  with equality iff $s\in K$.

  Using Proposition \ref{prop:CONT}
  there exist $u^1,\dots,u^j\in\S$, $2\le j\le n+1$,
  \st~$-r(K)u^i\in\bd(K)$, $i \in [j]$ with $0\in\conv\{u^1,\dots,u^j\}$, which together with
  the observation above yields that
  $u^i\in K\cap\S$ and that the hyperplanes
  $-r(K)u^i+\lin\{u^i\}^{\bot}$ support $K$ in $-r(K)u^i$, $i \in [j]$.

  Now, since $C_K$ is a Scott-completion of $K$ it obviously holds
  $r(C_K)\B \subset C_K\subset \B$, $u^i\in C_K$, and since $w(C_K)=r(C_K)+R(C_K)$ also
  that the hyperplanes $-r(C_K)u^i + \lin\{u^i\}^{\bot}$
  support $C_K$ and its inball in $-r(C_K)u^i$, $i \in [j]$.

  Altogether we obtain $-r(K)u^i+\lin\{u^i\}^{\bot}$ and
  $-r(C_K)u^i+\lin\{u^i\}^{\bot}$ support $K$ and $C_K$
  in the points $-r(K)u^i$ and $-r(C_K)u^i$, respectively. Hence
  the hyperplanes $-(\lambda r(K)+(1-\lambda)r(C_K))u^i+\lin\{u^i\}^{\bot}$
  support $\lambda K+(1-\lambda)C_K$ in the points $-(\lambda r(K)+(1-\lambda)r(C_K))u^i$,
  $i \in [j]$. Using again Proposition \ref{prop:CONT}, it follows
  $r(\lambda K+(1-\lambda)C_K)=\lambda r(K)+(1-\lambda)r(C_K)$ and from this the rest of the statement
  in the lemma.
\end{proof}

As mentioned in Section \ref{s:intro} the inequalities
\begin{equation}\label{eq:rdR diagram}
  \begin{split}
    &D(K) \le 2R(K), \quad D(K) \ge r(K) + R(K), \quad D(K) \ge \sqrt{3} R(K), \quad \text{and}\\
    &2R(K)\left(2R(K)+\sqrt{4R(K)^2-D(K)^2}\right)r(K) \ge D(K)^2\sqrt{4R(K)^2-D(K)^2},
  \end{split}
\end{equation}
give a full description of \[ g:\CK^2 \to [0,1]^2, \text{ with }
g(K):=\left(\frac{r(K)}{R(K)},\frac{D(K)}{2R(K)}\right)\] (see Figure \ref{fig:2dimdiagram}).

\begin{figure}
  \begin{center}
    \includegraphics[trim = 0cm 9.5cm 0cm 8cm, width=14cm]{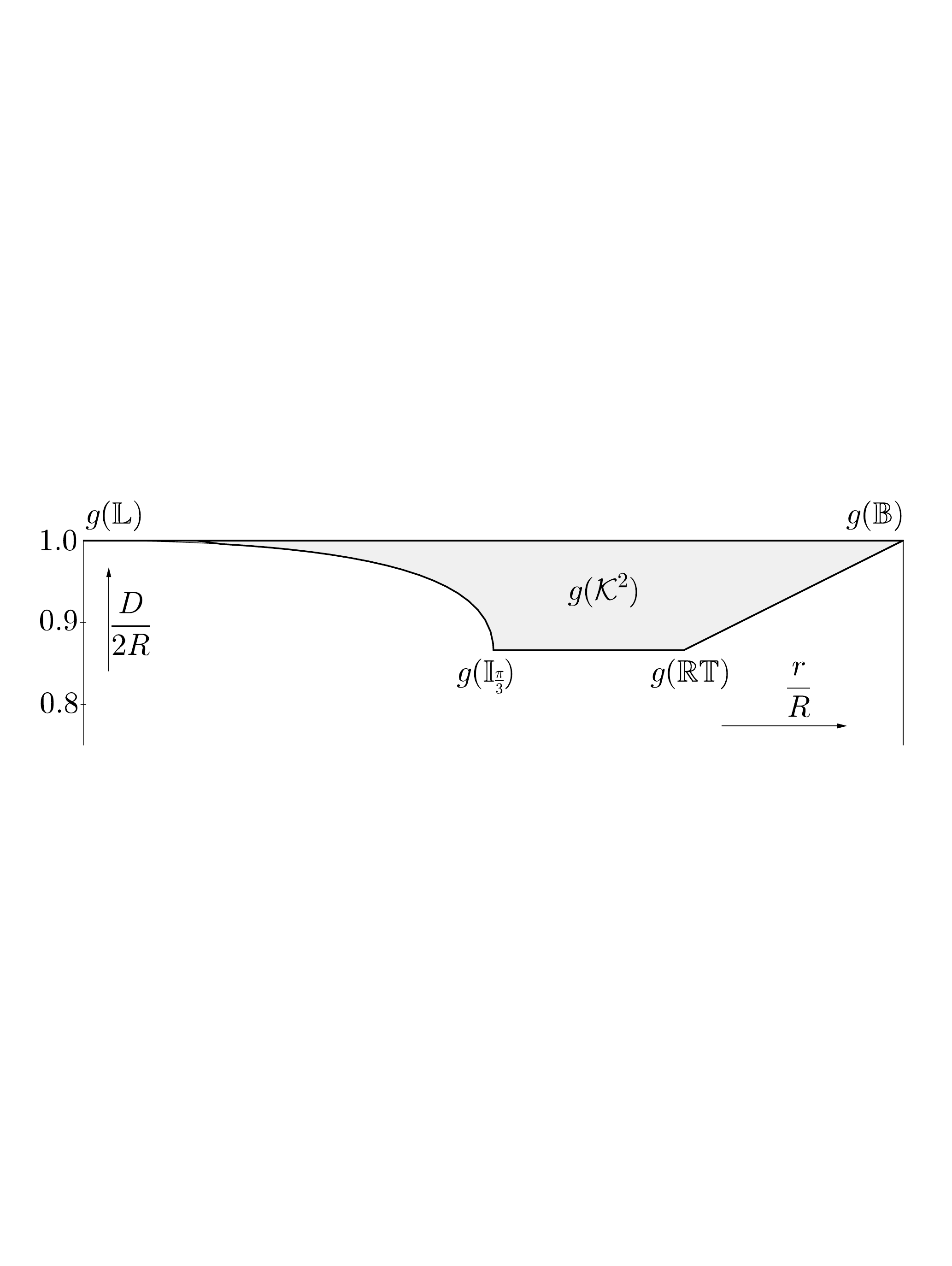}
    \caption{The diagram $g(\CK^2)$ with $x$-axis $\nicefrac{r}{R}$ and $y$-axis
      $\nicefrac{D}{2R}$. The boundaries are given via the inequalities
      collected in \eqref{eq:rdR diagram}.
      The vertices are the euclidean ball $\B$,
      the line segment $\L$, the equilateral triangle $\EqT$ and the Reuleaux triangle $\ReT$
      (see Subsection \ref{ss:vertices} for their explanation).}\label{fig:2dimdiagram}
  \end{center}
\end{figure}

\begin{figure}
\begin{center}
\includegraphics[trim = 0mm 11.5cm 0mm 11.5cm, width=16cm]{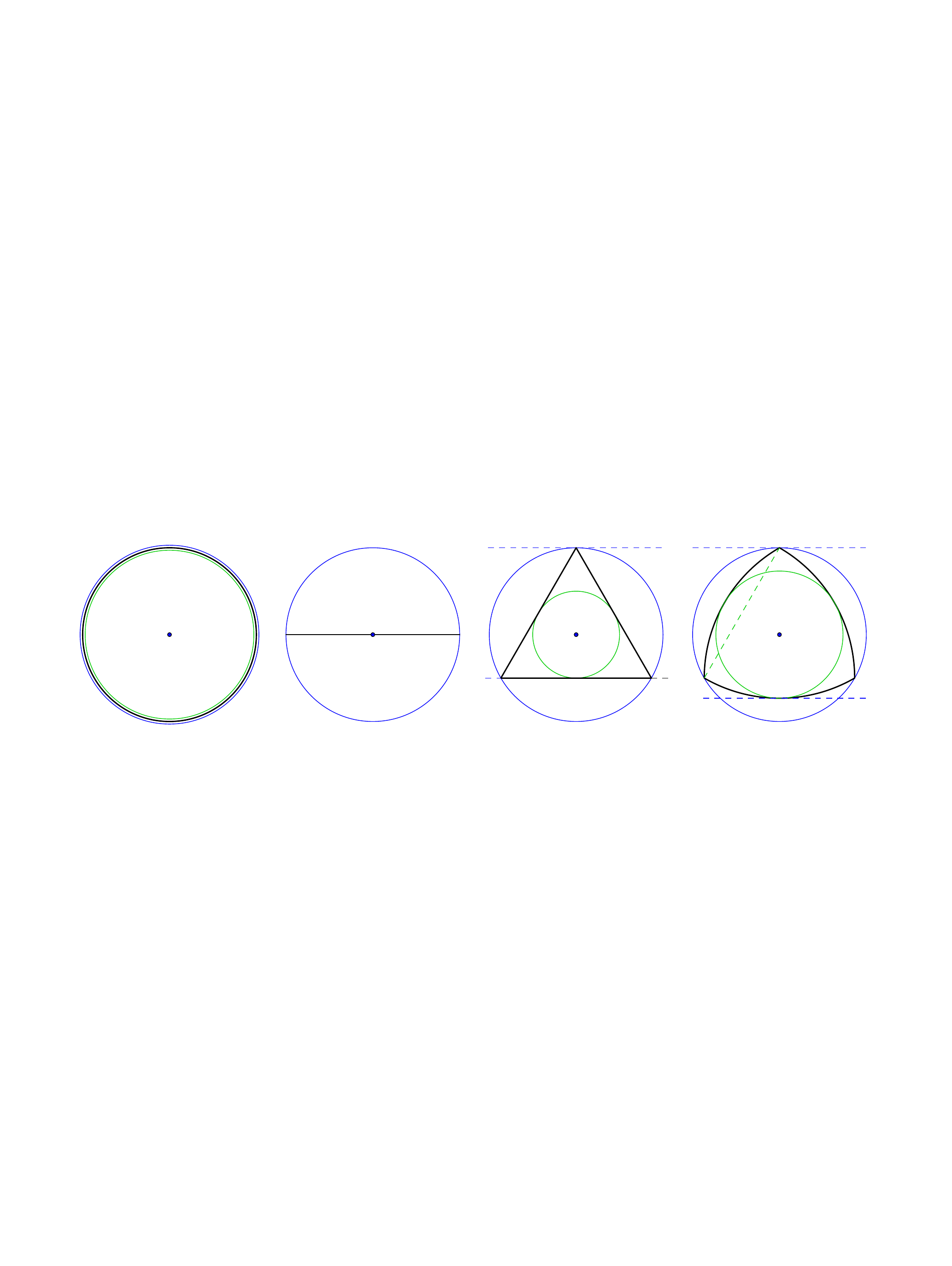}
\caption{From left to right: the euclidean ball $\B$, the line $\L$,
  the equilateral triangle $\EqT$, and the Reuleaux triangle $\ReT$.
  Here and in all the forthcoming figures, the inballs are drawn
    in green, the circumballs in blue, the diameters in dashed green, and the widths in dashed blue.}
  \label{fig:fourVertices}
\end{center}
\end{figure}

Since $g(\CK^2)$ is just the projection of $f(\CK^2)$ onto the first and last coordinate,
we may consider any valid pair of values $(r,D) \in g(\CK^2)$
and solve
\begin{equation*}
  \begin{array}{crcl}
    \max_{K\in\CK^2} & w(K) \\
    & r(K) & = & r \\
    & D(K) & = & D \\
    & R(K) & = & 1
  \end{array}
  \qquad \text \quad \text{as well as} \quad \text \qquad
  \begin{array}{crcl}
    \min_{K\in\CK^2} & w(K) \\
    & r(K) & = & r \\
    & D(K) & = & D \\
    & R(K) & = & 1 \ .
  \end{array}
\end{equation*}
Calling the solution of the maximization problem $w^*(r,D)$ for any
given pair $(r,D)$ and the solution of the minimization problem $w_*(r,D)$,
the family $\{ w^*(r,D) : (r,D) \in g(\CK^2) \}$ describes
the upper boundary of $f(\CK^2)$ and $\{w_*(r,D) : (r,D) \in g(\CK^2)\}$
describes the lower boundary of $f(\CK^2)$.
    To complete the full
diagram it then suffices to check which of the inequalities in
\eqref{eq:rdR diagram} still describe facets of $f(\CK^2)$ (\ie there exists a
pair $(r,D) \in \bd(g(\CK^2))$, \st~the corresponding inequality is
fulfilled with equality and $w^*(r,D) \neq w_*(r,D)$) and which describe only
edges (which is the case if $w^*(r,D) = w_*(r,D)$ for all $(r,D) \in g(\CK^2)$
fulfilling the inequality with equality).

\section{Main inequalities} \label{s:main ineq}

In this section we describe nine valid inequalities.
Three of them are of the form $w \le w^*(r,D)$, thus describing the upper boundary of the diagram;
we call them $(ub_j)$, $j=1,2,3$. Analogously,
we write $(lb_j)$, $j=1,2,3$ for the three inequalities $w \ge w_*(r,D)$ (giving the lower boundary)
and $(ib_j)$, $j=1,2,3$ for the inequalities which are independent of $w$.

We start with those inequalities which are a-priori known:
\begin{prop}\label{prop:1}
  Let $K\in\CK^2$. Then
  \begin{align}
    \tag{$lb_1$}  \label{lb_1} 2r(K) &\le w(K) \\
    \tag{$ib_1$}  \label{ib_1} D(K) &\le 2R(K) \\
    \tag{$ub_1$}  \label{ub_1} w(K) &\le R(K)+r(K) \\
    \tag{$ib_2$}  \label{ib_2} R(K) + r(K) &\le D(K)\\
    \tag{$ib_3$}  \label{ib_3} \sqrt{3}R(K) &\le D(K) \\
    \tag{$lb_2$}  \label{lb_2} (4R(K)^2-D(K)^2)D(K)^4 &\le  4w(K)^2R(K)^4
  \end{align}
\end{prop}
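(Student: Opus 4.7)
All six inequalities are classical and have been announced with references in Section~\ref{s:intro}, so the plan is to assemble the shortest argument or citation for each rather than to reprove them ab initio. The two inequalities \eqref{lb_1} and \eqref{ib_1} are the outer links of the chain \eqref{eq:1}: for \eqref{lb_1}, the inball $c+r(K)\B\subset K$ forces every pair of parallel supporting hyperplanes of $K$ to be at distance at least $2r(K)$, so the minimum breadth is $\ge 2r(K)$; for \eqref{ib_1}, $K$ lies in the circumball of radius $R(K)$, so every pair of its points is at distance at most $2R(K)$. Both are one-liners from the definitions.

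The inequalities \eqref{ub_1} and \eqref{ib_2} together form the chain \eqref{eq:2}, proved in the plane by Santal\'o \cite{Sa} and extended to arbitrary dimension in \cite{Br,BrK2}; I would just invoke these references. Inequality \eqref{ib_3} is Jung's inequality \eqref{eq:6} specialized to $n=2$: substituting $n=2$ into the factor $\sqrt{n/(2(n+1))}$ gives $1/\sqrt{3}$, which rearranges to $\sqrt{3}\,R(K)\le D(K)$.

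The remaining inequality \eqref{lb_2} is the only genuinely substantial one; it is the first of the inequalities in \eqref{eq:3} and was proved by Hern\'andez Cifre and Segura Gomis \cite{HC,HCS}. I would cite their proof and, for orientation, recall its structure: after normalizing to $R(K)=1$, fix a diameter $[p^1,p^2]$ of $K$ (whose endpoints are exposed by Proposition \ref{prop:exposed}(a)); combining the constraint $K\subset\B$ with the presence of this chord, one bounds below the breadth of $K$ in the direction perpendicular to the chord and extracts the announced inequality $w(K)\ge D(K)^2\sqrt{4R(K)^2-D(K)^2}/(2R(K)^2)$. The equilateral triangle, for which $D(K)=\sqrt{3}\,R(K)$ and $w(K)=\tfrac{3}{2}R(K)$, is an extremizer, which both confirms sharpness and explains why \eqref{lb_2} describes a facet of the diagram rather than an edge.

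The only step requiring real work is \eqref{lb_2}; the other five are either immediate consequences of the definitions or verbatim references. The difficulty in \eqref{lb_2} lies not in any algebraic manipulation but in identifying the extremal family that minimizes $w$ under prescribed $(D,R)$ and ruling out thinner candidates, which is precisely the content of \cite{HC,HCS}.
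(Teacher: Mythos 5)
Your proposal matches the paper exactly in spirit: the paper gives no proof of Proposition \ref{prop:1} at all, presenting all six inequalities as a-priori known results, with \eqref{lb_1} and \eqref{ib_1} coming from \eqref{eq:1}, \eqref{ub_1} and \eqref{ib_2} from \eqref{eq:2} via \cite{Sa,Br,BrK2}, \eqref{ib_3} from Jung's inequality \eqref{eq:6} with $n=2$, and \eqref{lb_2} from \cite{HC,HCS}. Your additional one-line justifications for the elementary cases and the correct algebraic check that the equilateral triangle attains equality in \eqref{lb_2} are consistent with, and slightly more explicit than, what the paper records.
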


The remaining three inequalities for a complete description of $f(\CK^2)$ are new.
Surely, each of them involves all four radii $r,w,D$, and $R$ simultaneously
as otherwise it would have been neccessary for the description of
the according 2-dimensional Blaschke-Santal\'o diagram.

\begin{thm} \label{thm:3}
  Let $K\in\CK^2$. Then
     \begin{equation}\tag{$lb_3$}\label{eq:3PT}
     \begin{split}
    w(K) & \ge 2D(K) \sqrt{1-\left(\frac{D(K)}{2R(K)}\right)^2} \\
    & \cos\left[\arccos\left(\frac{D(K)}{2(D(K)-r(K))}\right) +
          \arccos\left(\frac{D(K)}{2R(K)}\right)
    -\arcsin\left(\frac{r(K)}{D(K)-r(K)}\right)\right]
    \end{split}
    \end{equation}
\end{thm}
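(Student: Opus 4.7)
The plan is to identify the extremal body $K^* \in \CK^2$ minimising $w(K)$ subject to $r(K) = r$, $D(K) = D$, $R(K) = R$, and then compute $w(K^*)$ by planar trigonometry. Scaling to $R = 1$ so that $\B$ is the circumball of $K^*$, and guided by the three angular terms inside the cosine in \eqref{eq:3PT}, I expect $K^*$ to be bounded by three circular arcs: a piece of $\S$ on one side of a diametral chord, and two arcs of radius $D$ meeting at the diametral endpoints on the other side, with the incircle tangent to the two radius-$D$ arcs.

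First I would fix a diametral pair $p^1, p^2 \in K^* \cap \S$ with $|p^1-p^2| = D$, whose existence follows by combining Proposition \ref{prop:exposed}a) with Proposition \ref{prop:CONT}a). On one side of $\aff\{p^1, p^2\}$ a further point $p^3 \in K^* \cap \S$ is required in order that $0 \in \conv(K^* \cap \S)$ (the case $D = 2R$ being handled separately), and by maximality one may enlarge $K^*$ to contain the whole arc of $\S$ between $p^1$ and $p^2$ on that side without increasing $w$. On the other side the constraint $D(K^*) = D$ confines $K^*$ to $(p^1 + D\B) \cap (p^2 + D\B)$, and we may again assume the two bounding arcs of radius $D$ are part of $\bd K^*$. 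Imposing $r(K^*) = r$ then forces the incircle $c + r\B$ to be tangent to both radius-$D$ arcs, which places $c$ on the perpendicular bisector of $[p^1, p^2]$ at distance exactly $D - r$ from each of $p^1, p^2$.

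With $K^*$ thus pinned down, the width is realised between two parallel supporting lines whose common normal is characterised, via Proposition \ref{prop:exposed}b), by a perpendicular segment joining the $\S$-arc to one of the radius-$D$ arcs (or the incircle). Measured from $[p^1, p^2]$, the three angles in \eqref{eq:3PT} have direct geometric meaning: $\arccos(D/(2R))$ equals $\angle 0 p^1 p^2$ in the isosceles triangle $\conv\{0, p^1, p^2\}$; $\arccos(D/(2(D-r)))$ equals $\angle c p^1 p^2$ in the isosceles triangle $\conv\{c, p^1, p^2\}$ with legs of length $D - r$; and $\arcsin(r/(D-r))$ is the angle at $p^1$ in the right triangle with hypotenuse $[p^1, c]$ and the perpendicular of length $r$ dropped from $c$ to the radius-$D$ arc through $p^1$. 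Summing these three angles with appropriate signs gives the direction of the width chord; the inscribed angle theorem (Proposition \ref{lem:angle}) can be invoked to convert between inscribed and central angles where needed, and the prefactor $2D\sqrt{1-(D/(2R))^2}$ then emerges as the length of the relevant chord of $\B$.

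The main obstacle I expect is the extremality step: rigorously arguing that no body with the same three radii attains a smaller width than $K^*$. This will likely require a case analysis ruling out competing configurations in which the incircle touches $\S$ directly rather than the radius-$D$ arcs, or in which more than the generic three essential contacts appear. A deformation argument preserving $r, D, R$ while monitoring $w$, possibly combined with the completion-type reductions of Lemmas \ref{lem:Completion} and \ref{lem:CompletionsW=r+R} for the boundary cases $D = 2R$ and $w = r + R$, should close the gap. Once the extremal body is fixed, the trigonometric computation itself is routine.
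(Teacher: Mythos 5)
Your overall strategy --- guess the body minimising $w$ for fixed $(r,D,R)$, compute its width trigonometrically, and establish extremality by a deformation argument --- is the same as the paper's, and you correctly place the incenter on the perpendicular bisector of a diametral chord at distance $D-r$ from its endpoints, tangent to the two arcs of radius $D$. But the extremal body you propose is not the right one, and the step that produces it is false. You cannot ``enlarge $K^*$ to contain the whole arc of $\S$ between $p^1$ and $p^2$'' on the side of $p^3$: since $0$ lies on that side, the arc contains $-p^1$, which is at distance $2R>D$ from $p^1$, so the enlargement destroys the constraint $D(K^*)=D$. The actual minimizers (the bent isosceles $\BIso$ and bent pentagons $\BPen$ of Subsection \ref{ss:facets}) are built on an isosceles triangle $\conv\{p^1,p^2,p^3\}$ inscribed in $\S$ with \emph{two} diametral edges, $\norm[p^1-p^2]=\norm[p^1-p^3]=D$; no arc of $\S$ appears in their boundary, and the width is attained between a line through $p^2$ tangent to the inball and the parallel line through $p^3$. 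That is exactly where the three angles in \eqref{eq:3PT} come from: $\arccos(D/(2R))$ is half the apex angle at $p^1$, the prefactor $2D\sqrt{1-(D/(2R))^2}$ is the length of the chord $[p^2,p^3]$, and $\arcsin(r/(D-r))$ is the angle at $p^2$ between $[p^2,c]$ and the tangent line. Your configuration, with a single diametral chord, a cap of $\B$, and a lens of two radius-$D$ arcs, does not produce this width direction and cannot yield the stated formula.

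There is a second, independent gap: the right-hand side of \eqref{eq:3PT} is \emph{not attained} for all admissible triples $(r,D,R)$. Bent pentagons exist only when $8r\ge 3D(\Iso)$, $\gamma\ge\gamma_r$, and $r\le r(\H)$ (Lemma \ref{lem:lb3properties}); outside this range the true minimum of $w$ is governed by \eqref{lb_1} or \eqref{lb_2} instead, and one must separately verify that the claimed bound still lies below it. The paper does this in cases (ii)--(iv) of the proof of Theorem \ref{thm:3}, comparing $d(L_1,L_2)$ of a merely \emph{generalized} bent pentagon with $w(\Iso)$, with $2r$, and with $w(\H)$, respectively. A ``find the minimizer and compute'' proof therefore cannot work globally. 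Finally, the extremality step you defer to ``a deformation argument'' is the real content of the proof: the paper needs Corollary \ref{cor:min width}, the case analysis of Remark \ref{rem:min width}, and the three-stage transformation of $\conv(T',c+r(K)\B)$ into a bent isosceles in Lemma \ref{lem:3 Bent Isosceles}, each stage with its own monotonicity argument for $d(L_1,L_2)$. As written, your proposal neither identifies the correct extremal body nor supplies the argument that nothing does better.
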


\begin{rem}
An algebraic representation of \eqref{eq:3PT}
can easily be calculated using a computer algebra tool and looks like the following:
\begin{equation*}
\begin{split}
w(K) &\ge 2D(K) \sqrt{1-\left(\frac{D(K)}{2R(K)}\right)^2} \left[ \sqrt{1-\frac{r(K)^2}{(D(K)-r(K))^2}} \right.\\
&      \left(\frac{D(K)^2}{4R(K)(D(K)-r(K))}-\sqrt{\left(1-\frac{D(K)^2}{4(D(K)-r(K))^2}\right)\left(1-\frac{D(K)^2}{4R(K)^2}\right)}\right)\\
&      +\left. \frac{r(K)}{D(K)-r(K)} \left(\frac{D(K)}{2R(K)}\sqrt{1-\frac{D(K)^2}{4(D(K)-r(K))^2}}-\frac{D(K)}{2(D(K)-r(K))}\sqrt{1-\frac{D(K)^2}{4R(K)^2}}\right) \right]
\end{split}
\end{equation*}
\end{rem}

\begin{thm}\label{thm:2}
  Let $K\in\CK^2$. Then
  \begin{equation} \tag{$ub_2$} \label{eq:sailing boats}
    w(K) \le r(K) \left( 1 + \frac{2\sqrt{2}R(K)}{D(K)}
      \sqrt{1 + \sqrt{1-\left(\frac{D(K)}{2R(K)}\right)^2}}\right).
  \end{equation}
\end{thm}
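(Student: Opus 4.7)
\emph{Setup and simplification.} By Lemma \ref{lem:starshaped} we may assume $R(K)=1$ and that $\B$ is the circumball of $K$. Writing $\alpha=\arcsin(D(K)/2)\in[\pi/3,\pi/2]$ (the bounds coming from $D\le 2R$ and Jung's inequality \eqref{eq:6}) and $\beta=\alpha/2$, the identities $\sin\alpha=2\sin\beta\cos\beta$ and $\sqrt{1+\cos\alpha}=\sqrt{2}\cos\beta$ recast \eqref{eq:sailing boats} as the transparent form
\[w(K)\le r(K)\bigl(1+\tfrac{1}{\sin\beta}\bigr),\qquad(\star)\]
and one checks directly that the inscribed isosceles triangle $T_\alpha=\conv\{(\pm\sin\alpha,-\cos\alpha),(0,1)\}$, whose apex angle equals $\alpha$ by Proposition \ref{lem:angle}, satisfies $r(T_\alpha)=2\sin\beta\cos^2\beta/(1+\sin\beta)$ and has width $1+\cos\alpha=r(T_\alpha)(1+1/\sin\beta)$. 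Thus $T_\alpha$ realizes equality in $(\star)$ and serves as the candidate extremal body.

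\emph{Plan via inscribed angle.} Let $[u,v]\subset K$ be a diametral pair, exposed by Proposition \ref{prop:exposed}(a). By Proposition \ref{prop:CONT}(a) the set $K\cap\S$ contains points whose convex hull contains $0$; excluding the degenerate case $D=2R$, the line $\aff\{u,v\}$ misses $0$, so at least one such circumball contact, call it $p$, lies on the same side of $\aff\{u,v\}$ as $0$. Extending the inscribed-angle theorem (Proposition \ref{lem:angle}) to allow $u,v\in\B$ rather than only on $\S$---established by a monotonicity argument as $u,v$ slide symmetrically along the chord of $\S$ that they span, whereby $\angle upv$ is seen to attain its minimum $\alpha$ exactly when $u,v\in\S$---gives $\angle upv\ge\alpha$. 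Since $[p,u]\cup[p,v]\subset K$ by convexity, the interior angle $\theta_p$ of $K$ at $p$ satisfies $\theta_p\ge\alpha=2\beta$; and because the inball $c+r\B\subset K$ must fit inside the tangent cone of $K$ at $p$, one obtains $|p-c|\le r/\sin(\theta_p/2)\le r/\sin\beta$.

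\emph{Main obstacle.} The local bound $|p-c|\le r/\sin\beta$ produces the global bound $(\star)$ only if the width of $K$ is achieved in a direction aligned with $p-c$, with $K$ tangent to the inball on the opposite side. To close this gap I plan a case analysis based on Proposition \ref{prop:CONT}(b): if the inball has two antipodal contacts, then $w(K)=2r(K)\le r(K)(1+1/\sin\beta)$ is immediate since $\sin\beta\le 1$; in the three-contact case, one must match one of the three inball-tangent directions (the outer normals $u^1,u^2,u^3$ from Proposition \ref{prop:CONT}(b)) with the circumball contact $p$ lying on the opposite side. The principal difficulty is to show that for \emph{some} choice of $p$ and some $u^i$ pointing opposite to $p-c$ the breadth in the resulting direction is at most $r(1+1/\sin\beta)$; a clean route would be a continuous-deformation argument reducing $K$ to $T_\alpha$ while preserving $R,D,r$ and monotonically increasing $w$, so that the inequality for $K$ follows from equality at $T_\alpha$.
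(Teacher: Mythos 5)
There is a genuine gap, and you essentially name it yourself in your last paragraph. The reformulation $(\star)$ and the identification of the isosceles triangles (and, for larger $r$, the sailing boats) as the equality cases are correct, but the whole difficulty of the theorem lies precisely in the step you defer: passing from local information at one circumball contact $p$ to a bound on the \emph{width}, i.e.\ exhibiting a single direction in which the breadth of $K$ is at most $r(1+1/\sin\beta)$. Your bound $\|p-c\|\le r/\sin\beta$ controls only one point of $K$; the breadth of $K$ in the direction $-u^i$ is $r+\max_{x\in K}\langle x-c,-u^i\rangle$, and the maximizer need not be a circumball contact at all, so matching some $u^i$ against $p$ does not close the argument. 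The ``continuous-deformation argument reducing $K$ to $T_\alpha$ while preserving $R,D,r$ and not decreasing $w$'' that you propose as a ``clean route'' is exactly what the paper does in Lemma \ref{lem:triangles and sail boats}, and it is the technical heart of the proof: a five-stage transformation of the set $C=T\cap\B$ (built from the inball-normals $u^i$ and a carefully chosen diametral chord $[q^1,q^2]$ supplied by Lemmas \ref{lem:max width} and \ref{lem:max width 2}) into a triangle or sailing boat. Nothing in your proposal substitutes for that work. Moreover, the deformation only covers the range $r(\Iso[2\arccos(D/2)])\le r\le r(\CSB[\arcsin(D/2)])$; the paper needs two further arguments for $r$ below this range (comparison with the rescaled isosceles $I_K$) and above it (where $w=r+R$ via the $(ub_1)$ facet), and your proposal does not address either regime.

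A secondary but real flaw: your extension of the inscribed angle theorem is not established by the monotonicity you describe. Sliding a segment of length $D$ \emph{symmetrically} along the chord it spans only treats symmetric placements; for asymmetric placements of $u,v\in\B$ with $\|u-v\|=D$ and $p\in\S$ on the same side as $0$, the angle $\angle upv$ can drop strictly below $\arcsin(D/2)$ (take $D$ close to $2$, the segment pushed to one end of a chord near the centre, and $p$ near the far end of that chord). The statement is nevertheless salvageable, but for a different reason: since $u,v,p\in K$ one also has $\|p-u\|,\|p-v\|\le D(K)$, so $[u,v]$ is a longest side of the triangle $uvp$ and the angle at $p$ is its largest angle; if that angle is at most $\pi/2$ the triangle is non-obtuse, its circumradius is then at most $R(\B)=1$ because the circumball of a non-obtuse triangle is its smallest enclosing ball, and the law of sines gives $\sin(\angle upv)=D/(2R')\ge D/2$. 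As written, your justification would not survive refereeing even for this preliminary step.
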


\begin{rem} \label{rem:steinhagen}
One may recognize that
\[1 + \frac{(2\sqrt{2}R(K))}{D(K)} \sqrt{1 + \sqrt{1-\left(\frac{D(K)}{2R(K)}\right)^2}} \leq 3,\]
which shows that \eqref{eq:sailing boats} is a direct strengthening of the 2-dimensional version
of Steinhagen's inequality (\cf~\cite{St21}). However, this is not the case for \eqref{eq:triangles}
(even so containing $\L$ and $\EqT$, the two sets fulfilling Steinhagen's inequality with equality) as,
\eg~evaluating
\eqref{eq:triangles} at $\B$ gives a value obviously bigger than 3.

It is also quite easy to see that for a pendant of our diagram for higher dimensional sets Steinhagen's inequality
induces a facet.
\end{rem}

\begin{thm} \label{thm:4}
Let $K\in\CK^2$. Then
  \begin{equation} \tag{$ub_3$} \label{eq:triangles}
    w(K) \le 2r(K) \left(1 + \frac{2r(K)R(K)}{D(K)^2}
      \left(1 + \sqrt{1-\left(\frac{D(K)}{2R(K)}\right)^2}\right)
      \right)
  \end{equation}
\end{thm}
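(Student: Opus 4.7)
The plan is to recognize \eqref{eq:triangles} as the inequality whose extremal cases are the acute and right triangles inscribed in the circumball with the diameter as their longest side, and then to compare general $K$ against such a triangle obtained from its inball.

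I would first rewrite \eqref{eq:triangles} via a routine algebraic rationalization as the equivalent
\[ (w(K)-2r(K))\bigl(R(K)-\sqrt{R(K)^2-D(K)^2/4}\bigr)\le r(K)^2, \]
using $(R-\sqrt{R^2-D^2/4})(R+\sqrt{R^2-D^2/4})=D^2/4$. In this form, the second factor on the left is the shorter sagitta of a chord of length $D$ in a circle of radius $R$, exposing the underlying geometry. By Lemma~\ref{lem:starshaped} I may assume $R(K)=1$ with $\B$ the circumball of $K$ centered at the origin, and hence $K\subseteq\B$. Applying Proposition~\ref{prop:CONT}b to the inball $c+r\B$, I would split into cases by the number $\ell\in\{2,3\}$ of tangent points. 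If $\ell=2$, the two outward normals must be antipodal on the inball (forced by $0\in\conv\{u^1,u^2\}$), so the tangent supporting lines are parallel and $K$ lies in a strip of width $2r$; hence $w(K)=2r(K)$ and the inequality holds trivially.

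In the main case $\ell=3$, the three tangent lines bound a triangle $T\supseteq K$ whose incircle coincides with the inball of $K$, so $r(T)=r$. For any acute or right triangle $T$ one derives the exact identity
\[ (w(T)-2r(T))\bigl(R(T)-\sqrt{R(T)^2-D(T)^2/4}\bigr)=r(T)^2 \]
by a short trigonometric computation combining $w(T)=2\,\mathrm{Area}(T)/D(T)$ (the width of a triangle is the altitude to its longest side), $\mathrm{Area}(T)=r\,s(T)$, the tangent half-angle identity $r=(s(T)-D(T))\tan(A/2)$ with $A$ the angle opposite $D(T)$, and the law of sines $D(T)=2R(T)\sin A$. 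In the obtuse case, where $R(T)=D(T)/2$, the identity specializes cleanly to $w(T)-2r=2r^2/D(T)$. Combining $w(K)\le w(T)$ (from $K\subseteq T$) with this identity yields $(w(K)-2r)\bigl(R(T)-\sqrt{R(T)^2-D(T)^2/4}\bigr)\le r^2$.

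The conclusion then reduces to the \emph{sagitta monotonicity}
\[ R(T)-\sqrt{R(T)^2-D(T)^2/4}\ \ge\ R(K)-\sqrt{R(K)^2-D(K)^2/4}, \]
which is the technical heart of the proof. When $T$ is obtuse the inequality is immediate, since the left side equals $D(T)/2\ge D(K)/2\ge 1-\sqrt{1-D(K)^2/4}$; in the acute/right case the mixed monotonicity of the sagitta function (decreasing in $R$, increasing in $D$) prevents a direct comparison from containment alone. To handle this case I would use the diametral pair $[p,q]\subseteq K\subseteq T$ (exposed by Proposition~\ref{prop:exposed}a) together with Proposition~\ref{lem:angle}: applying the inscribed angle theorem in the circumcircles of both $T$ and $\B$ and comparing the central angles subtended by $[p,q]$ should yield the desired sagitta comparison. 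This final quantitative geometric step is where the main difficulty lies, and a careful case analysis on the relative position of the diametral chord, the inball, and the vertices of $T$ with respect to $\B$ will be needed to close the proof.
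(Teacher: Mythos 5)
Your reduction is clean up to its last step: the rationalized form $(w-2r)\bigl(R-\sqrt{R^2-D^2/4}\bigr)\le r^2$ is indeed equivalent to \eqref{eq:triangles}, the case $l=2$ of Proposition \ref{prop:CONT} does force $w(K)=2r(K)$, and the identity $(w(T)-2r(T))\bigl(R(T)-\sqrt{R(T)^2-D(T)^2/4}\bigr)=r(T)^2$ for non-obtuse triangles checks out (for obtuse $T$ it degrades to ``$\le$'', which still suffices). The genuine gap is the ``sagitta monotonicity'' you defer to the end, and it is not merely technically hard --- as stated it is false. Writing $\sigma(R,D)=R-\sqrt{R^2-D^2/4}$, one has $\partial\sigma/\partial R=1-R/\sqrt{R^2-D^2/4}<0$, so $\sigma$ is strictly \emph{decreasing} in $R$. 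Containment only gives $R(T)\ge R(K)$ and $D(T)\ge D(K)$, and in the sub-case $D(T)=D(K)$ (which occurs exactly when the two endpoints of the longest edge of $T$ belong to $K$) one gets $\sigma(R(T),D(T))=\sigma(R(T),D(K))<\sigma(R(K),D(K))$ whenever $R(T)>R(K)$ --- the opposite of what you need. Your own squaring computation confirms this: in the worst case $\sin\alpha=R(T)\sin A_T$ the desired inequality reduces exactly to $R(T)\le R(K)$. Such configurations do occur with $l=3$ genuinely required (for instance $K$ the convex hull of the inball $B_0$ of an equilateral triangle $T$, two vertices of $T$, and one extra point just above $B_0$ that blocks the antipodal normal: then the equilateral triple of normals is admissible in Proposition \ref{prop:CONT}, $D(T)=D(K)$, and $R(T)=2r>R(K)$). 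Since Proposition \ref{prop:CONT} does not single out a canonical triple of normals, you would at least have to prove that \emph{some} admissible $T$ satisfies the comparison, and no mechanism for that is offered; the inscribed-angle idea cannot rescue a statement that reverses sign.

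For comparison, the paper never compares the tangent triangle's circumradius with $R(K)$ at all. It first proves \eqref{eq:sailing boats} via Lemma \ref{lem:triangles and sail boats}, a multi-step deformation of $C=T\cap\B$ (tangent triangle intersected with the circumball) that increases the breadth in direction $u^3$ while keeping $r$, $R$, $D$ under control, terminating in a triangle or a sailing boat with the \emph{same} four radii constraints. Theorem \ref{thm:4} then falls out almost for free: for $r(K)\le r(\Iso[\arcsin(\nicefrac{D}{2R})])$ Part (a) of that lemma compares $K$ directly with a triangle, which is extremal for \eqref{eq:triangles}; for larger $r$ the right-hand side of \eqref{eq:triangles} is increasing in $r$ and coincides at $r=r(\Iso)$ with the $r$-independent right-hand side of \eqref{eq:sailing boats}, so \eqref{eq:triangles} follows from \eqref{eq:sailing boats}. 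The ingredient your sketch is missing is precisely that deformation step: one must cut $T$ down by the circumball and deform, rather than hope that the raw tangent triangle already has a compatible circumradius.
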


\section{The skeleton of the 3-dimensional diagram}\label{s:skeleton}

This section is devoted to the description of the families of bodies filling the faces
of $\bd(f(\CK^2))$.

We start in Subsection \ref{ss:vertices}
describing the sets fulfilling three or more inequalities with equality,
the vertices of $\bd(f(\CK^2))$.
In Subsection \ref{ss:edges} we discuss the families of sets fulfilling two inequalities with equality,
the edges of $\bd(f(\CK^2))$.
Finally, in Subsection \ref{ss:facets} the sets filling the different facets
are explained. For the description of these sets we always assume that $\B$ is the circumball,
but for a better understanding of the geometric inequalities we will keep the value $R(K)$ in each description.

In case there does not exist a unique set, which is mapped to a boundary point of
the diagram, we will usually describe in some way the range of sets mapped to that point,
\eg by giving maximal and minimal sets (with respect to set inclusion) if appropriate. However,
as this is not our major topic, we neither claim completeness nor present proper proofs.

\subsection{Vertices of the diagram}\label{ss:vertices}

The vertices, including their radii, and for each the inequalities which are fulfilled with
equality are listed in the following:

\begin{itemize}
\item[$\B$] Obviously, the \cemph{dblue}{euclidean ball} $\B$ is the unique set mapped to
  $f(\B)=(1,1,1)$ in the diagram. It is extreme for the inequalities \eqref{lb_1}, \eqref{ib_1},
  \eqref{ub_1}, and \eqref{ib_2}.

\item[$\L$] The radii of the \cemph{dblue}{line segment} $\L$ are easy to see, too:
  $f(\L)=(0,0,1)$ and also it is the only set mapped to this coordinates.
  The inequalities it fulfills with equality are \eqref{lb_1}, \eqref{lb_2}, \eqref{ib_1},
  and \eqref{eq:triangles}. It also fulfills \eqref{eq:sailing boats} with equality,
  but this is an artefact which will be explained in Remark \ref{artefact}.

\item[$\EqT$] The radii of the \cemph{dblue}{equilateral triangle} $\EqT$ are well known:
  $f(\EqT)=(\nicefrac 1 2, \nicefrac 3 4, \nicefrac {\sqrt{3}} 2)$. It is the unique set with these
  coordinates and extreme for the
  inequalities \eqref{ub_1}, \eqref{eq:sailing boats}, \eqref{eq:triangles}, \eqref{lb_2}, and \eqref{ib_3}.

\item[$\ReT$] The \cemph{dblue}{Reuleaux triangle} $\ReT$ is the intersection of
  three euclidean balls of radius $\sqrt{3}$ centered in the vertices of
  $\EqT$. On the one hand it has the same diameter and circumradius as
  $\EqT$. On the other hand it is of constant width, thus \eqref{lb_1} and \eqref{ib_2} imply
  $w(\ReT) = r(\ReT) + R(\ReT) = D(\ReT)$.
  Hence $f(\ReT)=(\sqrt{3}-1,\nicefrac {\sqrt{3}} 2,\nicefrac{\sqrt{3}} 2)$ and $\ReT$ is
  the unique set mapped to this point of the diagram. It is
  extreme for the inequalities \eqref{ub_1}, \eqref{ib_2} and \eqref{ib_3}.

\item[$\RAT$] The \cemph{dblue}{(isosceles) right-angled triangle} $\RAT$
    (for short we will sometimes ommit the term \enquote{isosceles})
    has diameter
    $D(\RAT)=2R(\RAT)$ and its width coincides with its height above the diameter edge,
    thus $w(\RAT)=R(\RAT)$. Using the semiperimeter formula for triangles, we obtain that the inradius is
  \[ r(\RAT) = \frac{D(\RAT) w(\RAT)}{D(\RAT) + 2\sqrt{2}R(\RAT)} =  \frac{R(\RAT)}{1+\sqrt{2}}
  = (\sqrt{2}-1) R(\RAT). \]
  Thus $f(\RAT)=(\sqrt{2}-1,\nicefrac 1 2,1)$ and there is no different $K$ mapped to this coordinates
  (as one may easily see in proceeding the construction of a set mapped to this coordinates).
  $\RAT$ is extreme for the inequalities \eqref{eq:sailing boats}, \eqref{eq:triangles} and \eqref{ib_1}.

\item[$\SB$] The \cemph{dblue}{(right-angled concentric) sailing boat} $\SB$ is the
  intersection of $\B$ and a homothetic of $\RAT$ with incenter at 0 and a vertex $v$ located where
  the two edges of equal length intersect on $\S$ (see Figure \ref{fig:SB}).
  Hence the in- and circumball of $\SB$ are concentric and one
  can easily see from the construction, that
    $\nicefrac12 \, D(\SB) = \sqrt{2} r(\SB) =  R(\SB)$.
  Its width is attained in any of the orthogonal directions to any three of the edges of $\RAT$.
  Especially from measuring between $v$ and its opposite edge, we obtain
  \[w(\SB) = r(\SB) + R(\SB) = \left(\nicefrac 1 {\sqrt{2}} + 1 \right) R(\SB). \]
  Thus $f(\SB) = \left(\nicefrac 1 {\sqrt{2}}, \nicefrac 1 2 (\nicefrac 1 {\sqrt{2}} + 1) ,1\right)$.
  The sailing boat fulfills inequalities \eqref{ub_1}, \eqref{eq:sailing boats} and \eqref{ib_1} with equality.
  Finally, denoting the (circumspherical) pentagon formed from the five vertices of $\SB$ by
  $\mathds{CP}$, we obtain
  $f(K)=f(\SB)$ for any $K \in \CK^2$, iff $\mathds{CP} \subset K \subset \SB$.

  \begin{figure}
    \centering
    \begin{subfigure}[b]{0.45\textwidth}
      \includegraphics[trim = 1cm 3.5cm 1cm 2cm, width =\textwidth]{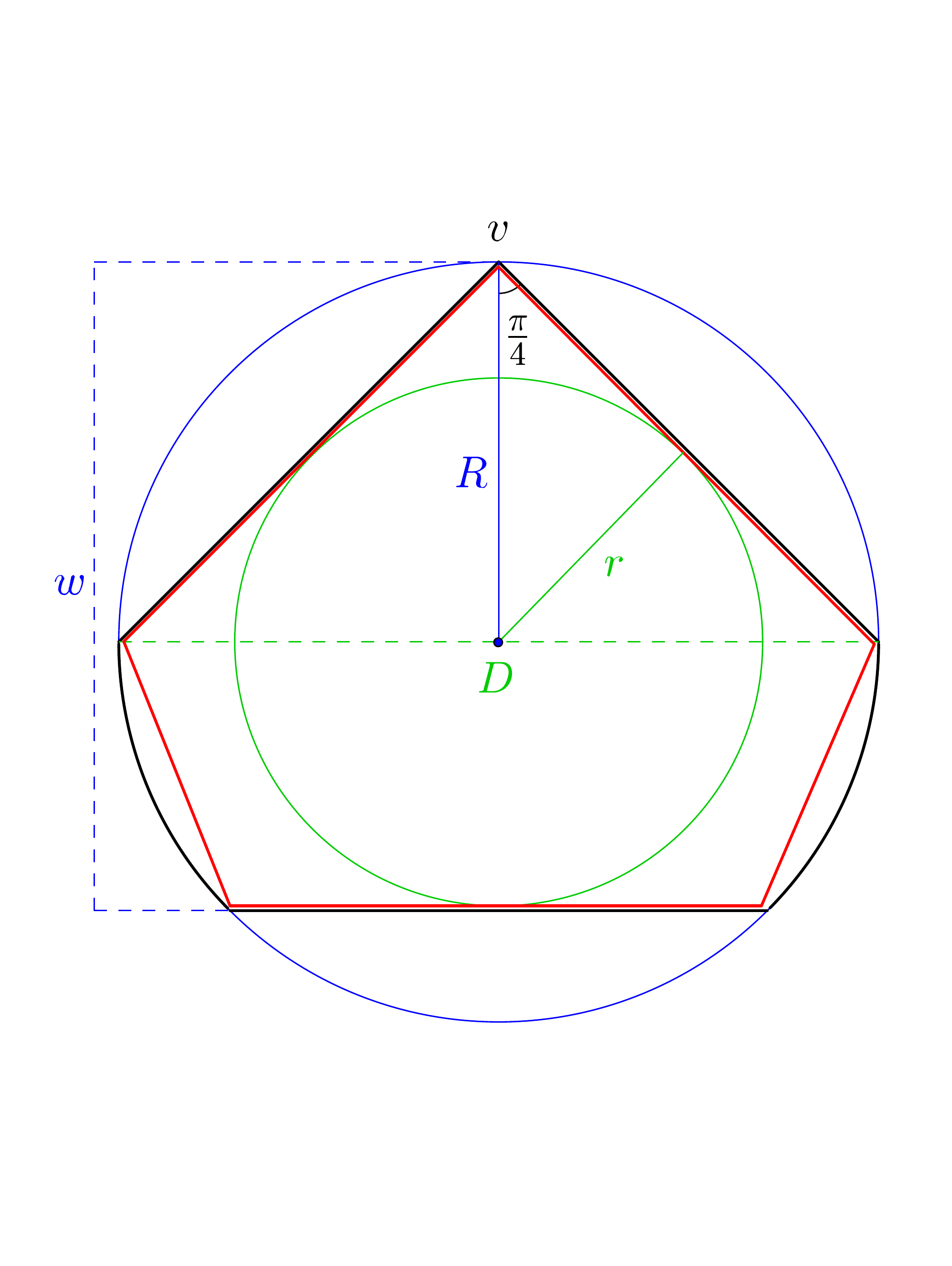}
      \caption{The sailing boat $\SB$ in black and the pentagon $\mathds{CP}$
        (sharing the vertices with $\SB$) in red.}
      \label{fig:SB}
    \end{subfigure} \hfill 
    \begin{subfigure}[b]{0.45\textwidth}
      \includegraphics[trim = 1cm 3cm 1cm 1.5cm, width = 0.93\textwidth]{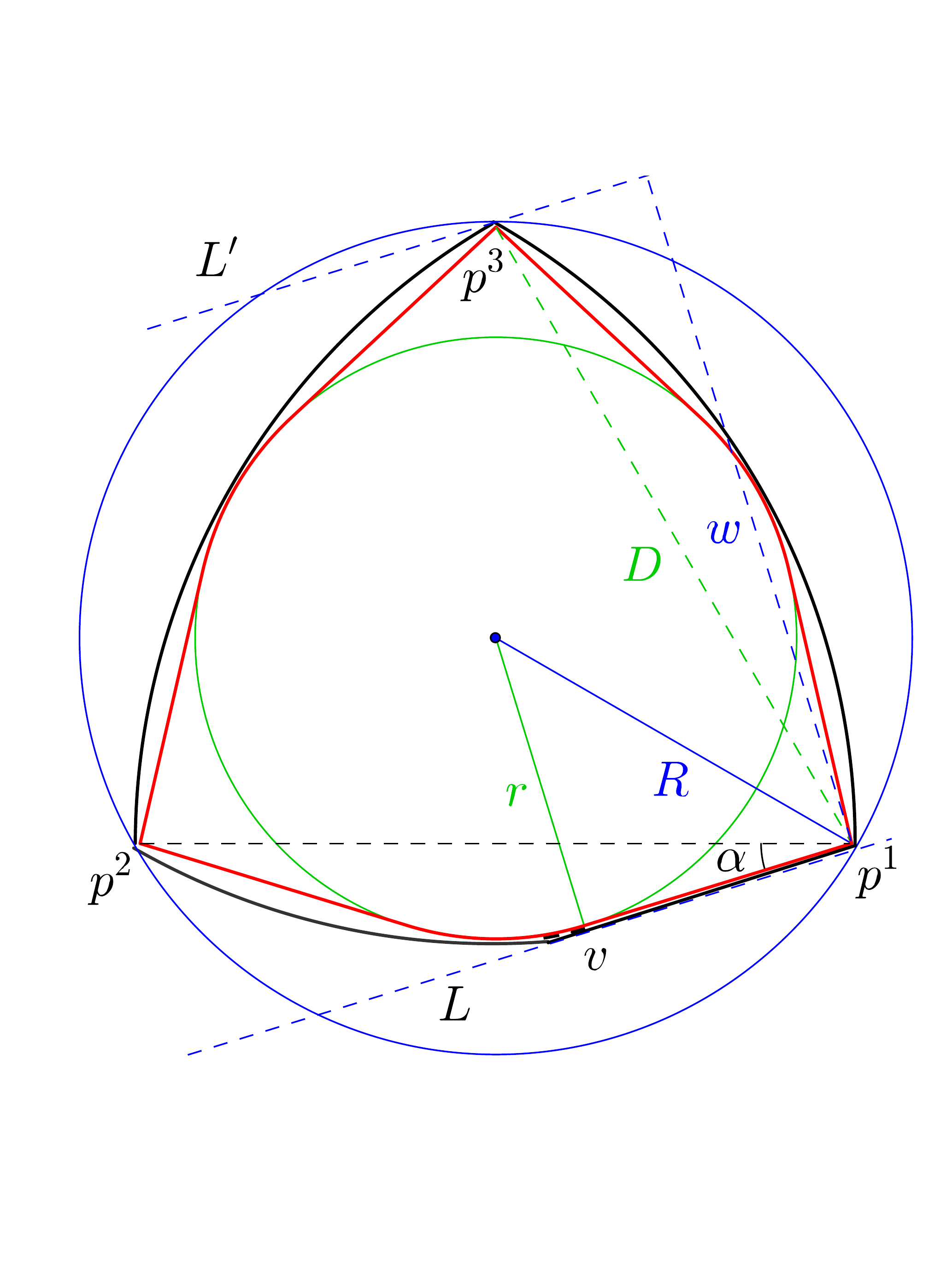}
      \caption{The sliced Reuleaux triangle $\SRT$ in black
      and $\SRT_{\min}$ (the minimal
      set sharing all radii with $\SRT$) in red.}
      \label{fig:SRT}
    \end{subfigure}
    \caption{Sailing boats and sliced Reuleaux triangles.}
    \label{fig:SB+SRT}
  \end{figure}

\item[$\SRT$] The \cemph{dblue}{sliced Reuleaux triangle} $\SRT$ is
  the intersection of a Reuleaux triangle $\ReT$ and a halfspace $H$ which supports a vertex
  of $\ReT$, say $p^1$, and its inball in a point $v$ (see Figure \ref{fig:SRT}).
  By construction it keeps the same diameter,
  in- and circumradius as $\ReT$. The width of $\SRT$ is attained between the parallel lines $L=\bd(H)$,
  and $L'$ supporting $\SRT$ in the vertex $p^2$, which is furthest from $v$.

  Defining $\alpha$ to be the angle between $L$ and the line segment $[p^2,p^3]$,
  where $p^3$ is the remaining vertex of $\EqT$, one easily computes
  \[r(\SRT) =  R(\SRT) \sin\left(\nicefrac{\pi}{6}+\alpha\right) \quad \text{and} \quad
  w(\SRT) =  D(\SRT) \cos\left(\nicefrac{\pi}{6}-\alpha\right). \]
  Hence $\alpha=\arcsin\left(\nicefrac{r(\SRT)}{R(\SRT)}\right) - \, \nicefrac \pi 6$ and thus
  \[w(\SRT) =  D(\SRT) \cos\left(\nicefrac{\pi}{3}-\arcsin\left(\nicefrac{r(\SRT)}{R(\SRT)}\right)\right).\]
  We obtain $f(\SRT) = \left(\sqrt{3}-1, \nicefrac {\sqrt{3}} 2 \cos(\nicefrac \pi 3-\arcsin(\sqrt{3}-1)),
  \nicefrac{\sqrt{3}} 2\right)$
  and extremality for the inequalities \eqref{eq:3PT}, \eqref{ib_2} and \eqref{ib_3}.

  Denoting by $\SRT_{\min}$ the convex hull of the vertices and the inball of $\ReT$,
    one may easily verify that $f(K)=f(\SRT)$ for any $K \in \CK^2$, iff
  $\SRT_{\min} \subset K \subset \SRT$.

\item[$\FRT$] Let $\FRT$ be the \cemph{dblue}{flattened Reuleaux triangle}, obtained by replacing
  two of the three edges of $\EqT$ by the according arcs of $\ReT$.
  It has the same circumradius, diameter and width than $\EqT$ and
  defining $a$ to be the distance from the center of the inball to each vertex incident with the linear
  edge, it follows $a^2=r(\FRT)^2 + \nicefrac{1}{4} \, D(\FRT)^2$ and $D(\FRT) = a + r(\FRT)$
  (see Figure \ref{fig:FRT}).
  Hence $4(D(\FRT) - r(\FRT))^2 = 4r(\FRT)^2 + D(\FRT)^2$ and,
  after dividing by $D(\FRT)$, we obtain $3D(\FRT) = 8 r(\FRT)$.
  Thus $f(\FRT)=(\nicefrac {\sqrt{27}} 8, \nicefrac 3 4, \nicefrac {\sqrt{3}} 2)$
  and equality holds in the inequalities \eqref{lb_2}, \eqref{eq:3PT} and \eqref{ib_3}.

  Denoting the convex hull of $\EqT$ and the inball of $\FRT$
  by $\FRT_{\min}$, it holds $f(K)=f(\FRT)$, iff $\FRT_{\min} \subset K \subset \FRT$
  (see Figure \ref{fig:FRT}).

  \begin{figure}
    \centering
    \begin{subfigure}[b]{0.45\textwidth}
      \includegraphics[trim = 1cm 3.5cm 1cm 2cm, width =\textwidth]{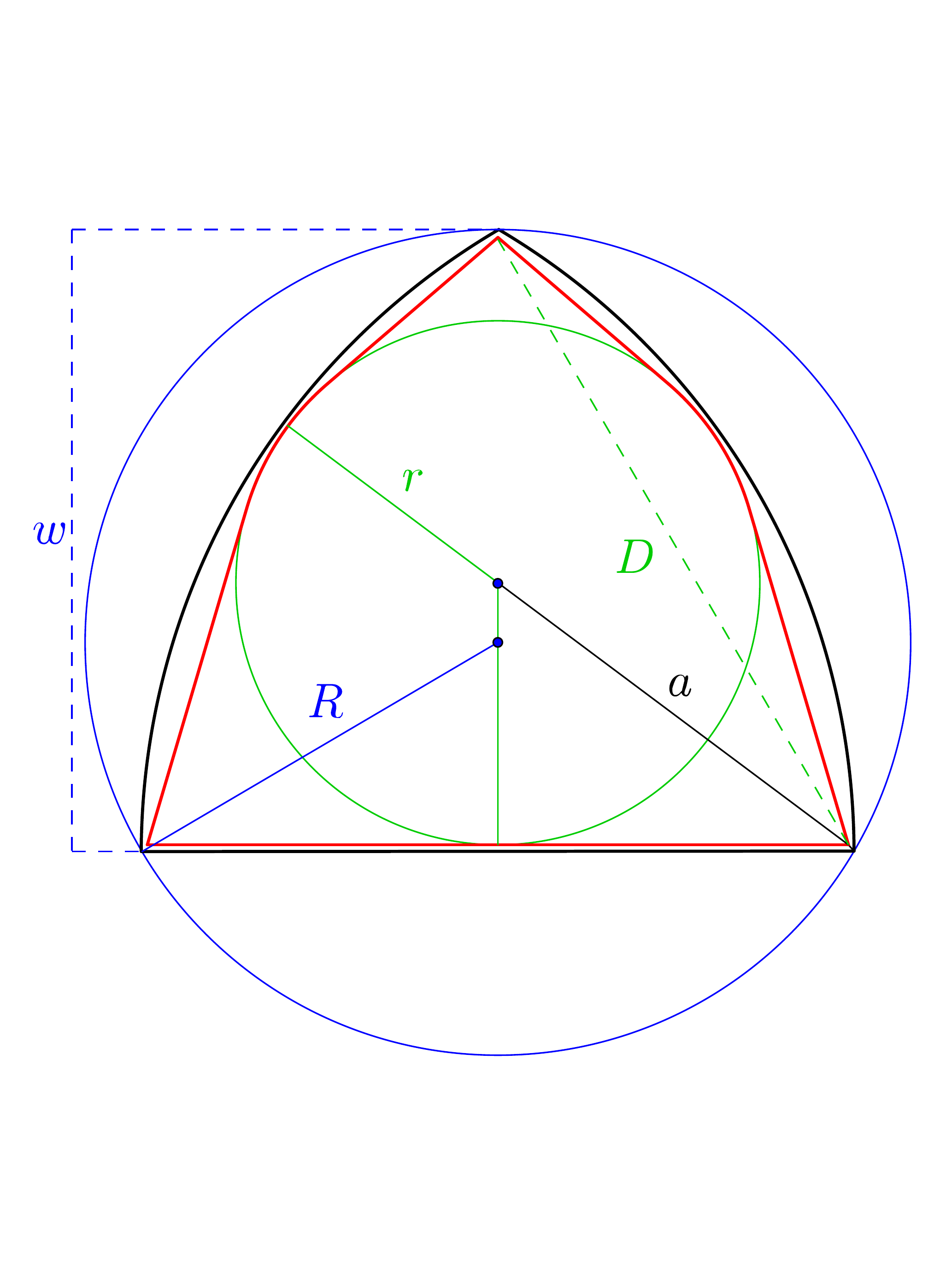}
      \caption{In black $\FRT$, in red $\FRT_{\min}$.}
      \label{fig:FRT}
    \end{subfigure} \hfill 
    \begin{subfigure}[b]{0.45\textwidth}
      \includegraphics[trim = 1cm 3.5cm 1cm 2cm, width = \textwidth]{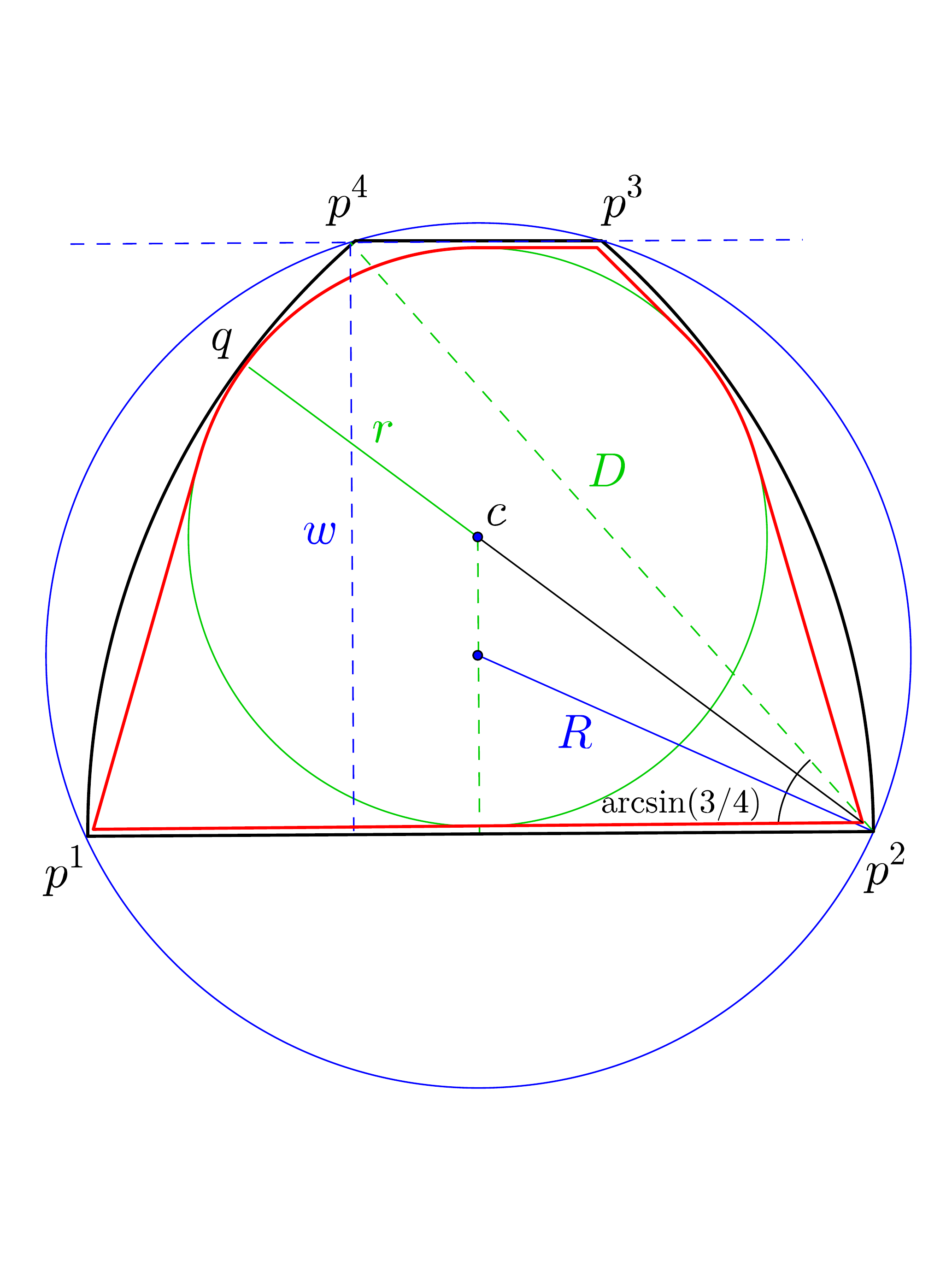}
      \caption{In black $\BT$, in red $\BT_{\min}$.}
      \label{fig:BT}
    \end{subfigure}
    \caption{The flattened Reuleaux triangle and the bent trapezoid.}
  \end{figure}

\item[$\BT$] Let $p^1,p^2,p^3,p^4 \in \S$ be, \st~$\conv\{p^1,p^2,p^3,p^4\}$ is a trapezoid
  with $[p^1,p^2]$ the longer and $[p^3,p^4]$ the shorter parallel line, and \st~$\conv\{p^1,p^2,p^3\}$
  as well as $\conv\{p^1,p^2,p^4\}$ are isosceles triangles (the first
  with $[p^1,p^2]$, $[p^1,p^3]$ the edges of equal length, the second with $[p^1,p^2]$, $[p^2,p^4]$)
  and $\arcsin(\nicefrac{3}{4})$ in both cases the angle between the two equal edges
  (see Figure \ref{fig:BT}). We write $\Iso[\arcsin(\nicefrac{3}{4})]$ for such an
  isosceles triangle (\cf Subsection \ref{ss:edges}).
  Substituting
  the edges $[p^1,p^4]$ and $[p^2,p^3]$ by two arcs of circumference of radius
  $\norm[p^1-p^2] = D(\Iso[\arcsin(\nicefrac{3}{4})])$ and centers $p^1$ and $p^2$, respectively,
  we obtain the \cemph{dblue}{bent trapezoid} $\BT$.

  By construction $\BT$ and
  $\Iso[\arcsin(\nicefrac{3}{4})]$ have the same width $w$, diameter $D$, and circumradius $R=1$.
  We prove that the inball of $\BT$ is tangent to the two parallels and the two arcs:
  Let $B$ be a ball of radius $r=\nicefrac12 \, w$ and center
  $c=\nicefrac14 (p^1+p^2+p^3+p^4)$, and denote the intersection point of the line through $p^2$
  and $c$ with the bow between $p^1$ and $p^4$ by $q$.
  We show that $\norm[c-q]=r$ which then implies $r(\BT)=r$:
    \[\text{(i)} \;~ \norm[c-q] = D - \norm[c-p^2], \quad \text{(ii)} \;~
    r^2+\nicefrac{1}{4} \, D^2 = \norm[c-p^2]^2, \quad \text{(iii)} \;~
    w=\nicefrac34 \, D. \]
    From (iii) we obtain $D=\nicefrac{8}{3} \, r$, and using (i) combined with (ii) gives
    \[
    \norm[c-q] = D - \sqrt{r^2 + \nicefrac{1}{4} \, D^2} = \nicefrac{8}{3} \, r -
    \sqrt{r^2+\nicefrac{16}{9} \, r^2} = r,
    \]
    as we wanted to show. Thus $r(\BT)=r=\nicefrac12 \, w$.

  For computing $D$,
    we use the fact that the line from $p^2$ through 0
    is the bisecting line of the angle $\arcsin(\nicefrac{3}{4})$ between $[p^1,p^2]$
    and $[p^2,p^4]$ at $p^2$ which means
    \[
    \frac{D}{2R} =\frac{D(\Iso[\arcsin(\nicefrac{3}{4})])}{2R(\Iso[\arcsin(\nicefrac{3}{4})])}
    = \cos\left(\frac 1 2 \arcsin\left(\frac{3}{4}\right)\right).
    \]
    This implies
    \[ D= 2\cos\left(\nicefrac{1}{2}\arcsin(\nicefrac{3}{4})\right)R
    = \sqrt{2+\nicefrac{\sqrt{7}}{2}} \, R,\]
    and using the above properties on the radii of $\BT$ we obtain that
    \[ 2r=w=\nicefrac{3}{4} \, D=\nicefrac{3}{4}\sqrt{2+\nicefrac{\sqrt{7}}{2}} \, R.\]

  Hence
  \[
  f(\BT)=\left(\nicefrac{3}{8}\sqrt{2+\nicefrac{\sqrt{7}}{2}},
    \nicefrac{3}{8}\sqrt{2 + \nicefrac{\sqrt{7}}{2}},
    \nicefrac{1}{2}\sqrt{2 + \nicefrac{\sqrt{7}}{2}}\right),
  \]
  and one may easily check that it fulfills the inequalities \eqref{lb_1}, \eqref{lb_2},
  and \eqref{eq:3PT} with equality.

  Denoting the convex hull of $p^1,p^2,p^3$ and $B$ by $\BT_{\min}$,
  it holds $f(K)=f(\BT)$, iff $\BT_{\min} \subset K \subset \BT$
  (\cf~Figure \ref{fig:BT}).

\item[$\H$] The last vertex satisfies \eqref{lb_1}, \eqref{ib_2} and \eqref{eq:3PT} with equality, whereby its shape
  is determined as follows: from \eqref{lb_1} there must exist two parallel
  lines supporting the inball of the set and because of \eqref{ib_2} it must have concentric in-
  and circumball.
  The two parallels supporting the inball contain two separated arcs
  of the circumsphere between them.
  Let $p^1,p^2,p^3$ be points, \st~$p^2$ and $p^3$ lie in one
  arc and each in one of the supporting lines, while
  $p^1$ lies in the other arc and at the same distance from $p^2$ and $p^3$.
  Finally, we connect $p^2$ and $p^3$ by an arc centered
  in $p^1$, its radius as well as the
  radius of the inball chosen, \st~the arc is tangent to the inball (\cf Figure \ref{fig:H}).
  The convex set bounded by the two supporting parallel lines and the three arcs
  with centers $p^1,p^2,p^3$ and radius $\norm[p^1-p^2]$
  is called the \textcolor{dblue}{hood} and denoted by $\H$.

  Remember that we always assume $0$ to be the circumcenter and let $\gamma$ be
  \st~$\Iso=\conv\{p^1,p^2,p^3\}$ is the isosceles triangle built by $p^1,p^2,p^3$. Thus
    $R(\H)=R(\Iso)$, $D(\H)=D(\Iso)=r(\H)+R(\H)$
  and $2r(\H)=w(\H)$.

  For the computation of $r(\H)$ let $\zeta$
  denote the distance from $0$ to $[p^2,p^3]$. Considering the two right-angled triangles
  $\conv\left\{0,p^2,\nicefrac{1}{2} \, (p^2+p^3)\right\}$ and
  $\conv\left\{p^1,p^2,\nicefrac{1}{2} \, (p^2+p^3)\right\}$
  we obtain
    \[\text{(i)} \quad  r(\H)^2+\zeta^2=R(\H)^2 \qquad \text{and} \qquad \text{(ii)}
    \quad D(\H)^2=(\zeta+R(\H))^2+r(\H)^2 \]
  (\cf Figure \ref{fig:H}).
  \begin{figure}
    \begin{center}
      \includegraphics[trim = 0mm 4cm 0mm 3cm, width=8cm]{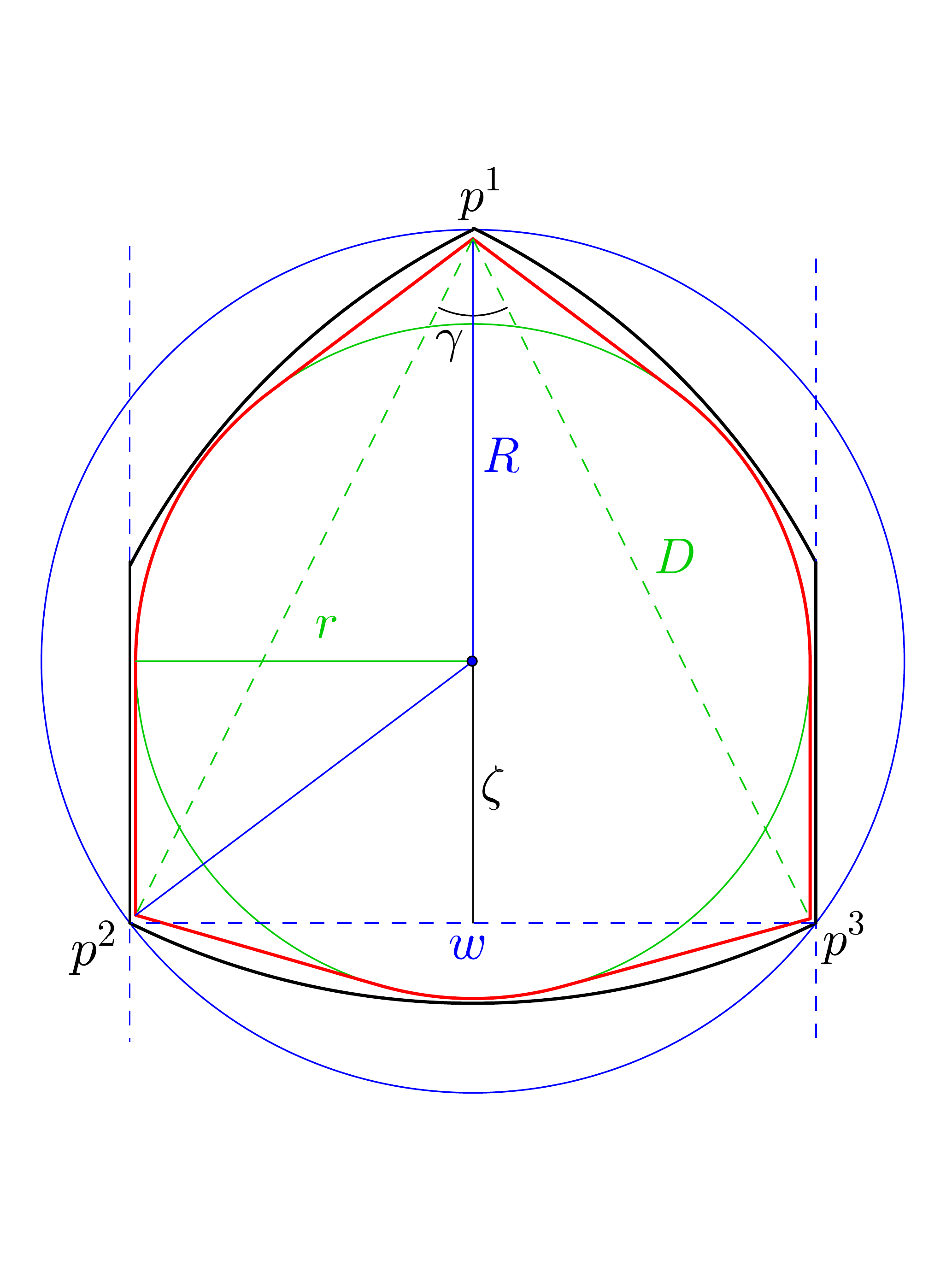}
      \caption{The hood $\H$ in black and $\H_{\min}$ in red.}
      \label{fig:H}
    \end{center}
  \end{figure}
  Solving (i) for $\zeta$ and inserting it into (ii), keeping into account that
  $D(\H)=r(\H)+R(\H)$, we obtain
  \[
  (r(\H)+R(\H))^2=D(\H)^2=(\sqrt{R(\H)^2-r(\H)^2}+R(\H))^2+r(\H)^2.
  \]
    Solving for $r(\H)$ gives the unique positive real solution
    \[
    r(\H)=\left(\frac12\sqrt{\varsigma+\xi}+\sqrt{-\varsigma-\xi+\frac{16}{\sqrt{\varsigma+\xi}}}-1\right)R(\H),
    \]
  where $\varsigma=\nicefrac13 \, (864-96\sqrt{69})^{\nicefrac13}$ and
  $\xi=2(\nicefrac23)^{\nicefrac23}(9+\sqrt{69})^{\nicefrac13}$.
  Thus
  \[
  f(\H)=\left(r(\H),r(\H),\nicefrac12(r(\H) + 1)\right) \approx (0.7935,0.7935,0.8967).
  \]
  Denoting the convex hull of the inball of $\H$ and $p^1,p^2,p^3$,
  by $\H_{\min}$,
  it holds $f(K)=f(\H)$, iff $\H_{\min} \subset K \subset \H$
  (\cf~Figure \ref{fig:H}).

\end{itemize}

\begin{table}[hbt]
  \centering 
  \begin{tabular}{l|c|c|c}
    Name & Symbol & Approximate Coordinates & $lb_{1,2,3} \ ib_{1,2,3} \ ub_{1,2,3} $ \\ \hline
    Ball & $\B$   & $(1,1,1)$
    & $+  -  -  +  +  -  +  -  -$ \\[2ex]
    Equilateral triangle & $\EqT$ & $(0.5,0.75,0.8660)$
    & $-  +  -  -  -  +  +  +  +$ \\[2ex]
    Line segment & $\L$ & $(0,0,1)$
    & $+  +  -  +  -  -  -  \pm  +$ \\[2ex]
    Reuleaux triangle & $\ReT$ & $(0.7321,0.8660,0.8660)$
    & $-  -  -  -  +  +  +  -  -$ \\[2ex]
    Right-angled triangle & $\RAT$ & $(0.4142,0.5,1)$
    & $-  -  -  +  -  -  -  +  +$ \\[2ex]
    Sailing boat & $\SB$ & $(0.7071,0.8536,1)$
    & $-  -  -  +  -  -  +  +  -$ \\[2ex]
    Sliced Reuleaux triangle & $\SRT$ & $(0.7321,0.8440,0.8660)$
    & $-  -  +  -  +  +  -  -  -$ \\[2ex]
    Flattened Reuleaux triangle & $\FRT$ & $(0.6495,0.75,0.8660)$
    & $-  +  +  -  -  +  -  -  -$ \\[2ex]
    Bent trapezoid & $\BT$ & $(0.6836,0.6836,0.9114)$
    & $+  +  +  -  -  -  -  -  -$ \\[2ex]
    Hood & $\H$ & $(0.7935, 0.7935, 0.8967)$
    & $+  -  +  -  +  -  -  -  -$\\
    \multicolumn{4}{c}{\text}
  \end{tabular}

  \caption{The table lists the planar sets mapped to vertices of the 3-dimensional
    Blaschke-Santal\'o diagram, their (approximate) radii, and the inequalities
    they fulfill with equality ($+$) or not ($-$).
    The $\pm$ for the line segment in the \eqref{eq:sailing boats}-column
    is explained in Remark \ref{artefact}}
  \label{table}
\end{table}

\begin{rem} \label{artefact}
Considering Table \ref{table} we may observe the following: all inequalities besides
\eqref{eq:triangles} are fulfilled with equality by
exactly four vertices. Moreover, since all three vertices of \eqref{eq:triangles} also
fulfill \eqref{eq:sailing boats} with equality (and since we will later prove these two inequalities
more or less within one proof), we may understand them as one inequality in two parts.
Doing so all inequalities are fulfilled by exactly four vertices,
a fact which in a polytopal setting would be quite exceptional. (To be honest, accepting the two inequalities
to be a joint one, the right-angled triangle would not be a vertex anymore due to our definition,
but nevertheless we think the whole matter is remarkable.)
\end{rem}

\subsection{Edges of the diagram}\label{ss:edges}

Next we give constructions of explicit families of convex sets mapped onto the intersection of two of the surfaces
obtained from the equality cases of the inequalities collected in Section \ref{s:main ineq}. In particular,
every family of sets $\{K_t\}_{t\in[t_1,t_2]}$ described, induces a closed differentiable curve $f(\{K_t:t\in[t_1,t_2]\})$
in $\R^3$. In our nomenclature they form the edges of the diagram. Each edge is named via its two endpoints, \eg
$(\EqT,\B)$ denotes the edge between $\EqT$ and $\B$.

\begin{itemize}
\item[$(\ReT,\B)$] It is a well known property that $w(K) = r(K) + R(K) = D(K)$, iff $K$ is of constant width.
  Thus all \emph{sets of constant width} fulfill \eqref{ub_1} and \eqref{ib_2} with equality.
  Essentially all edges with $\B$ as an endpoint are real linear edges of the diagram:
  because of Lemma \ref{lem:starshaped} we may pass the full edge from $\ReT$ to $\B$
  with \cemph{dblue}{rounded Reuleaux triangles}, \ie
  the outer parallel bodies $(1-\lambda) \ReT + \lambda \B$, $\lambda \in [0,1]$ of the Reuleaux triangle.

\item[$(\L,\B)$] Whenever $K$ is \emph{centrally symmetric} it satisfies the equations
  $D(K)=2R(K)$ and $w(K)=2r(K)$. Thus $f$ maps $K$ onto the linear edge formed from
  the equality cases of \eqref{lb_1} and \eqref{ib_1}. Again, because of Lemma \ref{lem:starshaped},
  the outer parallel bodies $(1-\lambda) \L + \lambda \B$, $\lambda \in [0,1]$, of $\L$
  (called \cemph{dblue}{sausages}) already fill the whole edge.

\item[$(\SB,\B)$] Lemma \ref{lem:starshaped} implies that all \cemph{dblue}{rounded sailing boats}
    $(1-\lambda) \SB + \lambda \B$, $\lambda \in [0,1]$
    satisfy the inequalities \eqref{ub_1} and \eqref{ib_1} with equality and fill the corresponding edge of the diagram.

\item[$(\H,\B)$] Because of Lemma \ref{lem:starshaped}
  the \cemph{dblue}{rounded hoods} $(1-\lambda) \H + \lambda \B$, $\lambda \in [0,1]$
    satisfy the inequalities
  \eqref{lb_1} and \eqref{ib_2} with equality and their images through $f$ fill the corresponding edge.

\item[$(\L,\EqT)$] $\Iso$ denotes an \cemph{dblue}{isosceles triangle} with an angle $\gamma$ between
  the two edges of equal length (see Figure \ref{fig:IsoA}).
  If $\gamma \in [0,\nicefrac \pi 3]$, the two edges of equal length attain its diameter
  $D = D(\Iso) = 2R \cos(\nicefrac{\gamma}{2})$, where $R=R(\Iso)=1$.
    Abbreviating also $r=r(\Iso)$ and $w=w(\Iso)$, it was shown in \cite{HCS} and \cite{Sa} that
    \[ \left(2 + \sqrt{4 -\left(\nicefrac{D}{R}\right)^2}\right) r = w
    \quad \text{and} \quad 2 w R = D^2\sqrt{4-\left(\nicefrac{D}{R}\right)^2}.\]

  Thus one may check that $\Iso$ fulfills \eqref{eq:triangles}
    and \eqref{lb_2} with equality for any $\gamma \in [0,\nicefrac \pi 3]$.

  \begin{figure}
    \centering
    \begin{subfigure}[b]{0.45\textwidth}
      \includegraphics[trim = 1cm 3.5cm 1cm 2cm, width =\textwidth]{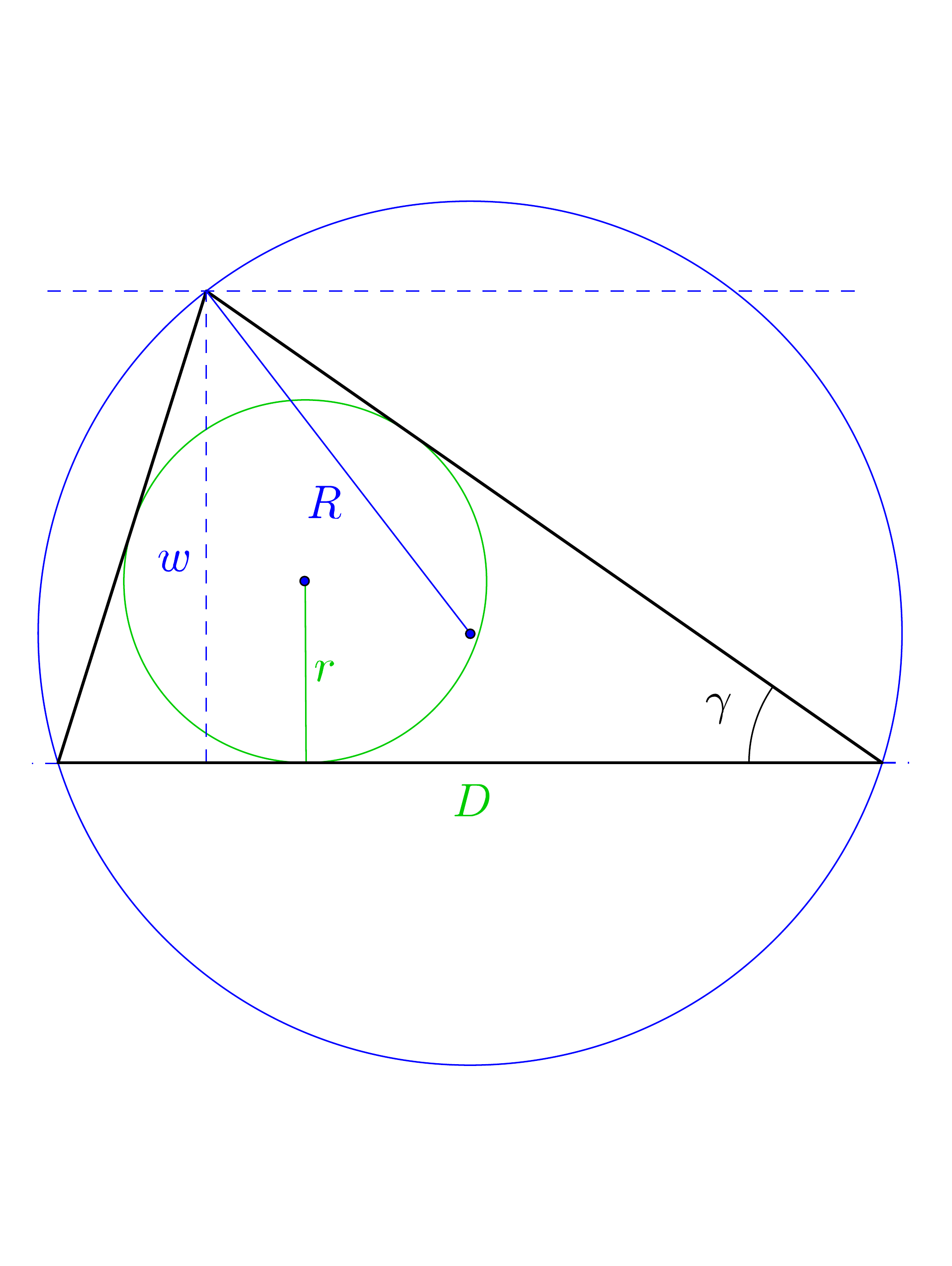}
      \caption{$\Iso$, $\gamma\in[0,\nicefrac{\pi}{3}]$.}
      \label{fig:IsoA}
    \end{subfigure} \hfill 
    \begin{subfigure}[b]{0.45\textwidth}
      \includegraphics[trim = 1cm 3.5cm 1cm 2cm, width = \textwidth]{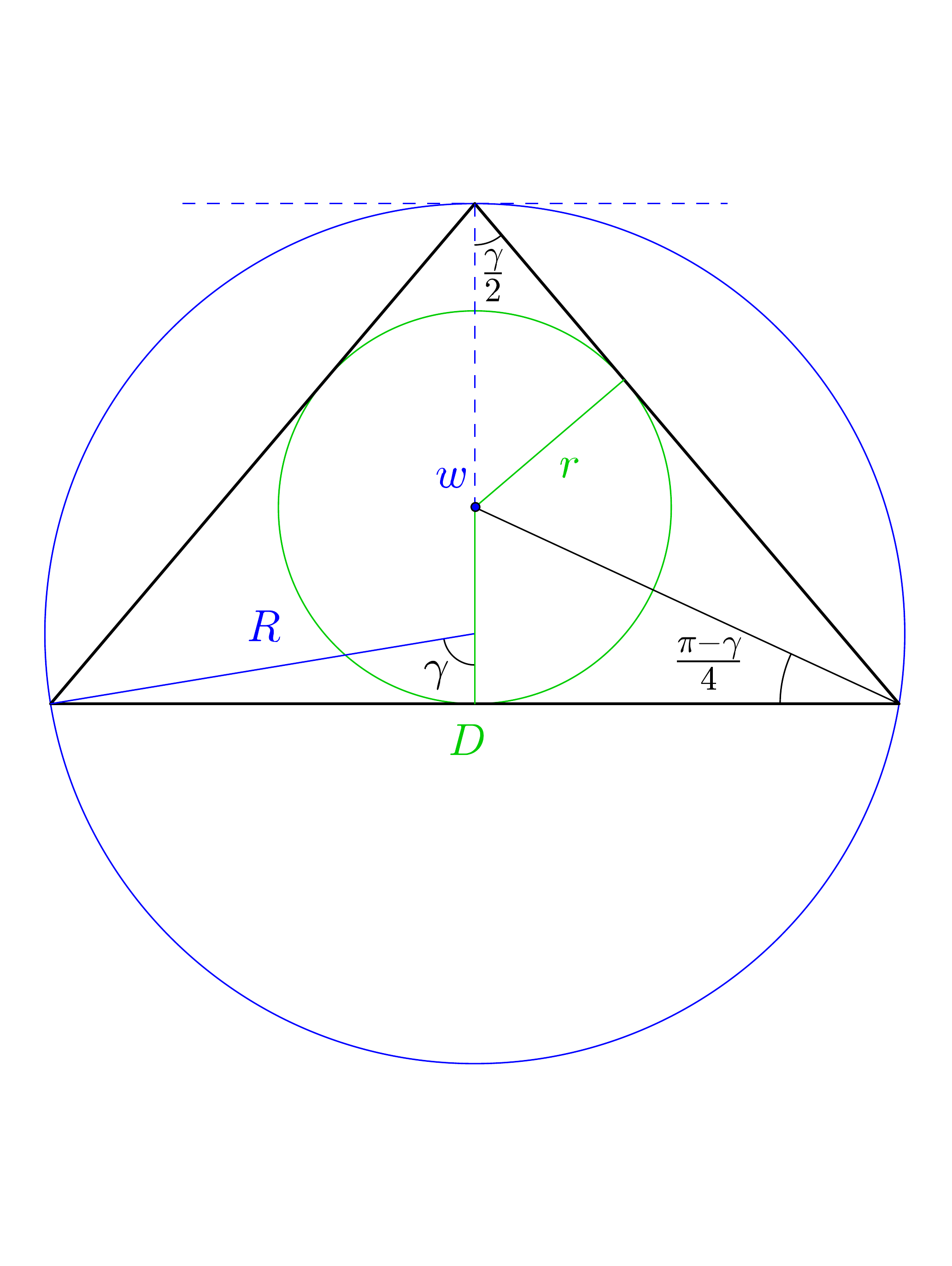}
      \caption{$\Iso$, $\gamma\in[\nicefrac{\pi}{3},\nicefrac{\pi}{2}]$.}
      \label{fig:IsoB}
    \end{subfigure}
    \caption{(Acute) isosceles triangles.}
  \end{figure}

\item[$(\RAT,\EqT)$] Consider the family of \cemph{dblue}{isosceles triangles} $\Iso$ as described above,
  but now with $\gamma \in [\nicefrac \pi 3, \nicefrac \pi 2]$. Obviously their diameter $D(\Iso)$
  is attained by the edge opposite to $\gamma$. Using Lemma \ref{lem:angle} we
  obtain that the angle at the circumcenter between the height onto the diametral edge and the radiusline
  from the center to one of the diametral vertices is again
  $\gamma$ (\cf Figure \ref{fig:IsoB}).
  The width is obviously attained orthogonal to the diametral edge and thus it is the sum of the inradius
  and the distance from the incenter to the opposing vertex. Considering the right angled triangle
  with the incenter, the midpoint of the diametral edge, and one of its endpoints as vertices,
  it is easy to check that the interior angle in that endpoint is $\nicefrac{(\pi-\gamma)}{4}$.
  Hence $2r(\Iso)=D(\Iso)\tan(\nicefrac{(\pi-\gamma)}{4})$ and using
  trigonometric identities it follows
  \[
  \tan\left(\frac{\pi -\gamma}{4}\right)=\frac{1-\cos\left(\nicefrac{(\pi-\gamma)}{2}\right)}
  {\sin\left(\nicefrac{(\pi-\gamma)}{2}\right)}
  =\frac{1-\sin\left(\nicefrac{\gamma}{2}\right)}{\cos\left(\nicefrac{\gamma}{2}\right)}.
  \] Altogether, omitting arguments we have
    \begin{align*}
      D &= 2R \sin(\gamma), \qquad w = r \left(1+\frac{1}{\sin\left(\nicefrac{\gamma}{2}\right)}\right),
      \quad \text{and} \\
      r & = \frac{D}{2} \left(\frac{1}{\cos(\nicefrac{\gamma}{2})}-\tan\left(\frac{\gamma}{2}\right)\right).
    \end{align*}

  Finally, again using trigonometric identities, we may remove $\gamma$ from the width formula in two ways
    \begin{align*}
      w & =r \left( 1 + \frac{1}{\sin(\nicefrac{\gamma}{2})}\right) = r\left(1 + \sqrt{\frac{2}{1-\cos(\gamma)}}\right)\\
      & = r\left(1 + \frac{\sqrt{2}}{\sin(\gamma)} \sqrt{1+\cos(\gamma)}\right) =
      r\left(1 + \frac{2\sqrt{2}R}{D}\sqrt{1+\sqrt{1-\left(\frac{D}{2R}\right)^2}}\right)
      \intertext{or}
      & = r \left( 1 + \frac{1}{\sin(\nicefrac{\gamma}{2})}\right) =
      2r\left(1+\frac{1}{2}\left(\frac{1}{\sin(\nicefrac{\gamma}{2})}-1\right)\right)
      = 2r\left(1+\frac{\nicefrac{1}{\cos(\nicefrac{\gamma}{2})} -
          \tan(\nicefrac{\gamma}{2})}{2\tan(\nicefrac{\gamma}{2})}\right)\\
      & = 2r\left(1+\frac{r}{D\tan(\nicefrac{\gamma}{2})}\right)=2r\left(1+\frac{r(1+\cos(\gamma))}{D\sin(\gamma)}\right)\\
      &=2r\left(1+\frac{2rR}{D^2}\left(1+\sqrt{1-\left(\frac{D}{2R}\right)^2}\right)\right).
    \end{align*}
    proving that $\Iso$, $\gamma \in [\nicefrac \pi 3, \nicefrac \pi 2]$ is extreme for \eqref{eq:sailing boats} and
    \eqref{eq:triangles}.

\item[$(\L,\RAT)$] The next family we consider are the \cemph{dblue}{right-angled triangles} $\RecT$, where
  $r\in[0,r(\RAT)]$ denotes their inradius. Naming the edges as their lenghts
  $a,b$ and $D=D(\RecT)=2R(\RecT)$, abbreviating $w=w(\RecT)$
    and recognizing that the inball touches $D$, \st it is split into two segments of
    lengths $a-r$ and $b-r$ (see Figure \ref{fig:RecT}), we easily see that the perimeter $p$ of $\RecT$
    is $2r+2D$ (or $2r+4R$). Thus using the semiperimeter formular for the width, we obtain
    \[wD = 2A = rp = 2r(r+D).\]

  One may easily calculate that the right-angled triangles are extreme for the
  inequalities \eqref{eq:triangles} and \eqref{ib_1}.

  \begin{figure}
    \centering
    \begin{subfigure}[b]{0.45\textwidth}
      \includegraphics[trim = 1cm 3.5cm 1cm 1cm, width =\textwidth]{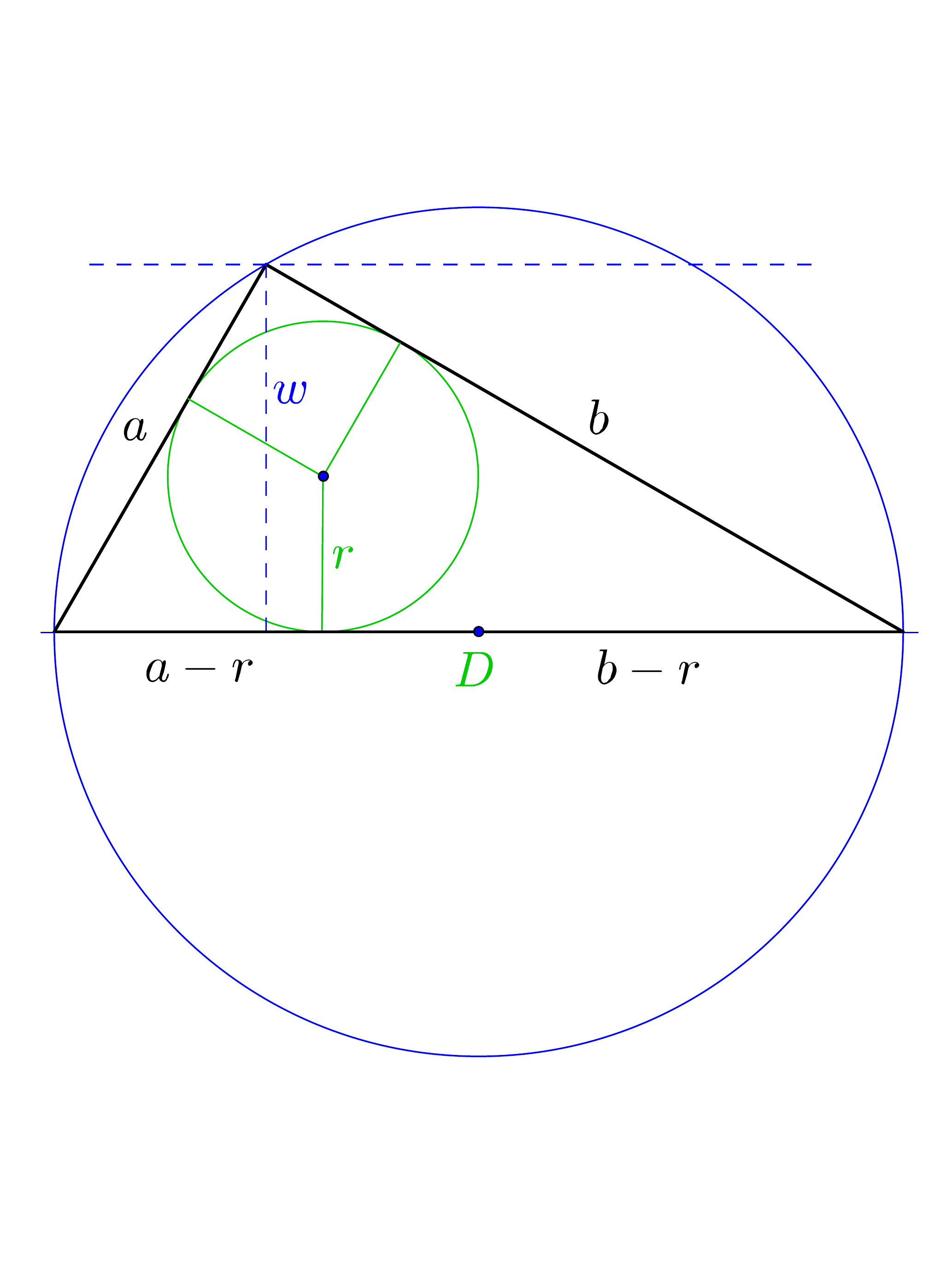}
      \caption{$\RecT$.\\ \hfill}
      \label{fig:RecT}
    \end{subfigure} \hfill
    \begin{subfigure}[b]{0.45\textwidth}
      \includegraphics[trim = 1cm 3.5cm 1cm 2cm, width =\textwidth]{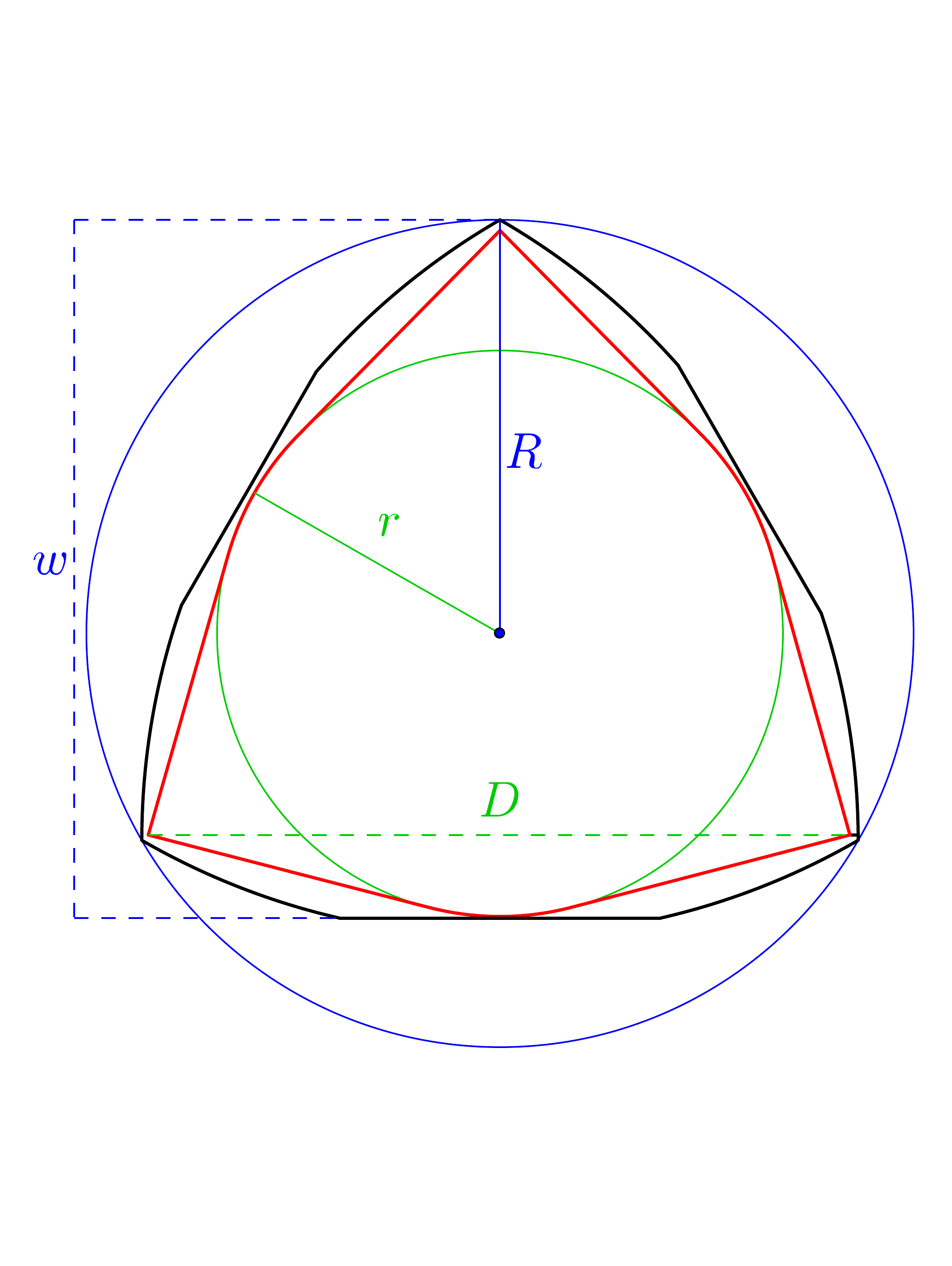}
      \caption{In black $\ReB$
        and in red a Yamanouti set with the same radii.}
      \label{fig:ReB}
    \end{subfigure}
    \caption{A right-angled triangle and a Reuleaux blossom.}
  \end{figure}

\item[$(\EqT,\ReT)$] For any $r \in [r(\EqT),r(\ReT)]$ we call $\ReB=(\nicefrac{r}{r(\EqT)} \, \EqT)\cap\ReT$ a
  \cemph{dblue}{Reuleaux blossom}, \st~$\ReB[r(\EqT)]=\ReB[\nicefrac 1 2]=\EqT$ and
  $\ReB[r(\ReT)]=\ReB[\sqrt3-1]=\ReT$
  (see Figure \ref{fig:ReB}). Obviously $r(\ReB)=r$, $D(\ReB)=\sqrt{3}R(\ReB)$, and
  $w(\ReB) = r+R(\ReB)$. Hence they are extreme for the inequalities \eqref{ub_1} and \eqref{ib_3}.

  A \cemph{dblue}{Yamanouti set} of inradius $r$ is mapped onto the same coordinates in the 3-dimensional
  Blaschke-Santal\'o diagram as the Reuleaux blossom $\ReB$.
  They are the convex hull of $\EqT$ and the intersection of
  three balls with centers in the vertices of $\EqT$ and radius taken in $[w(\EqT),w(\ReT)]$
  (see \cite{Sa} and \cf Figure \ref{fig:ReB}).
  While the Yamanouti set is a unique minimal set (with respect to set inclusion) mapped to these
  coordinates, the corresponding Reuleaux Blossom is maximal but not unique (as one may support the inball in different
  points than the chosen ones). However, the Reuleaux blossoms are the only maximizers which possess
  the same symmetry group as $\EqT$.

\item[$(\EqT,\SB)$] Let $\gamma\in[\nicefrac{\pi}{3},\nicefrac{\pi}{2}]$ and $c$
  the incenter of $\Iso$. Now rescale $\Iso-c$ by a factor $\rho$, \st~the vertex $p$ of
    $\rho(\Iso-c)$
    between the two edges of equal length touches the boundary of $\B$.
    Then the \cemph{dblue}{concentric sailing boat} is defined as $\CSB=\rho(\Iso-c)\cap\B$
    (see Figure \ref{fig:CSB}).
  Obviously, $R=R(\CSB)=1$ and
  $D=D(\CSB)= D(\Iso) = R\sin(\gamma)$.
    Moreover, since $\CSB$ is concentric and the distance of the center from $p$ is $R$ we obtain
    \[r=r(\CSB)= R \sin\left(\frac{\gamma}{2}\right) \quad \text{and} \quad w = r+R =
    r \left(1+\frac{1}{\sin(\nicefrac{\gamma}{2})}\right)\]
    However, since $D=R \sin(\gamma)$ it follows exactly in the same ways as shown for $w(\Iso)$ in the
    $(\EqT,\RAT)$-edge that
    \[w = r \left( 1 + \frac{1}{\sin(\nicefrac{\gamma}{2})}\right) =
    2r\left(1+\frac{2rR}{D^2}\left(1+\sqrt{1-\left(\frac{D}{2R}\right)^2}\right)\right).\]

   \begin{figure}
    \centering
    \begin{subfigure}[b]{0.45\textwidth}
      \includegraphics[trim = 1cm 3.5cm 1cm 2cm, width =\textwidth]{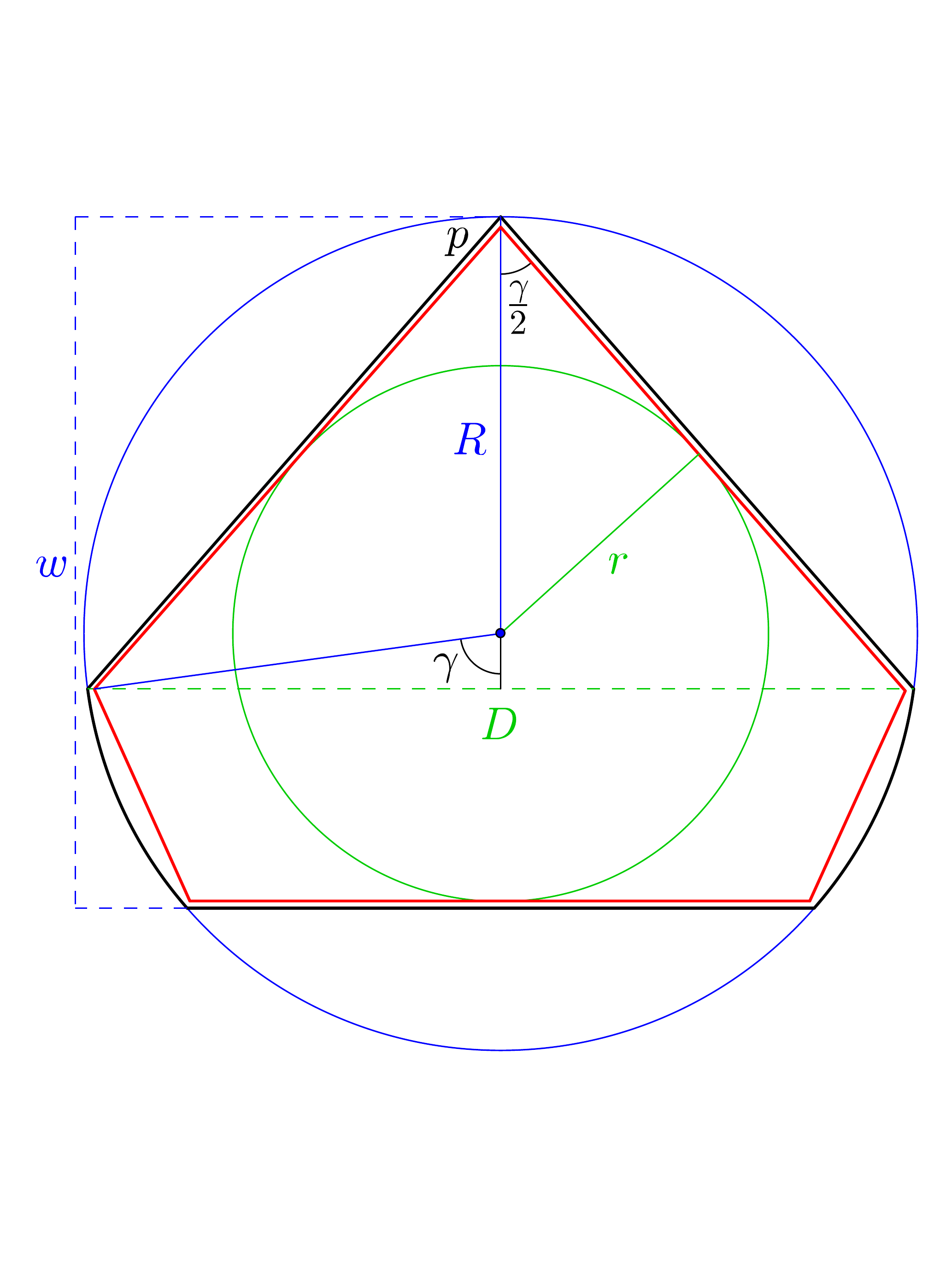}
      \caption{In black $\CSB$, in red the pentagon $\mathrm{CP}^{\circledcirc}_\gamma$.\\\hfill \\\hfill}
      \label{fig:CSB}
    \end{subfigure} \hfill 
        \begin{subfigure}[b]{0.45\textwidth}
      \includegraphics[trim = 1cm 3.5cm 1cm 2cm, width = \textwidth]{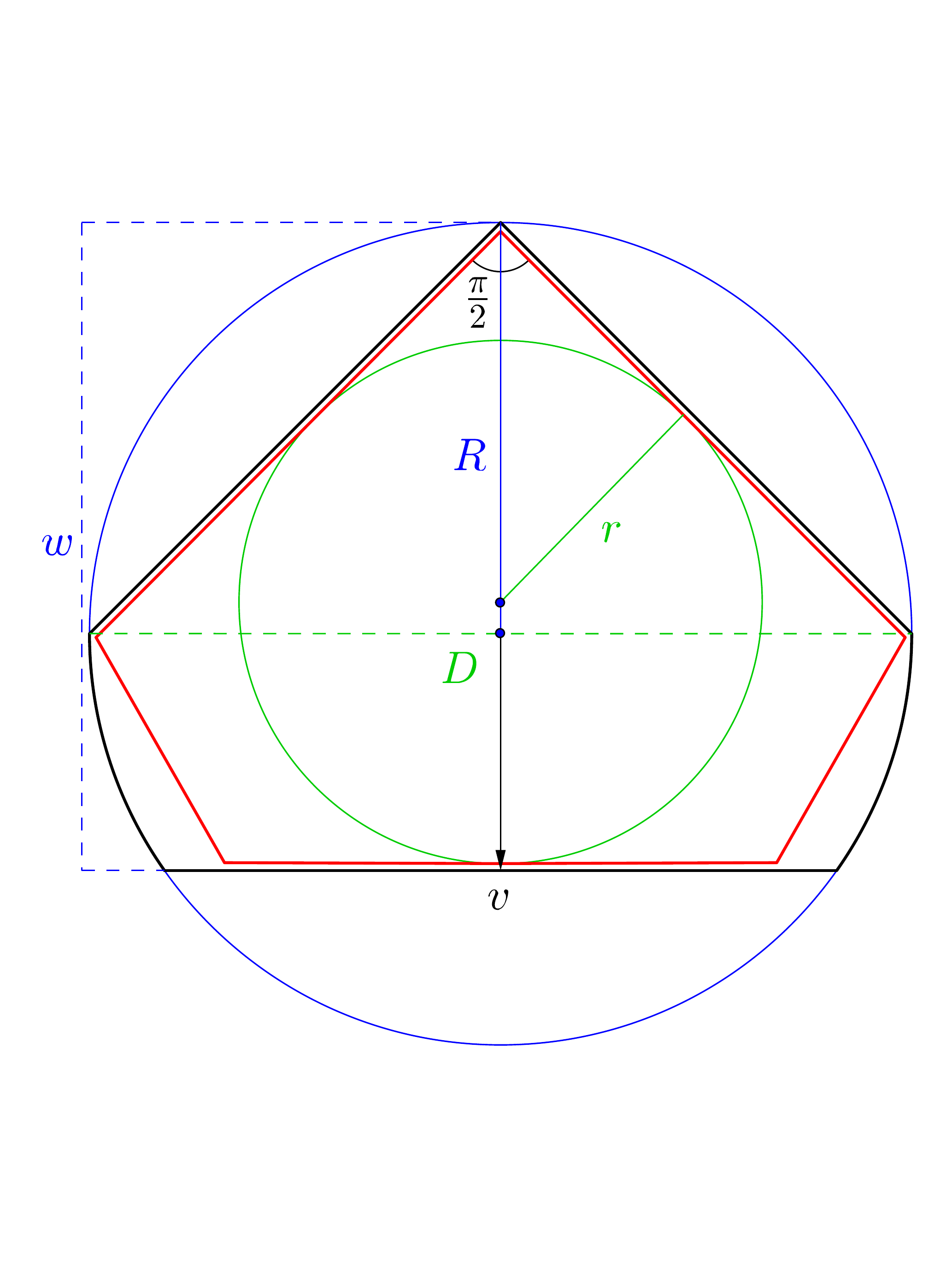}
      \caption{In black $\RSB$, in red the pentagon $\RSB^{\min}$ in case that
          $w(\RSB) > \sqrt2 R(\RSB)$.}
      \label{fig:RSB}
    \end{subfigure}
    \caption{A concentric and a right-angled sailing boat.}
  \end{figure}

  Hence the concentric sailing boats are extreme for the inequalities \eqref{ub_1} and \eqref{eq:sailing boats}.
  Denoting the concentric pentagon built from the vertices of $\CSB$ by
  $\mathrm{CP}^{\circledcirc}_\gamma$, it holds
  $f(K)=f(\CSB)$, iff $\mathrm{CP}^{\circledcirc}_\gamma \subset K \subset \CSB$ (\cf~Figure \ref{fig:CSB}).

\item[$(\RAT,\SB)$] Let $r\in[r(\RAT),r(\SB)]$ and $v\in\R^2$,
  \st~the vertex of $v+(\nicefrac{r}{r(\RAT)})\RAT$ between the two edges of equal length
  belongs to $\S$ and the edges of
  equal length induce equal caps in $\B$
  (\cf~Figure \ref{fig:RSB}).
  Then $\RSB=(v+(\nicefrac{r}{r(\RAT)})\RAT)\cap\B$ is a \cemph{dblue}{right-angled sailing boat}.
  Hence $D(\RSB)=2R(\RSB)$, $r(\RSB)=r(v+(\nicefrac{r}{r(\RAT)})\RAT)=r$, and
  $w(\RSB)=\nicefrac{r}{r(\RAT)}w(\RAT)=(\sqrt{2}+1)r$.
  Thus they are extreme for the inequalities \eqref{eq:sailing boats} and \eqref{ib_1} and it holds  $K \subset \RSB$
  for any set $K$ with $f(K)=f(\RSB)$.
  Concerning possible minimal sets mapped to the same coordinates in the diagram,
    let $p^1,p^2,p^3 \in \S$, \st~$\conv\{p^1,p^2,p^3\} = \RAT$, with the right-angle at $p^3$.
    Now, if $w(\RSB) \le \sqrt{2} R(\RSB)$, the set $\RSB^{\min} :=
    \conv\left(\RAT, (p^3 + w(\RSB)\B)\cap\RSB \right)$ fulfills $\RSB^{\min} \subset K$, for all
    $K$ with $f(K)=f(\RSB)$.
    In case of $w(\RSB) > \sqrt{2} R(\RSB)$ (\ie $r>(2-\sqrt2)R(\RSB)$),
    call $L$ the supporting line
    to the inball in $v$, and let $p^4,p^5 \in L$ at distance $w(\RSB)$
    from the segments $[p^1,p^3]$, $[p^2,p^3]$, respectively. Then the pentagon
    $\RSB^{\min}:=\conv\{p^i,i \in [5]\}$ is a minimal set mapped to the same coordinates as
    $\RSB$. However, one should recognize that $\RSB^{\min}$ is only one (maybe the \enquote{nicest})
    possible choice for such a set (\cf~Figure \ref{fig:RSB}).

\item[$(\EqT,\FRT)$] For any $r\in[r(\EqT),r(\FRT)]$ there exists $c\in\R^2$, \st~$c+r\B$
  is contained in $\FRT$ (by definition of inradius) and tangent to the linear edge of
  $\FRT$ (\cf Figure \ref{fig:BEqONE}).
  Assuming $c$ to be equidistant from the endpoints of that linear edge
  the sets $\BEq=\conv(\EqT,v+r\B)$,
  $r\in[r(\EqT),r(\FRT)]$ are called \cemph{dblue}{bent equilaterals}
  and they satisfy
  $r(\BEq)=r$, $D(\BEq)=\sqrt{3}R(\BEq)$ and $w(\BEq)=w(\EqT)$.
  Thus the bent equilaterals with $r\in[r(\EqT),r(\FRT)]$ are extreme for the inequalities \eqref{lb_2} and \eqref{ib_3}.

  With respect to set inclusion $\BEq$ is a minimal set mapped onto these coordinates.
  However, since there is some freedom in placing $c$, it is not a unique minimal set.

  Choosing two common supporting half-spaces $H_i$, $i=1,2$ of $\BEq$ and its inball,
  \st~$c+ r\B$ is the inball of $\FRT \cap H_1 \cap H_2$,
  one gets a maximal set containing $\BEq$
  (but neither the choice of the half-spaces $H_i$, $i=1,2$ is unique nor is
  the choice of $c$, \cf~Figure \ref{fig:BEqONE}).

  \begin{figure}
    \includegraphics[trim = 1cm 3.5cm 1cm 2cm, width = 7cm]{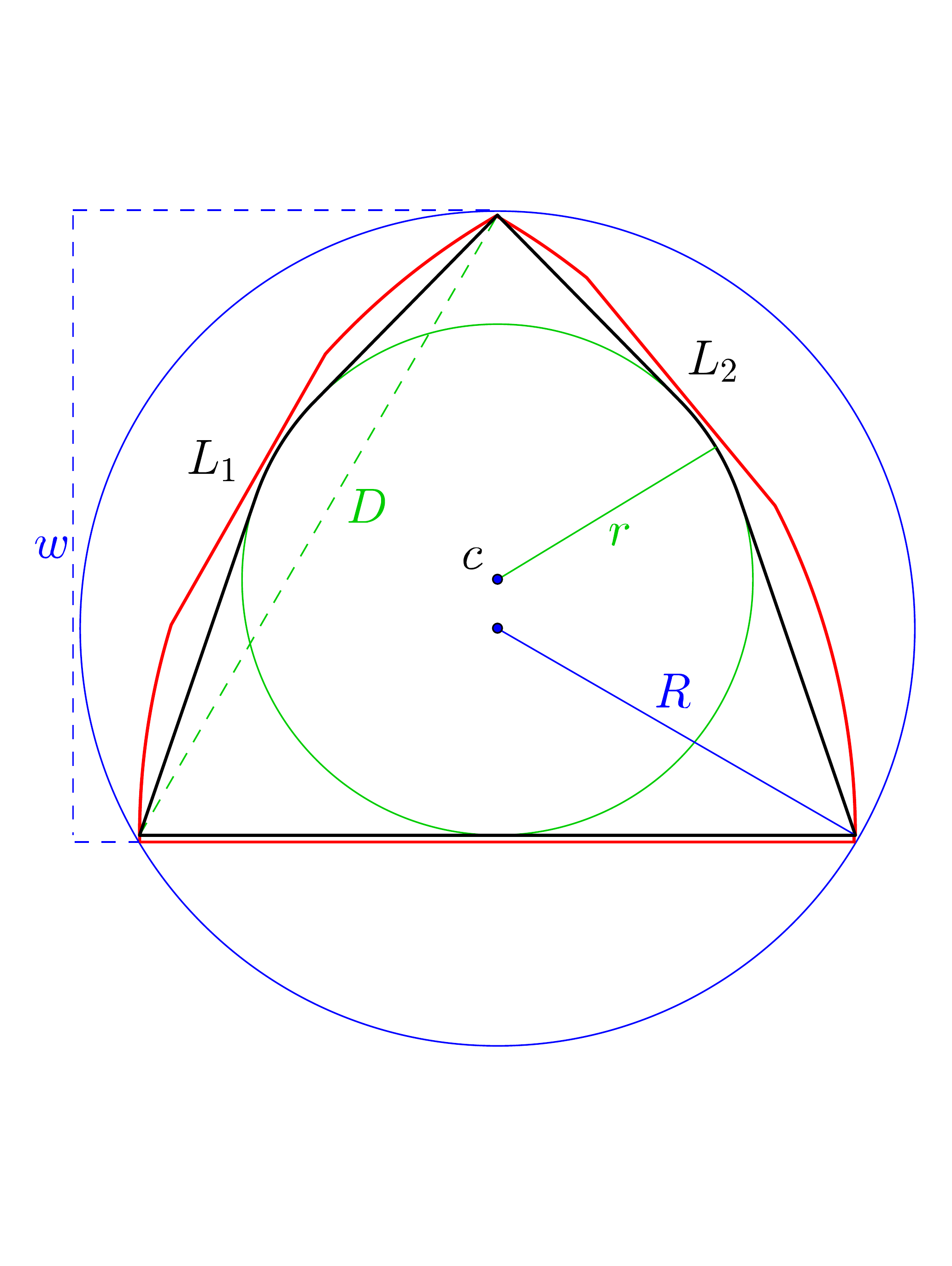}
    \caption{In black a bent equilateral $\BEq$, $r\in[r(\EqT),r(\FRT)]$
      (for which all radii keep constant moving the inball horizontally),
      in red one possible maximal set containing $\BEq$.}
    \label{fig:BEqONE}
  \end{figure}

\item[$(\FRT,\SRT)$] On the contrary, for any $r\in[r(\FRT),r(\SRT)]$, let
  $c\in\R^2$, \st~$c+r\B$ is tangent to the two (non-linear) arcs of $\FRT$
  (see Figure \ref{fig:BEqTWO}). Then we define the
  \cemph{dblue}{maximally-sliced Reuleaux triangle} $\SliRT$
  to be the intersection of $\ReT$ with a halfspace supporting $c+r\B$ and containing
  a vertex of $\FRT$, which is adjacent to its linear edge, on the boundary line of the halfspace.
  Abbreviating $D=D(\SliRT)$ and $R=R(\SliRT)=1$ again, it holds $r(\SliRT)=r$
  and $D = \sqrt{3}R $. Considering the angles
  $\alpha,\beta,\gamma$ inside $\SliRT$ (as given in Figure \ref{fig:BEqTWO}), we have
  \[\text{(i)} \;~ \cos(\alpha)=\nicefrac{D}{2(D-r)}, \quad \text{(ii)} \;~
  \sin(\alpha+\beta) = \nicefrac{r}{(D-r)}, \quad \text{(iii)} \;~ \cos(\gamma)=\nicefrac{w}{D}.
  \]
  Passing (i) into (ii) one obtains
  \[ \beta=\arcsin\left(\frac{r}{D-r}\right)-\arccos\left(\frac{D}{2(D-r)}\right) \]
  and since $\gamma = \nicefrac{\pi}{6}-\beta$ it follows from (iii) for the width $w=w(\SliRT)$
  that
  \[ w = D\cos\left(\frac{\pi}{6}-\arcsin\left(\frac{r}{D-r}\right)
    + \arccos \left(\frac{D}{2(D-r)}\right)\right) \]

  The sliced Reuleaux triangles fulfill inequalities \eqref{eq:3PT} and \eqref{ib_3} with equality. Moreover,
  denoting $\BEq:=\conv(\EqT, c+r\B)$, with $r\in[r(\FRT),r(\SRT)]$ again a
  \cemph{dblue}{bent equilateral}
  it holds $f(K)=f(\SliRT)$, iff $\BEq \subset K \subset \SliRT$ (\cf~Figure \ref{fig:BEqTWO}).

  \begin{figure}
    \centering
    \begin{subfigure}[b]{0.45\textwidth}
      \includegraphics[trim = 1cm 3.5cm 1cm 2cm, width =\textwidth]{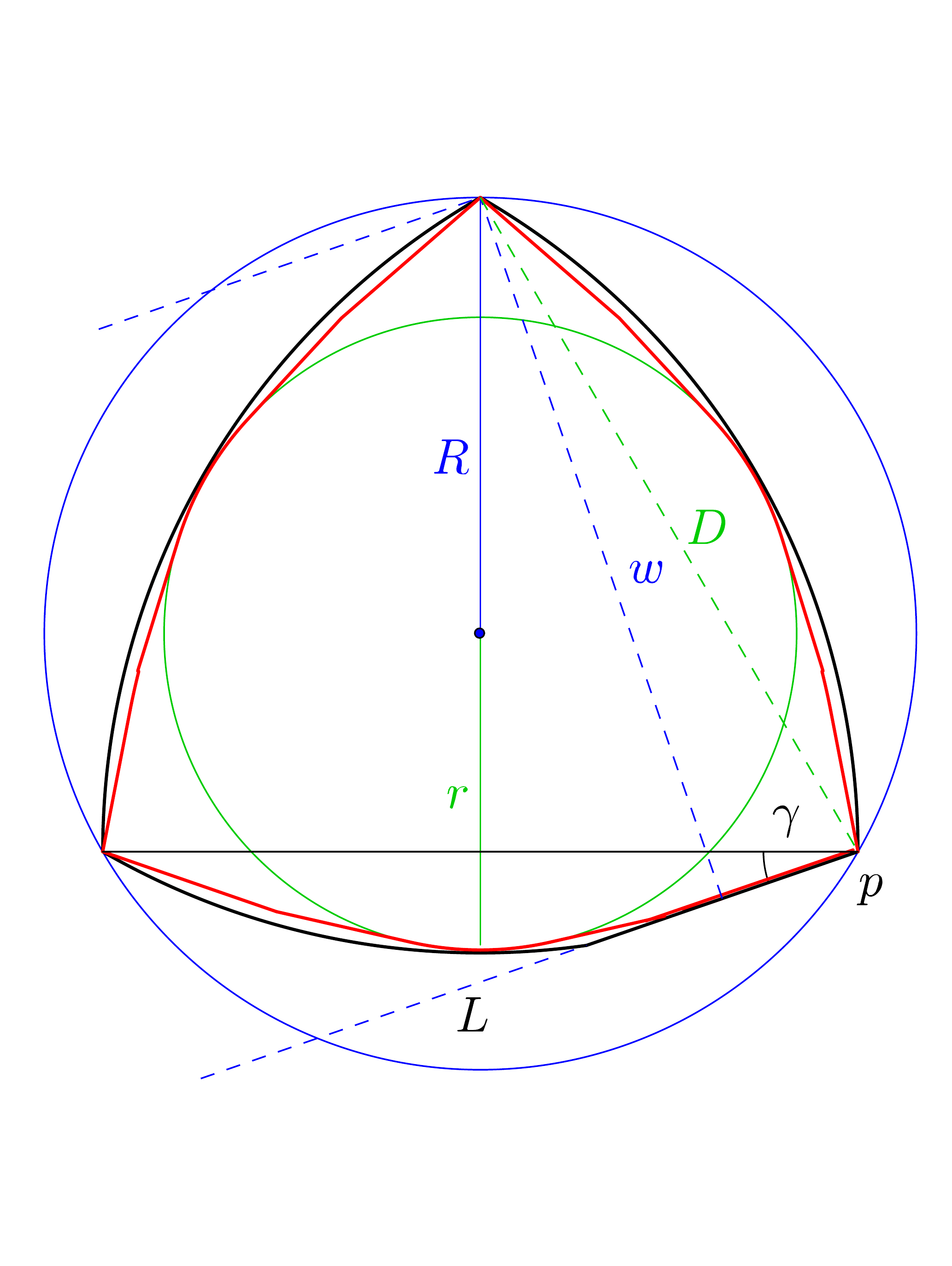}
      \caption{In black a concentric sliced Reuleaux triangle $\CSRT$ and in red
      $\mathrm{BY_{\gamma}}$.\\\hfill}\label{fig:CSRT}
    \end{subfigure} \hfill
    \begin{subfigure}[b]{0.45\textwidth}
      \includegraphics[trim = 1cm 3.5cm 1cm 2cm, width = \textwidth]{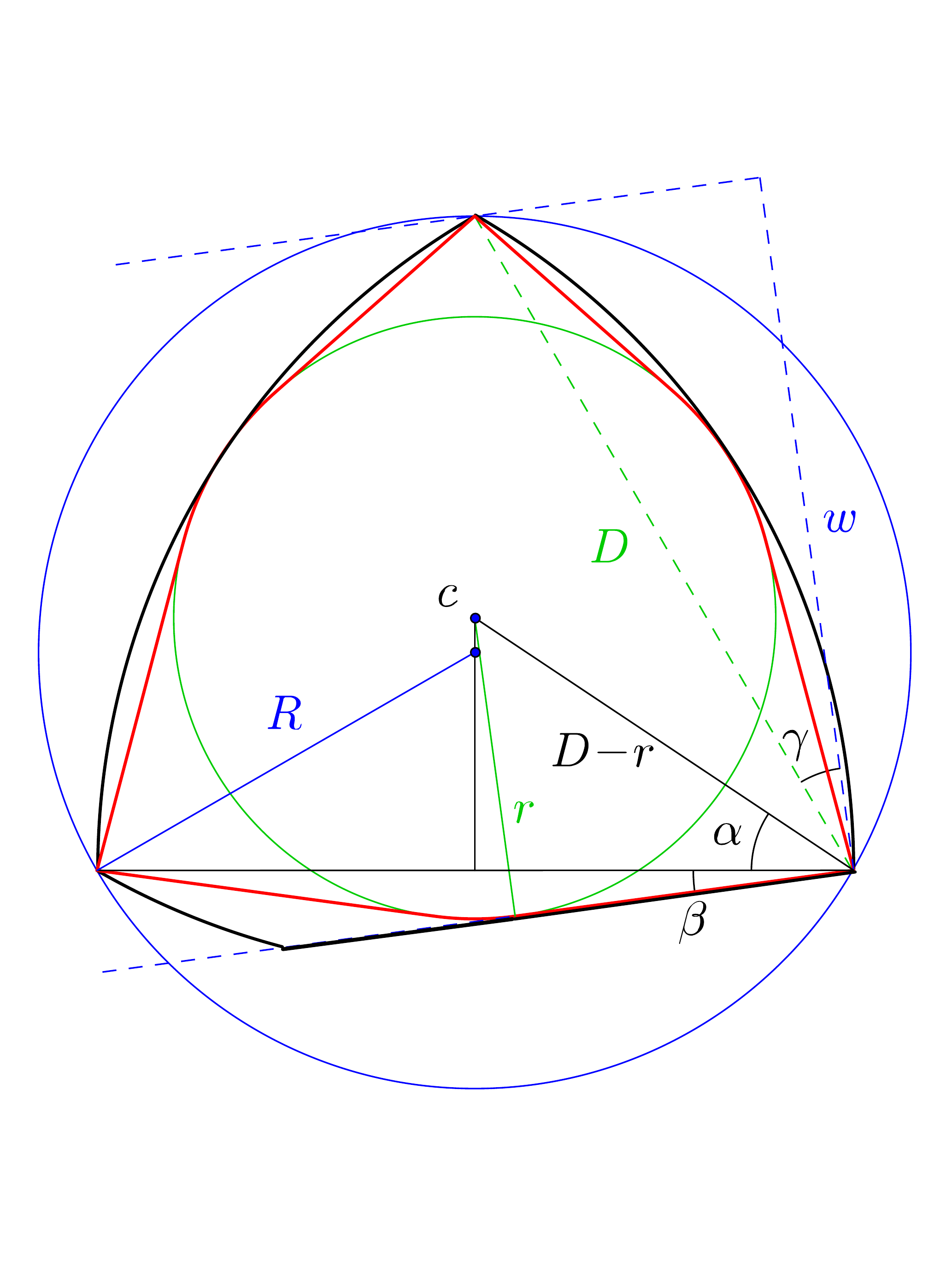}
      \caption{In black a maximally-sliced Reuleaux triangle $\SliRT$,
          in red a bent equilateral $\BEq$, $r\in[r(\FRT),r(\SRT)]$.}
      \label{fig:BEqTWO}
    \end{subfigure}
    \caption{Sliced Reuleaux triangles}
  \end{figure}

\item[$(\SRT,\ReT)$] Let $L$ be a line containing a vertex of $\ReT$, say $p$, not cutting the
  interior of the inball of $\ReT$, and
  $\gamma\in[\arcsin(\sqrt{3}-1)-\nicefrac{\pi}{6},\nicefrac{\pi}{6}]$ the
  angle between $L$ and one of the segments joining $p$ with one of the other two vertices (see Figure \ref{fig:CSRT}).
  The family of \cemph{dblue}{concentric sliced Reuleaux triangles} $\CSRT$
  are obtained from intersecting $\ReT$ with the halfspace induced by $L$.
  The concentric sliced Reuleaux triangles have the same diameter, in-, and circumradius as $\ReT$, while the width
  is attained in the orthogonal direction to the line $L$. Hence
  $w(\CSRT) = D(\ReT) \sin\left(\nicefrac{\pi}{3}+\gamma\right)$.
  Concentric sliced Reuleaux triangles are extreme for the inequalities \eqref{ib_2} and \eqref{ib_3}.

  Denoting the convex hull of $r(\CSRT)\B$ and the Yamanouti set sharing width, diameter and circumradius with $\CSRT$ by
  $\mathrm{BY_{\gamma}}$, then we have $f(K)=f(\CSRT)$, iff $\mathrm{BY_{\gamma}} \subset K \subset \CSRT$ (see Figure \ref{fig:CSRT}).

\item[$(\L,\BT)$] The construction of the sets in this edge is a generalization of that of the bent trapezoid
  $\BT$ in Subsection \ref{ss:vertices}.
  Let $\Iso=\conv\{p^1,p^2,p^3\}$ with $\gamma \in[0,\arcsin(\nicefrac{3}{4})]$, \st~$\gamma$
  is the angle at $p^1$. Moreover let $p^4 \neq p^3$ in the circumsphere
  of $\Iso$, \st~$\conv\{p^1,p^2,p^4\}$ is congruent with $\conv\{p^1,p^2,p^3\}$ and possesses its angle $\gamma$ at $p^2$.
  Substituting the two edges $[p^1,p^4]$ and $[p^2,p^3]$ by two arcs of radius $D(\Iso)$ whose centers are $p^1$
  and $p^2$, respectively, the resulting set is a \cemph{dblue}{(general) bent trapezoid} $\BTrap$,
  $\gamma \in[0,\arcsin(\nicefrac{3}{4})]$ (see Figure \ref{fig:BTrapONE}).
  It holds $D(\BTrap) = D(\Iso) = 2 R(\Iso) \cos\left(\nicefrac{\gamma}{2}\right)$
  and $w(\BTrap) = w(\Iso) = D(\Iso) \sin(\gamma)$ and since they possess two parallel edges touching
  the inball in antipodal points $w(\BTrap)=2r(\BTrap)$.

  The bent trapezoids are extreme for the inequalities \eqref{lb_1} and \eqref{lb_2}.
  While $\BTrap$ is the unique maximal set with respect to set inclusion, which is mapped onto these coordinates in the diagram,
  there does not exist a unique minimal set. Essentially, the convex hull of $\Iso$
  and any of the possible inballs of $\BTrap$
  shares all four radii with $\BTrap$ and is minimal in that sense.

  \begin{figure}
    \centering
    \begin{subfigure}[b]{0.45\textwidth}
      \includegraphics[trim = 1cm 3.5cm 1cm 2cm, width =\textwidth]{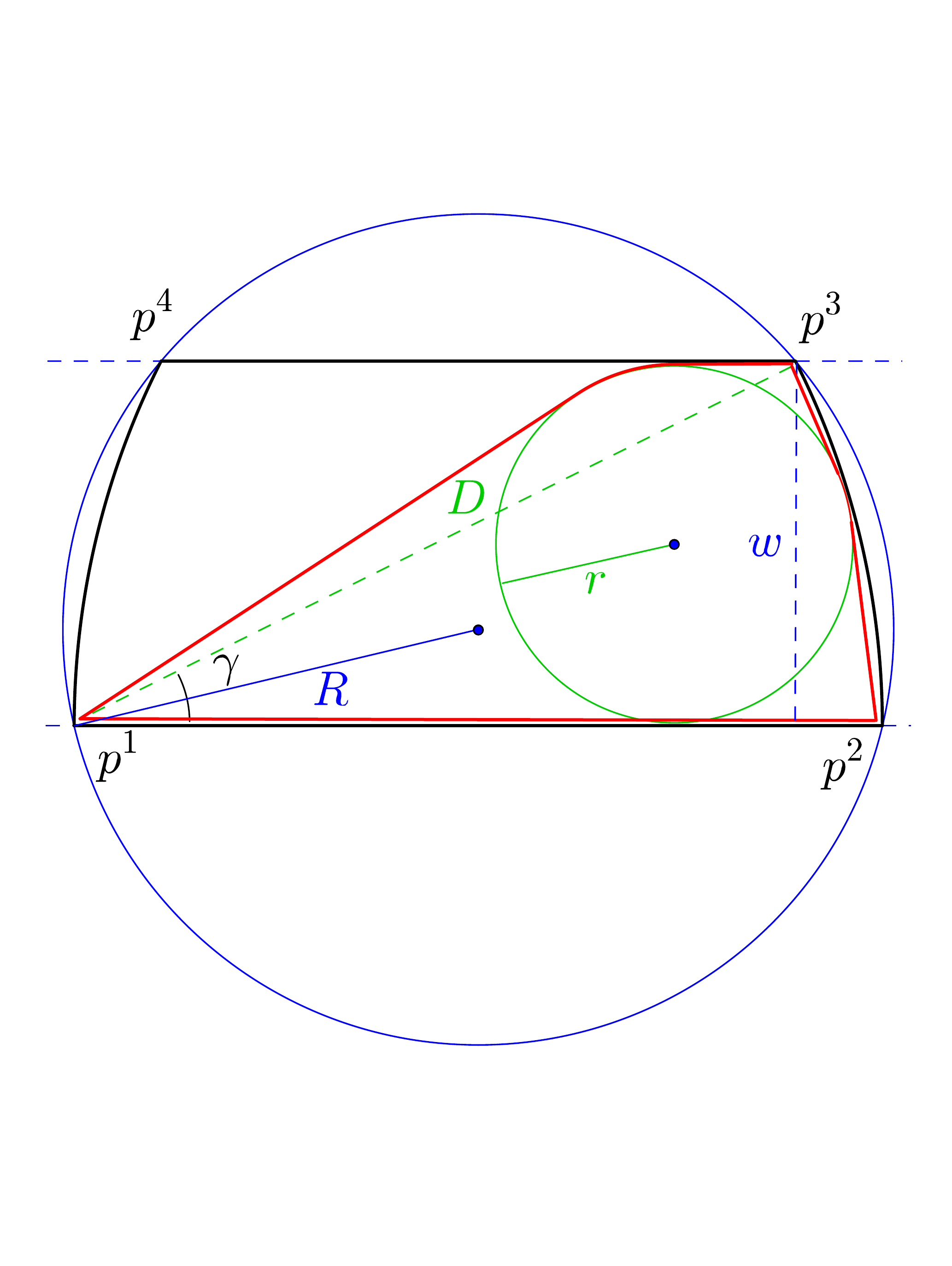}
      \caption{In black $\BTrap$ with $\gamma < \arcsin(\nicefrac{3}{4})$,
        in red a minimal set
        whose inball is tangent to one of the curved edges of $\BTrap$.}
      \label{fig:BTrapONE}
        \end{subfigure} \hfill
    \begin{subfigure}[b]{0.45\textwidth}
      \includegraphics[trim = 1cm 3.5cm 1cm 2cm, width = \textwidth]{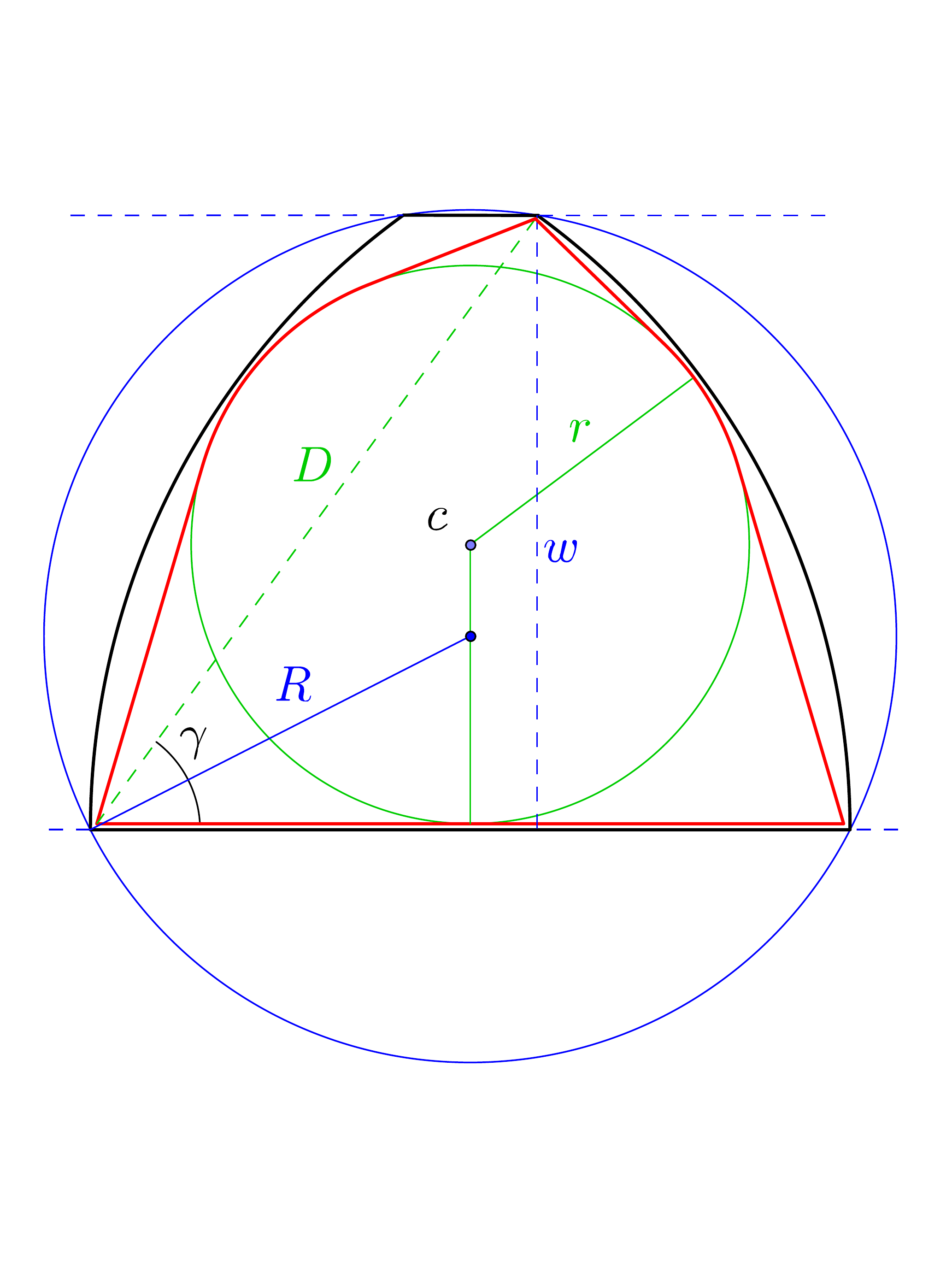}
      \caption{In black $\BTrap$ with $\gamma > \arcsin(\nicefrac{3}{4})$,
        in red an according bent isosceles.\\\hfill}
      \label{fig:BTrapTWO}
    \end{subfigure}
    \caption{Bent trapezoids.}
  \end{figure}

\item[$(\BT,\FRT)$] Adopting the construction of the bent trapezoids with
  $\gamma\in[0,\arcsin(\nicefrac{3}{4})]$ above, we define the \cemph{dblue}{(general) bent trapezoid}
  $\BTrap$ with $\gamma\in[\arcsin(\nicefrac{3}{4}),\nicefrac{\pi}{3}]$ (see Figure \ref{fig:BTrapONE}).
  They keep $D(\BTrap) = D(\Iso) = 2 R(\Iso) \cos\left(\nicefrac{\gamma}{2}\right)$,
  and $w(\BTrap)=w(\Iso) = D(\Iso) \sin(\gamma)$, but in difference to the bent trapezoids before, those with
  $\gamma > \arcsin(\nicefrac{3}{4})$
  have an inball touching the two arcs of
  circumference and only the longer of the parallels. Thus it holds
  $\nicefrac 14 \, D(\BTrap)^2 + r(\BTrap)^2 = (D(\BTrap)-r(\BTrap))^2$
  from which we obtain $\nicefrac34 \, D(\BTrap)^2 - 2D(\BTrap)r(\BTrap) = 0$ or
  $8r(\BTrap)=3D(\BTrap)$.
  Hence, the sets $\BTrap$, $\gamma\in[\arcsin(\nicefrac{3}{4}),\nicefrac{\pi}{3}]$,
  fulfill inequalities \eqref{lb_2} and \eqref{eq:3PT} with equality.

  While $\BTrap$ is the unique maximal set with respect to set inclusion,
  the bent isosceles given by the convex hull of one of the two possible copies of
    $\Iso$ inside $\BTrap$ and the inball of $\BTrap$ is a minimal set with respect to set inclusion
    mapped to the same coordinates,
    which is unique (up to mirroring along the symmetry axis of $\BTrap$,
    \cf~Figure \ref{fig:BTrapTWO}).

\item[$(\SRT,\H)$] Now, we generalize the hood $\H$ as constructed
  in Subsection \ref{ss:vertices}:
  For any $\gamma \in\left[2\arcsin\left(\nicefrac{r(\H)}{D(\H)}\right),\nicefrac{\pi}{3}\right]$
  let $\Iso=\conv\{p^1,p^2,p^3\}$ with $D(\Iso) = \norm[p^1-p^2]=\norm[p^1-p^3]$
  and define the space contained between
  \begin{itemize}
  \item the arcs with centers in the vertices of $\Iso$ and radius $D(\Iso)$,
  \item a line $L$ through $p^2$ supporting the ball $\left(D(\Iso)-R(\Iso)\right)\B$ and
    the smaller angle $\beta$ between it and $[p^1,p^2]$, as well as
  \item the parallel line $L'$ to $L$ supporting $\Iso$ in $p^3$,
  \end{itemize}
  as the \cemph{dblue}{(general) hood} $\Hood$ (see Figure \ref{fig:Hood}).

  One can easily see that
  $D(\Hood) = D(\Iso)=2R(\Iso)\cos\left(\nicefrac{\gamma}{2}\right)$ and
  $r(\Hood) =  D(\Hood)-R(\Hood)$.

  Observing that the angle between $[p^1,p^2]$ and $[0,p^2]$ in $p^2$ is $\nicefrac{\gamma}{2}$
  let $\alpha$ be the angle between $[p^2,p^3]$ and the perpendicular of $L$
  and $\beta = \nicefrac{\gamma}{2} - \alpha$ the angle between $[p^1,p^2]$ and $L$, both in $p^2$.
  Then, omitting the argument $\Hood$, we get
  \begin{equation*}
    \begin{split}
      & \text{(i)} \quad D = 2 R \cos(\nicefrac{\gamma}{2}), \qquad
      \text{(ii)} \quad r = R \sin(\nicefrac{\gamma}{2}+\beta) = R \sin(\gamma-\alpha) \\
      & \text{(iii)} \quad w = \norm[p^2-p^3] \cos(\alpha) = 2 D \sin(\nicefrac{\gamma}{2}) \cos(\alpha).
    \end{split}
   \end{equation*}
  From (i) and (ii) one
  immediately obtains
  $\nicefrac{\gamma}{2}=\arccos\left(\nicefrac{D}{2R}\right)$ and
  $\alpha=\gamma-\arcsin\left(\nicefrac{r}{R}\right)$. Thus (iii)
  can be rewritten as
  \begin{equation*}
    \begin{split}
      w & = 2 D \sin\left(\arccos\left(\nicefrac{D}{2R}\right)\right)
      \cos\left(\gamma-\arcsin\left(\nicefrac{r}{R}\right)\right)\\
      & = 2 D \sqrt{1-\left(\frac{D}{2R}\right)^2}
      \cos\left(2 \arccos\left(\frac{D}{2(D-r)}\right)-\arcsin\left(\frac{r}{D-r}\right)\right).
    \end{split}
  \end{equation*}

  The hoods $\Hood$ with $\gamma \in \left[2\arcsin\left(\nicefrac{r(\H)}{D(\H)}\right),\nicefrac{\pi}{3}\right]$,
  are extreme for the inequalities \eqref{eq:3PT} and  \eqref{ib_2}.
  While $\Hood$ is maximal with respect to set inclusion,
  the bent isosceles $\conv(\Iso, (D(\Hood)-R(\Hood))\B)$
  is minimal sharing all radii with $\Hood$ (see Figure \ref{fig:Hood}).
  \begin{figure}
    \centering
    \begin{subfigure}[b]{0.45\textwidth}
      \includegraphics[trim = 1cm 3.5cm 1cm 2cm, width =\textwidth]{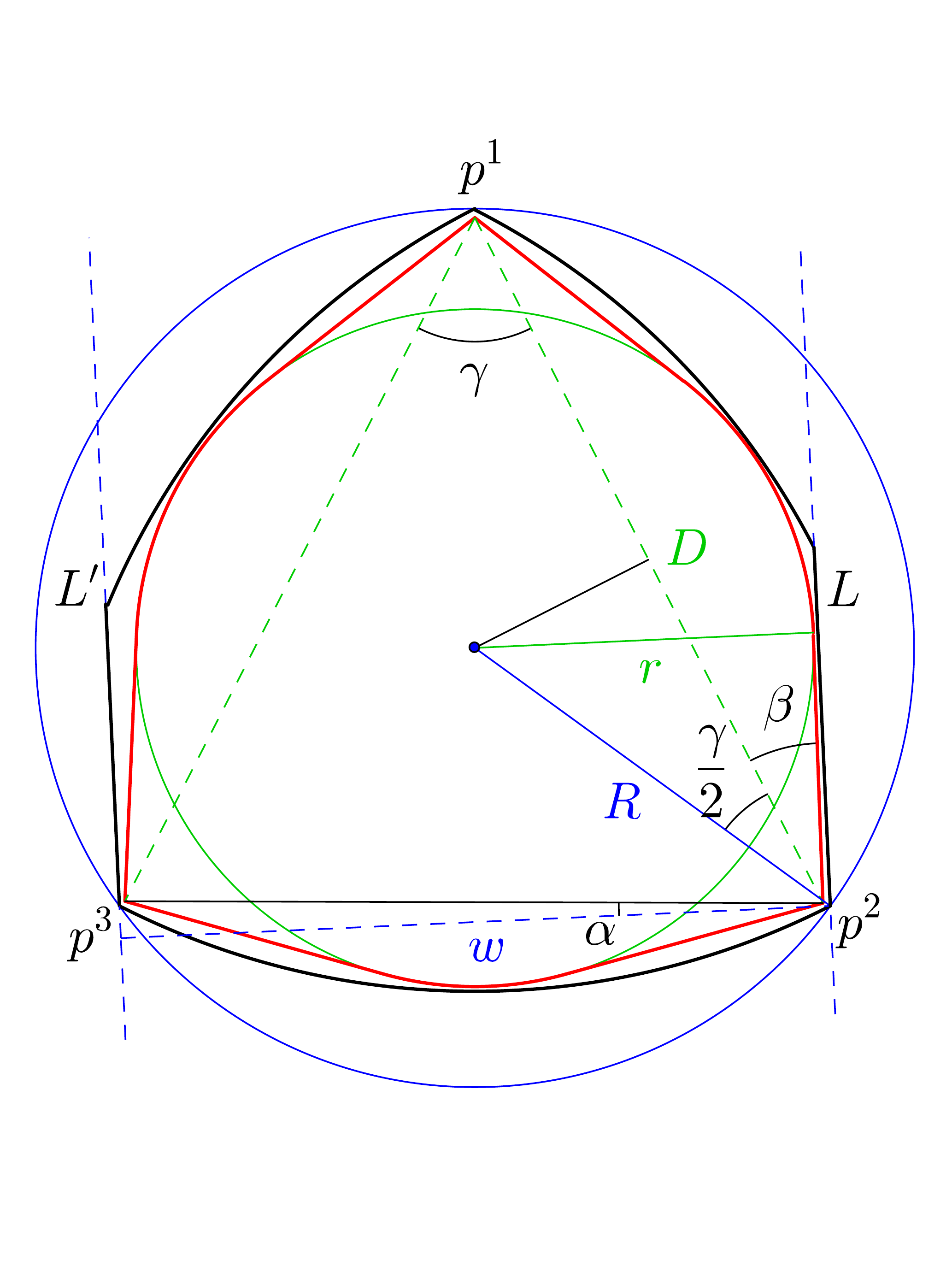}
      \caption{In black a general hood $\Hood$, in red a bent isosceles.}
      \label{fig:Hood}
    \end{subfigure} \hfill
    \begin{subfigure}[b]{0.45\textwidth}
      \includegraphics[trim = 1cm 3.5cm 1cm 2cm, width = \textwidth]{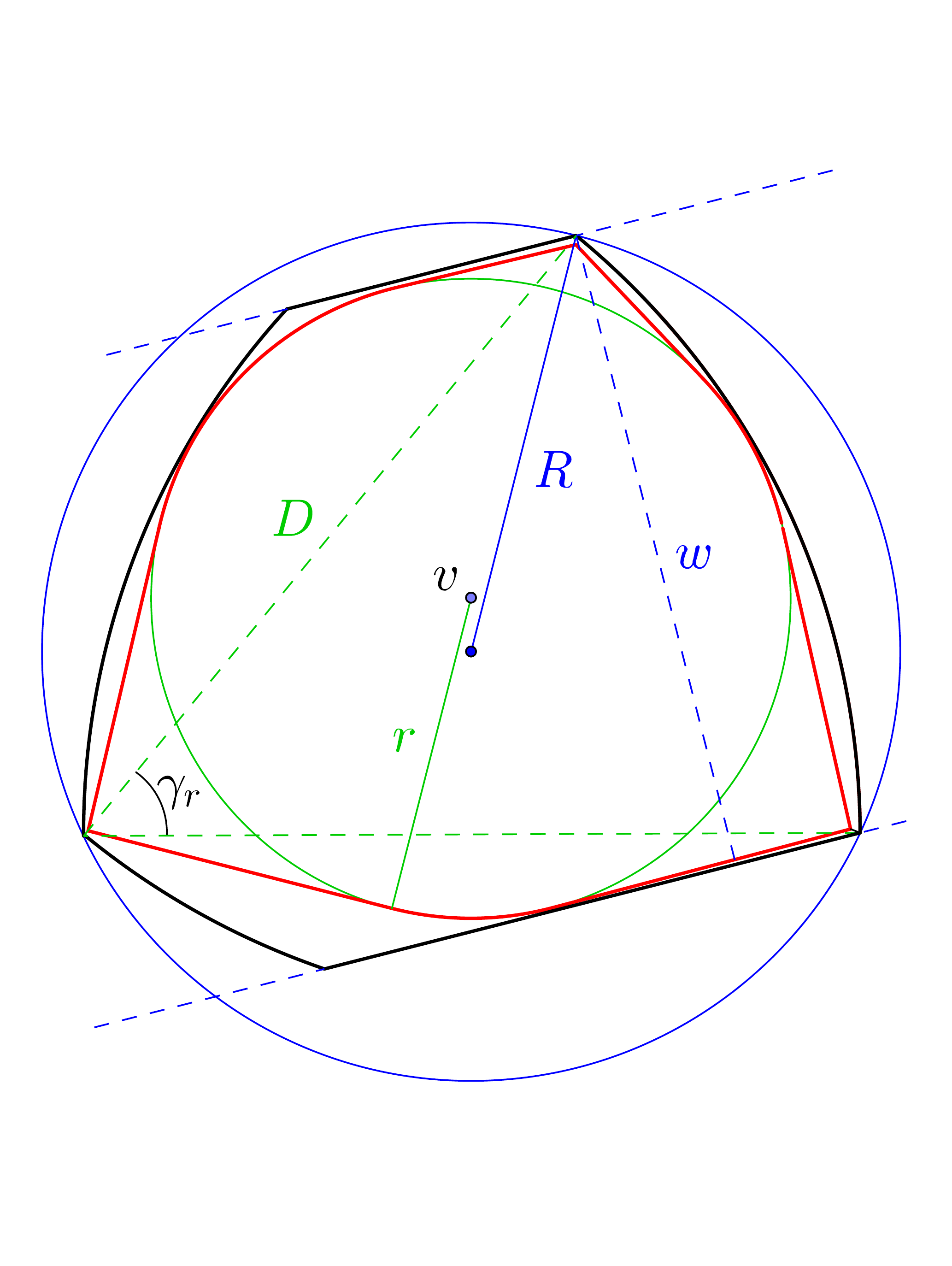}
      \caption{In black a bent pentagon $\BPen[r][\gamma_r]$
          and in red a bent isosceles
            $\BIso[r][\gamma_r]$, $r \in [r(\BT),r(\H)]$.}
      \label{fig:IRRBIso}
    \end{subfigure}
    \caption{Sets from the two edges meeting in $\H$ and bounding \eqref{eq:3PT}.}
  \end{figure}

\item[$(\BT,\H)$] Let $r\in[r(\BT),r(\H)]$ and $\gamma_r$ the maximal $\gamma \in[0,\nicefrac{\pi}{3}]$,
  \st~we can find $c\in \R^2$ for which
  \begin{enumerate}[(i)]
  \item $c+r\B$ is tangent to the two arcs of circumference 
    with centers $p^1$ and $p^2$ and radius $D(\Iso[\gamma_r])$
    above the segments $[p^2,p^3]$ and $[p^1,p^3]$, respectively, as well as
  \item two parallel lines $L$ and $L'$ both supporting $c+r\B$, support $\Iso[\gamma_r]$
    in, respectively, $p^2$ and $p^3$  (\cf~Figure \ref{fig:IRRBIso}).
  \end{enumerate}

  The \cemph{dblue}{bent pentagon} $\BPen[r][\gamma_r]$ is
  defined as the space contained between the lines $L, L'$ and the
  three arcs with radius $D(\Iso[\gamma_r])$ around the vertices of $\Iso[\gamma_r]$.
  They satisfy
  $D(\BPen[r][\gamma_r])=D(\Iso[\gamma_r])=2R(\Iso[\gamma_r])\cos(\nicefrac{\gamma_r}{2})$,
  $r(\BPen[r][\gamma_r])=r$, and $w(\BPen[r][\gamma_r])=2r(\BPen[r][\gamma_r])$ and are
  extreme for the inequalities \eqref{lb_1} and \eqref{eq:3PT}.

  Defining the bent isosceles $\BIso[r][\gamma_r]:=\conv(\Iso[\gamma_r], c+r\B)$
  (as we will do for $(lb_2)$), we obtain $f(K)=f(\BPen[r][\gamma_r])$, iff
  $\BIso[r][\gamma_r] \subset K \subset \BPen[r][\gamma_r]$.
\end{itemize}

\subsection{Facets of the diagram}\label{ss:facets}

In this section families of sets $\CK^2$ are described for each of the inequalities stated in Section \ref{s:main ineq},
\st~for every point $x \in[0,1]^3$ in the induced facet,
there exists a set $K_x$ within the family with $f(K_x)=x$.

\begin{itemize}
\item[$(lb_1)$] Due to Lemma \ref{lem:starshaped}, all
  outer parallel bodies $K$ of the bent trapezoids $\BTrap$
    of the $(\L,\BT)$-edge
  and the bent pentagons $\BPen$
    of the $(\BT,\H)$-edge
  fulfill the equation \[2r(K) = w(K).\]
  This means \eqref{lb_1} induces a linear facet of the diagram, which is bounded by the edges
  $(\L,\BT)$ (bent trapezoids with $\gamma \le \nicefrac{3}{4}$), $(\BT,\H)$
  (bent pentagons),
  $(\L,\B)$ (sausages) and $(\H,\B)$
  (rounded hoods).

\item[$(ib_1)$] If $K$ is an outer parallel body of a right-angled triangle $\RecT$
  or a right-angled sailing-boat $\RSB$ as described in Section \ref{ss:edges},
  then Lemma \ref{lem:starshaped} ensures
  \[D(K)=2R(K),\]
  which means equality in \eqref{ib_1}. Hence it is a linear facet of the diagram
  bounded by the edges $(\L,\B)$ (sausages),
  $(\L,\RAT)$ (right-angled triangles),
  $(\RAT,\SB)$ (right-angled sailing-boats), and $(\SB,\B)$ (rounded sailing boats).

\medskip

To both facets, $(lb_1)$ and $(ib_1)$, much more sets are mapped. Remember that, \eg, all symmetric
  sets are mapped to the edge obtained from the intersection of the two facets.

\medskip

\item[$(ub_1)$] \label{ub1-facet}
 Because of Lemma \ref{lem:starshaped} and Lemma \ref{lem:CompletionsW=r+R}
 any outer parallel body $K$ of a Reuleaux blossom $\ReB$ or a concentric sailing boat $\CSB$,
 as well as any of the sets $K_\lambda:=\lambda K+(1-\lambda)C_K$, $\lambda\in[0,1]$,
   where $C_K$ is a Scott-completion of a concentric sailing boat $\CSB$ fulfills $w(K)=r(K)+R(K)$.
 Thus \eqref{ub_1} defines a linear facet of the diagram, bounded by the edges $(\EqT,\ReT)$
 (Reuleaux blossoms) and $(\ReT,\B)$ (rounded Reuleaux triangles),
 as well as $(\EqT,\SB)$ (concentric sailing boats) and $(\SB,\B)$ (rounded sailing boats).

\item[$(ib_2)$] Lemma \ref{lem:starshaped} ensures that any outer parallel body $K$ of a
  general hood $\Hood$ or a concentric sliced Reuleaux triangle $\CSRT$ fulfills
  \[D(K)=r(K)+R(K),\]
  filling the linear facet from the star-shapedness with respect to $\B$.
  Moreover, the sets $K_\lambda:=\lambda K+(1-\lambda)C_K$, $\lambda\in[0,1]$,
  where $K$ is a set from the edges $(\SRT,\H)$ or $(\H,\B)$ and
  $C_K$ its Scott-completion, all fulfill \[D(K)=r(K)+R(K)\] too, filling the facet in horizontal
  lines with respect to the inradius-axis.

  Hence \eqref{ib_2} induces the fourth linear facet. Its boundary edges are  $(\SRT,\H)$ (general hoods),
  $(\H,\B)$ (rounded hoods), $(\SRT,\ReT)$ (concentric sliced Reuleaux triangles),
  and $(\ReT,\B)$ (rounded Reuleaux triangles).

\item[$(ib_3)$] As shown in \cite{Ju} a set $K \in \CK^2$ fulfills
  \[D(K)=\sqrt{3}R(K),\]
  iff $K$ contains an equilateral triangle $\EqT$ of the same circumradius.
  Since $\ReT$ is the unique Scott-completion of $\EqT$,
  we obtain $\EqT\subseteq K\subseteq\ReT$.

  Consider a Reuleaux blossom $\ReB=2r\EqT\cap\ReT$ with $r\in[r(\EqT),r(\ReT)]$.
  We describe a continuous transformation of $\ReB$, keeping
  its inradius, diameter, and circumradius constant and decreasing its width until
  it becomes a set from the edge $(\EqT,\FRT)$ or $(\FRT,\SRT)$.
  Let $p^i$, $i=1,2,3$, \st~$\EqT=\conv\{p^1,p^2,p^3\}$.
  While the transformation ending in the sets from the edge $(\EqT,\FRT)$
  can be done within one step (Step (i) below)
  the transformation of the sets which should approach the edge $(\FRT,\SRT)$
  must be done in two steps (Step (i) and (ii) below):
  \begin{enumerate}[(i)]
  \item We translate $2r\EqT$ in direction of $p^1$,
    until either its inball becomes tangent to $[p^2,p^3]$ (when $r(\EqT)\leq r\leq r(\FRT)$)
    or tangent to both
    arcs of $\ReT$ intersecting in $p^1$ (when $r(\FRT)\leq r\leq r(\SRT)$, see Figure \ref{fig:ib3ONE}).
    We define the \cemph{dblue}{(non-concentric) Reuleaux blossom} by $\ReB[r,v]=(v+2r\EqT)\cap\ReT$,
    where $v$ is a
    point on the segment $[0,tp^1]$ with $0 \le t <1$ chosen, \st~in case of $v=tp^1$ one
    of the two stopping reasons for the translation is reached (\cf~Figure \ref{fig:ib3ONE}).

    Observe that when $r(\EqT)\leq r\leq r(\FRT)$ all radii of $\ReB[r,tp^1]$ coincide with the ones
    of a bent equilateral $\BEq$ (\cf~Figure \ref{fig:BEqONE}),
    which means that we finished the transformation.

  \item In case of $r(\FRT)\leq r\leq r(\SRT)$ we further need to reduce the width.
    However, since the tangent lines to the inball do not support the diameter arcs of
    $\ReT$ intersecting in $p^1$, we first
    \enquote{fill} the space between $\ReB[r,tp^1]$ and these arcs, keeping all radii constant,
    but obtaining a maximal set.
    Afterwards let $L$
    be a line containing $p^2$ and cutting the extreme Reuleaux blossom $\ReB[r,tp^1]$,
    \st~the distance of $p^1$ and $L$ is the same as the width of $\ReB[r,tp^1]$.
    Then we rotate $L$ continuously until it becomes tangent to the inball of $\ReB[r,tp^1]$
    (see Figure \ref{fig:ib3TWO}).
    Denoting the halfspace induced by $L$ containing the inball $L^-$, we define the
    \cemph{dblue}{general sliced Reuleaux triangle} as
    $\Sliced= \ReB[r,tp^1] \cap L^-$.
    Finally, when $L^-$ becomes tangent to the inball, we need to \enquote{fill} again, this time
    all the space of $\ReT$ inside $L^-$.
    Observe that in that moment the general sliced Reuleaux triangle reaches the edge $(\FRT,\SRT)$
    becoming a maximally sliced Reuleaux triangle
    and that starting with the Reuleaux triangle the general sliced Reuleaux triangles
    get concentric ones and approach $\SRT$.
  \end{enumerate}
  Observe that in that moment the general sliced Reuleaux triangle reaches the edge $(\FRT,\SRT)$
  becoming a maximally sliced Reuleaux triangle
  and that starting with the Reuleaux triangle the general sliced Reuleaux triangles
  get concentric ones and approach $\SRT$.

  Both, non-concentric Reuleaux blossoms and general sliced Reuleaux triangles are maximal sets
  with respect to set inclusion.
  The \emph{corresponding} minimal sets are the convex hull
  of $\conv(\EqT, v+r\B)$  with the intersection of the three balls with radius $w(\ReB[r,v])$ or $w(\Sliced)$,
  depending if we are in case (i) or (ii), around the vertices of $\EqT$.

  \begin{figure}
    \centering
    \begin{subfigure}[b]{0.45\textwidth}
      \hspace{-0.5cm}\includegraphics[trim = 1cm 4cm 1cm 2.5cm, width =1.1\textwidth]{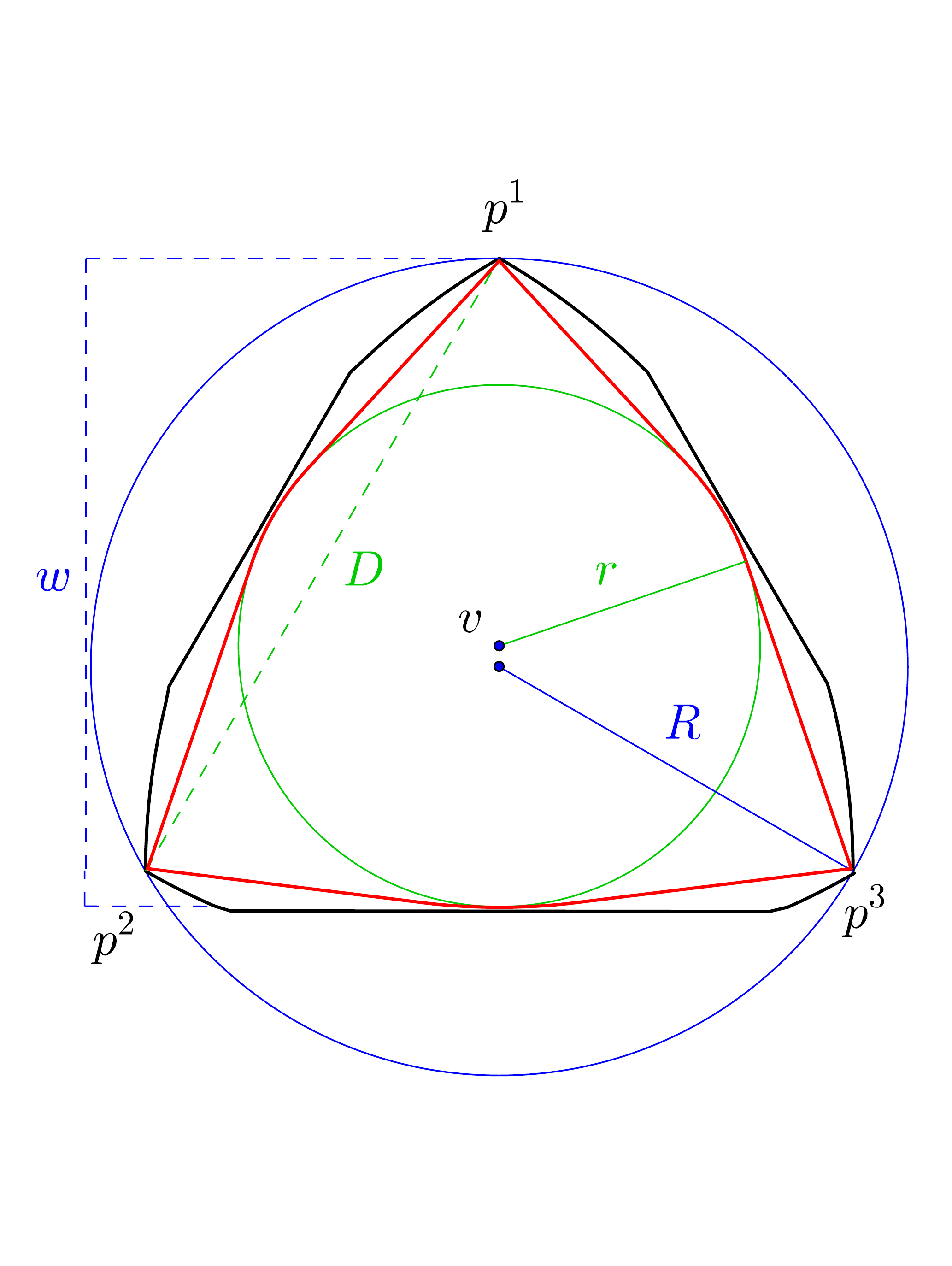}
      \caption{In black a non concentric Reuleaux blossom and in red the corresponding minimal set.}
      \label{fig:ib3ONE}
    \end{subfigure} \hfill
    \begin{subfigure}[b]{0.45\textwidth}
      \hspace{-0.5cm}\includegraphics[trim = 1cm 4cm 1cm 2.5cm, width = 1.1\textwidth]{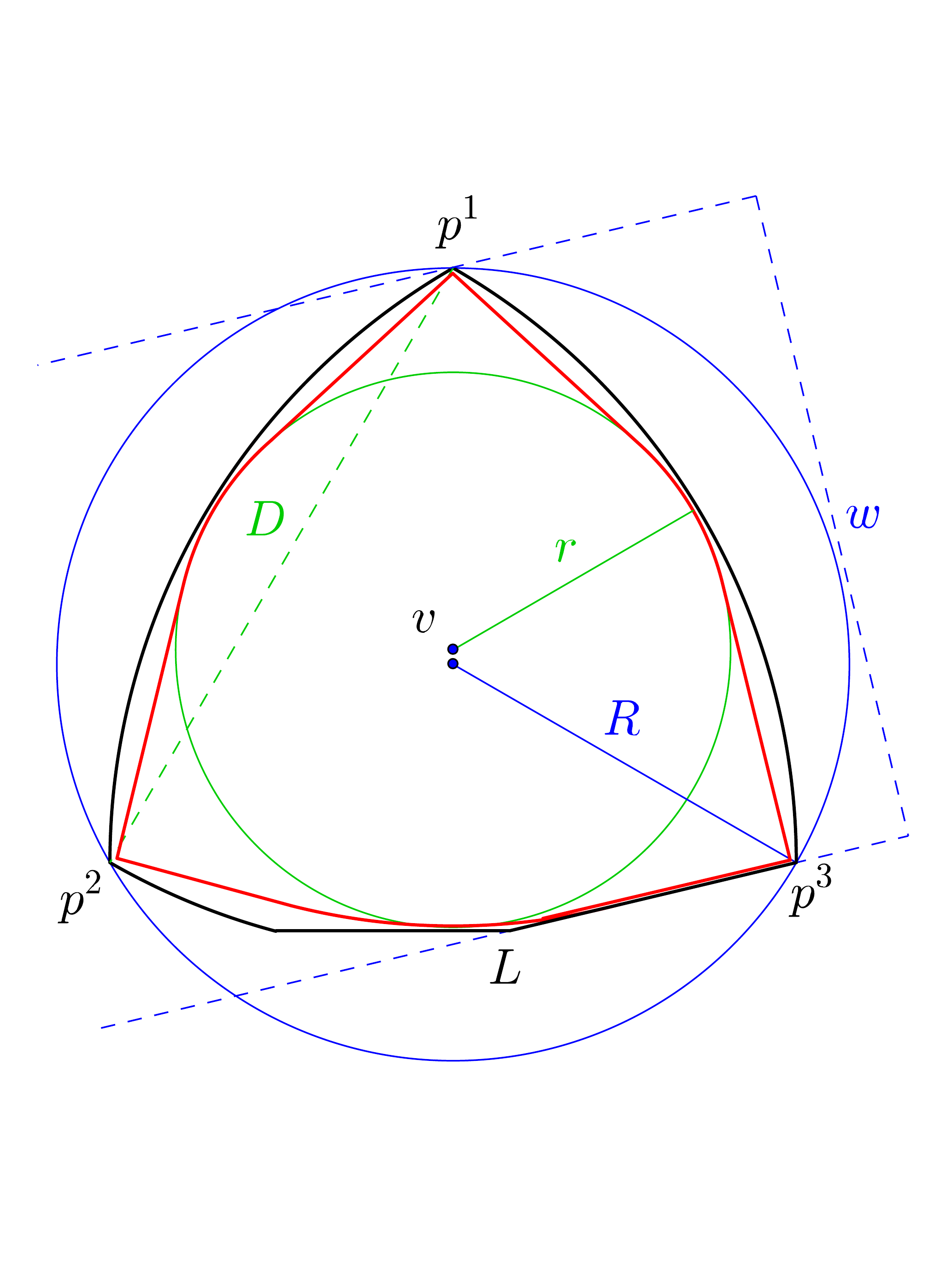}
      \caption{In black a sliced Reuleaux triangle and the corresponding minimal set in red.\\\hfill}
      \label{fig:ib3TWO}
    \end{subfigure}
    \caption{Examples for the sets, which are maped onto $(ib_3)$, corresponding to the cases
        (i) and (ii) in the description.}
  \end{figure}

\item[$(lb_2)$] It was shown in \cite{HCS} that every isosceles $\Iso$, $\gamma\in[0,\nicefrac{\pi}{3}]$,
  fulfills
  \[(4R(K)^2-D(K)^2)D(K)^4 =  4w(K)^2R(K)^4\]
  with equality. But as already described in \cite{Br} they are not the only ones.
  Since $r$ does not appear in this inequality any superset of an isosceles $\Iso$
  keeping the same circumradius, diameter, and width is mapped to the same facet. This is
  true, \eg for all bent trapezoids $\BTrap$ on the edges $[\L,\BT]$ and $[\BT,\FRT]$
  and surely also for any minimal version $\conv(\Iso, c_\gamma + r(\BTrap)\B)$, where $c_\gamma$
  denotes an incenter of $\BTrap$.
  Thus choosing any $r \in [r(\Iso),r(\BTrap)]$ and an appropriate incenter $c$
  (which in many cases will not be unique, as the centers $c_\gamma$ of $\BT_\gamma$
  where not always unique) the sets $\conv\left(\Iso, c+r\B\right)$
  would have inradius $r$ and the same circumradius, diameter, and width
  than $\Iso$ and $\BTrap$ (see Figure \ref{fig:lb2}). Hence the facet induced by \eqref{lb_2}
  is filled by those sets and bounded by the edges $(\L,\EqT)$
  (isosceles triangles with $\gamma \in[0,\nicefrac{\pi}{3}]$),
  $(\L,\BT)$,  and $(\BT,\FRT)$  (both kinds of bent trapezoids), as well as $(\EqT,\FRT)$
  (bent equilaterals with the inball being tangent to an edge of $\EqT$).

  Finally, one should recognize that for any fixed center $c$
  the sets $\conv\left(\Iso, c+r\B\right)$
  are minimal sets with respect to set inclusion mapped to these coordinates in the diagram
  and are constructed in the same way than the bent isosceles in $(lb_3)$ below.

  \begin{figure}
    \begin{center}
      \includegraphics[trim = 0cm 3cm 0cm 3cm, width=8cm]{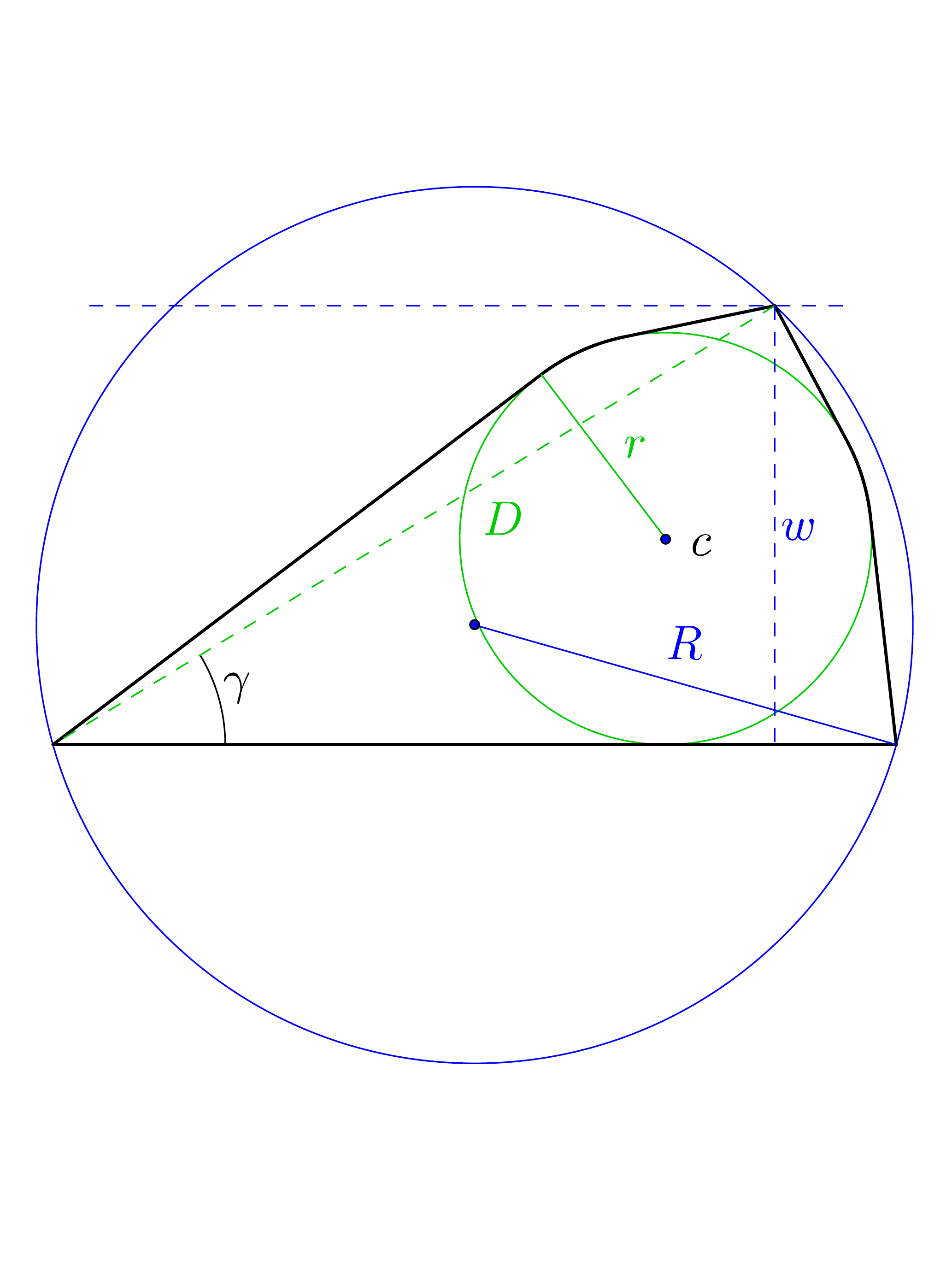}
      \caption{An example of a minimal set from $(lb_2)$.}\label{fig:lb2}
    \end{center}
  \end{figure}

\item[$(lb_3)$]
  For any $r\in[0,1]$ and $\gamma\in[0,\nicefrac{\pi}{3}]$ let
  \begin{enumerate}[(i)]
  \item $p^1,p^2,p^3 \in \S$, \st~$\Iso =\conv\{p^1,p^2,p^3\}$ with $D(\Iso) = \norm[p^1-p^2]=\norm[p^1-p^3]$,
  \item $c\in\R^2$, \st~the ball $c+r\B$ is tangent to the two arcs with centers $p^1,p^2$ and radius $D(\Iso)$,
  \item $L_1$ be the one of the two lines containing $p^2$ and supporting $c+r\B$ having the smaller angle
    with $[p^1,p^2]$ and
  \item $L_2$ be the parallel line of $L_1$ passing through $p^3$.
  \end{enumerate}
  Then a \cemph{dblue}{generalized bent pentagon} $\BPen$ is defined as the space contained between
  $L_1,L_2$ and the arcs of radius $D(\Iso)$ around the centers $p^1,p^2$, and $p^3$
  (see Figure \ref{fig:gbp}).

  If we can ensure that $\Iso \subset \BPen$, that $c+r\B$ is the inball of $\BPen$,
  and that $w(\BPen)=d(L_1,L_2)$, we simply call it a \cemph{dblue}{bent pentagon}
  (see Figure \ref{fig:lb_3}).

  \begin{figure}
    \centering
    \begin{subfigure}[b]{0.45\textwidth}
      \hspace{-0.5cm}\includegraphics[trim = 1cm 4cm 1cm 2.5cm, width =1.1\textwidth]{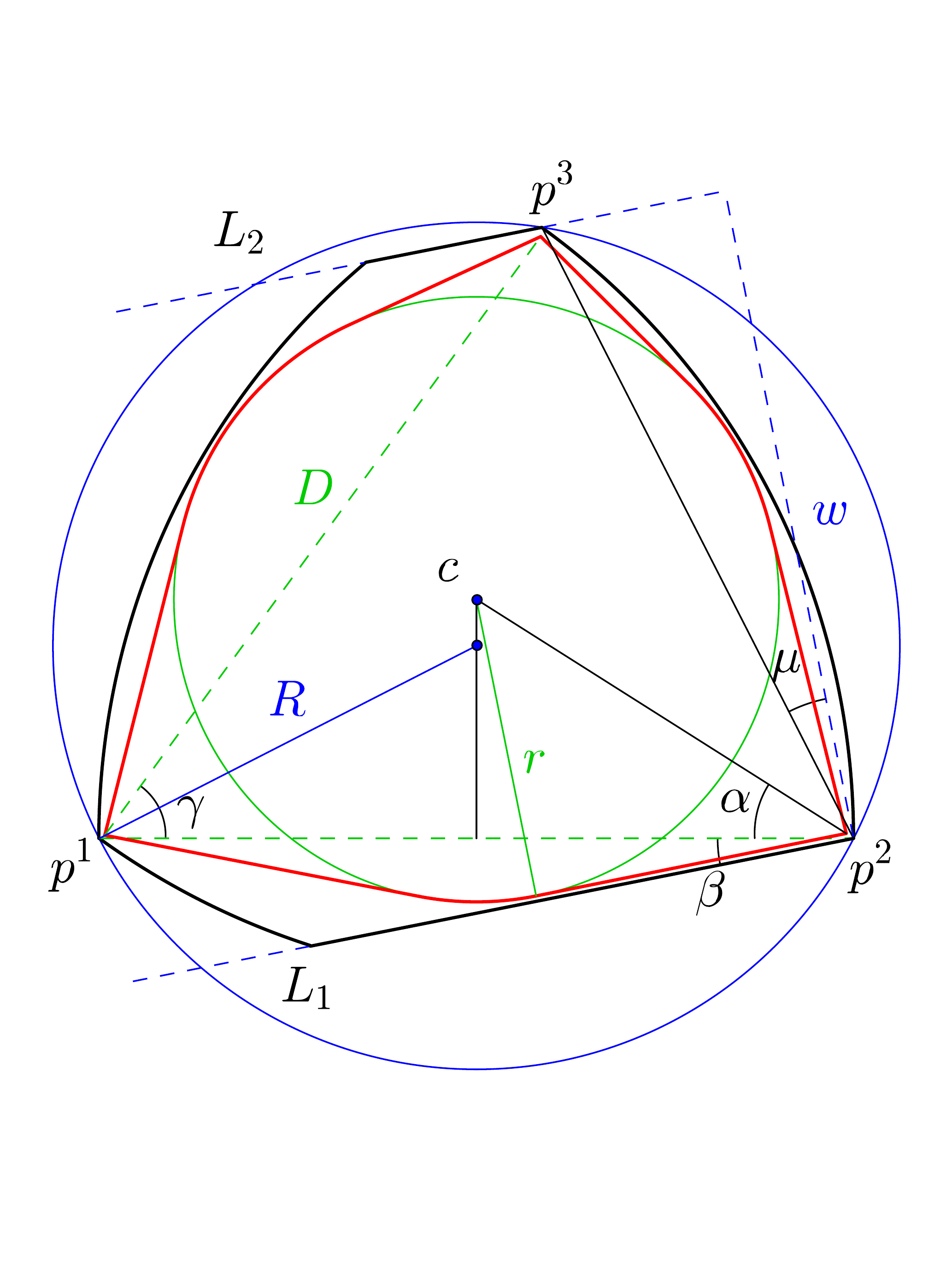}
      \caption{A bent pentagon $\BPen$ (black) and a bent isosceles $\BIso$ (red), the maximal and minimal
        sets mapped to the same coordinates in $(lb_3$).}
      \label{fig:lb_3}
    \end{subfigure} \hfill
    \begin{subfigure}[b]{0.45\textwidth}
      \hspace{-0.5cm}\includegraphics[trim = 1cm 4cm 1cm 2.5cm, width = 1.1\textwidth]{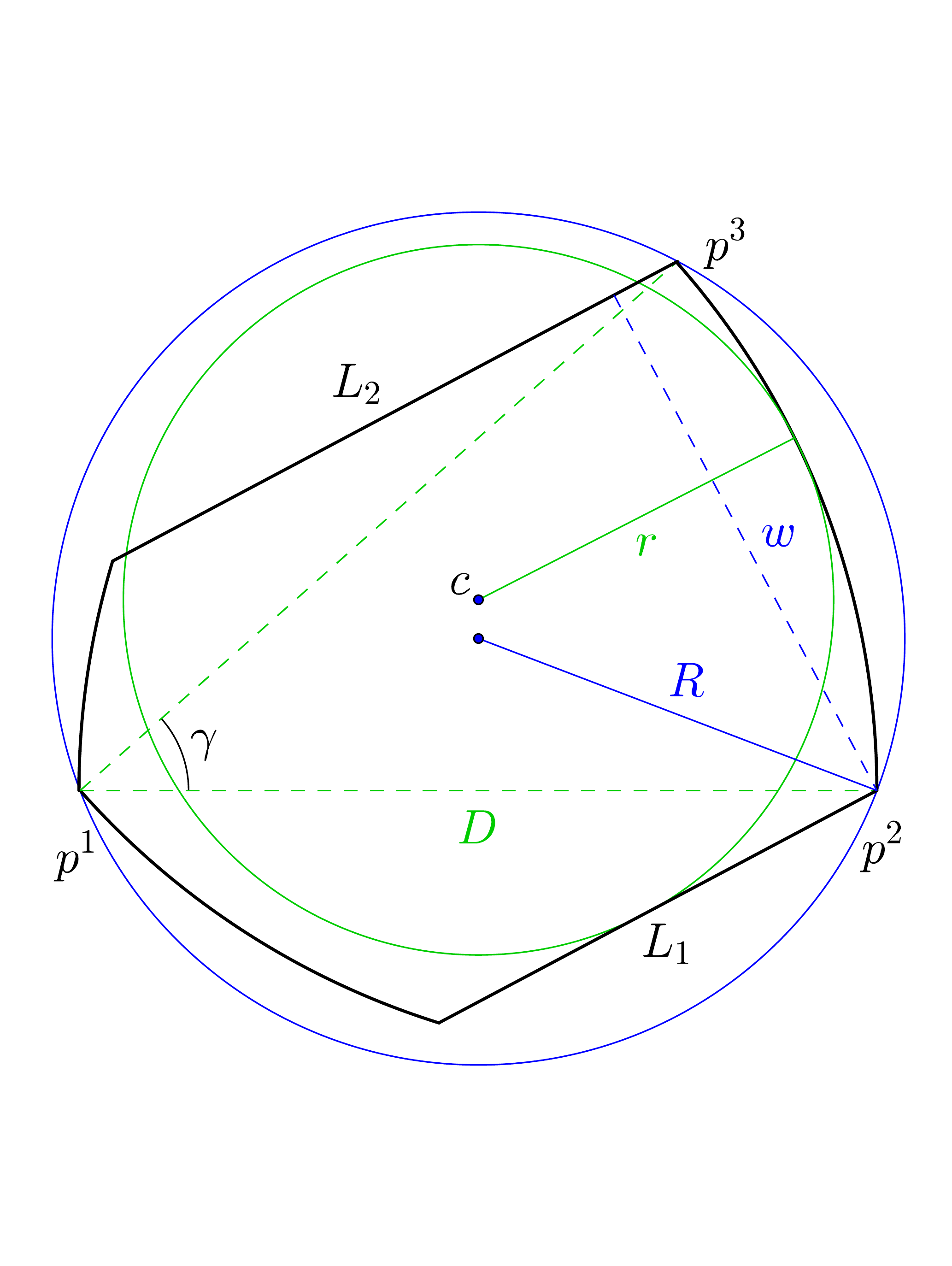}
      \caption{A generalized bent pentagon not being a bent pentagon as $r(\BPen)<r$.\\\hfill \\\hfill}
      \label{fig:lb_3NoN}
    \end{subfigure}
    \caption{Generalized bent pentagons}
    \label{fig:gbp}
  \end{figure}

  Recall the following edges:
  the bent trapezoids from $(\BT,\FRT)$, the bent pentagons from $(\BT,\H)$,
  the maximally-sliced Reuleaux triangles from $(\FRT,\SRT)$, and the general hoods from $(\SRT,\H)$.
  It is easy to check  from their construction that they all are particular cases of
  bent pentagons in the above sense.
  We will justify why they describe the boundaries of $(lb_3)$ in showing that
  they bound the range of the parameters $r, \gamma$, \st~a generalized bent pentagon is a bent pentagon
  (the bent pentagons
  and the bent trapezoids bound $\gamma$ from below while
  the general hoods and the maximally-sliced Reuleaux triangles
  bound $\gamma$ from above).

  \begin{lem}\label{lem:lb3properties}
    Let $r\in[0,1]$, $\gamma,\bar\gamma \in[0,\nicefrac{\pi}{3}]$ with $\gamma < \bar\gamma$,
      as well as $L_1, L_2$ and $\bar L_1,\bar L_2$ the corresponding parallels
      in the construction of the generalized bent pentagon $\BPen$ and $\BPen[r][\bar\gamma]$, respectively.
      Then
    \begin{enumerate}[a)]
    \item the ball $c+r\B$ used in the construction
      intersects (is tangent to) $[p^1,p^2]$, iff $8r \ge 3D(\Iso)$ ($8r = 3D(\Iso)$).
    \item if we restrict to the case $8r\ge 3D(\Iso)$, then it holds $d(L_1,L_2) < d(\bar L_1,\bar L_2)$.
    \end{enumerate}
  \end{lem}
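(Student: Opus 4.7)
The approach is to reduce both parts to elementary planar computations around the constructed ball $c+r\B$.

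\emph{Part (a).} By construction, $c+r\B$ is internally tangent to both arcs of radius $D(\Iso)$ centered at $p^1$ and $p^2$, so $\norm[c-p^1]=\norm[c-p^2]=D(\Iso)-r$. Hence $c$ lies on the perpendicular bisector of $[p^1,p^2]$, and the foot of the perpendicular from $c$ to $\aff\{p^1,p^2\}$ is the midpoint of the segment. By Pythagoras,
\[
\dist{c}{\aff\{p^1,p^2\}}^{2}=(D(\Iso)-r)^{2}-\tfrac{1}{4}D(\Iso)^{2}=\tfrac{3}{4}D(\Iso)^{2}-2rD(\Iso)+r^{2}.
\]
The ball intersects (resp.\ is tangent to) $[p^1,p^2]$ iff this distance is at most (resp.\ equal to) $r$; squaring, the condition collapses to $\tfrac{3}{4}D(\Iso)^{2}\le 2rD(\Iso)$ (resp.\ equality), i.e., $8r\ge 3D(\Iso)$ (resp.\ $8r=3D(\Iso)$). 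Since the foot of the perpendicular is the midpoint of the segment, it always lies in $[p^1,p^2]$, so no further case distinction is needed.

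\emph{Part (b).} I set up coordinates with $p^1=(0,0)$, $p^2=(D,0)$ and $p^3=(D\cos\gamma,D\sin\gamma)$, where $D:=D(\Iso)=2\cos(\gamma/2)$ (using $R(\Iso)=1$), and $c=(D/2,y_c)$ with $y_c=\sqrt{(D-r)^{2}-D^{2}/4}$. Let $\alpha$ denote the signed angle of $L_1$ with the $x$-axis; the tangency of $L_1$ to $c+r\B$ becomes
\[
\tfrac{D}{2}\sin\alpha+y_c\cos\alpha=r,
\]
and the smaller-angle selection singles out the appropriate branch. A direct projection of $p^3-p^2$ onto the unit normal to $L_1$, simplified via $1-\cos\gamma=2\sin^{2}(\gamma/2)$ and $\sin\gamma=2\sin(\gamma/2)\cos(\gamma/2)$, yields the closed form
\[
d(L_1,L_2)=2D\sin(\gamma/2)\cos(\gamma/2-\alpha)=2\sin\gamma\,\cos(\gamma/2-\alpha).
\]

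It then remains to show that, for fixed $r$, this expression is strictly increasing in $\gamma$ on the admissible range $\{\gamma\in[0,\pi/3]:8r\ge 3D(\gamma)\}$. My plan is to use implicit differentiation of the tangency relation to extract $\alpha'(\gamma)$ and then reduce the inequality $\tfrac{d}{d\gamma}\bigl[2\sin\gamma\cos(\gamma/2-\alpha(\gamma))\bigr]>0$ to a trigonometric statement valid throughout the admissible region. This monotonicity check is the main obstacle, because $D$, $y_c$ and $\alpha$ all depend on $\gamma$ simultaneously and the admissible region has a curved boundary. A cleaner geometric alternative is to rewrite $d(L_1,L_2)=r+\nu\cdot(p^3-c)$, where $\nu$ is the unit normal to $L_1$ pointing towards $p^3$, and argue that as $\gamma$ grows, the point $p^3$ recedes from the perpendicular bisector of $[p^1,p^2]$ while $c$ only shifts along that bisector in a controlled way; combined with $\sin\gamma$ being increasing on $[0,\pi/3]$, this should yield the strict inequality directly.
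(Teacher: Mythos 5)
Your Part (a) is correct and is essentially the paper's own argument: from the tangency of $c+r\B$ to the two arcs one gets $d(c,[p^1,p^2])^2=(D(\Iso)-r)^2-\tfrac14 D(\Iso)^2$, and comparing with $r^2$ collapses to $8r\ge 3D(\Iso)$, with equality exactly in the tangent case. Part (b), however, contains a genuine gap. You correctly derive the closed form $d(L_1,L_2)=2D\sin(\nicefrac{\gamma}{2})\cos(\nicefrac{\gamma}{2}-\alpha)$ (which agrees with the width formula \eqref{eq:width-bentpent}), but the entire content of the claim is the strict monotonicity of this quantity in $\gamma$ for fixed $r$, and that is precisely the step you never carry out: you write \enquote{my plan is to use implicit differentiation} and \enquote{this should yield the strict inequality directly} without executing either route. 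The difficulty you yourself identify --- that $D$, $y_c$ and $\alpha$ all vary with $\gamma$ simultaneously --- is real, and your \enquote{cleaner geometric alternative} ($d(L_1,L_2)=r+\nu\cdot(p^3-c)$) does not resolve it either, since one must still control how the unit normal $\nu$ of $L_1$ rotates as $\gamma$ changes; that is the same obstruction in different clothes.

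For comparison, the paper avoids differentiation entirely and argues by a direct comparison of the two configurations $\BPen$ and $\BPen[r][\bar\gamma]$. Normalizing both so that $[p^1,p^2]$ and $[\bar p^1,\bar p^2]$ are horizontal chords of $\S$ below the center, it uses Part (a) to conclude that all four lines have non-negative slope, and the monotonicity of $x\mapsto(x-r)^2-\tfrac14 x^2$ for $x\ge 2r$ together with $D(\Iso)>D(\Iso[\bar\gamma])$ to get $d(c,[p^1,p^2])>d(\bar c,[\bar p^1,\bar p^2])$; it then compares the positions of $p^2,p^3$ with $\bar p^2,\bar p^3$ on $\S$ to deduce that the slope of $L_1$ is smaller than that of $\bar L_1$, and finally rotates $\bar L_1,\bar L_2$ around $\bar p^2,\bar p^3$ into directions parallel to $L_1,L_2$, which decreases their distance while keeping it above $d(L_1,L_2)$. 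If you want to keep your analytic route, you must actually extract $\alpha'(\gamma)$ from the tangency relation $\tfrac{D}{2}\sin\alpha+y_c\cos\alpha=r$ and verify the sign of the full derivative on the admissible region, including near its curved boundary $8r=3D(\gamma)$; as written, the proof of Part (b) is missing.
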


  \begin{proof}
    \begin{enumerate}[a)]
    \item The distance from $c$ to $[p^1,p^2]$ is at most $r$, iff $[p^1,p^2]$ intersects $c+r\B$,
      that is, when
      $d(c,[p^1,p^2])^2 = (D(\BPen)-r)^2 - \nicefrac{1}{4} \, D(\BPen)^2 \le r^2$
      (\cf~the right-angled triangle $T=\conv\{c,p^2,\nicefrac12 \, (p^1+p^2)\}$ in Figure \ref{fig:lb_3}).
      From simplifying we obtain that this is equivalent to $\nicefrac{3}{4} \, D(\BPen)-2r \le 0$
      or $8r\geq 3D(\BPen)$ with equality, iff
      $r=d(c,[p^1,p^2])$,
      which means that the inball is tangent to $[p^1,p^2]$.
    \item We use the complete notation as in the construction of the bent pentagons, with a bar on top for
      $\BPen[r][\bar\gamma]$ and
      assume that $[p^1,p^2]$ as well as $[\bar{p}^1,\bar{p}^2]$ are horizontal, below $0$ with $p^1_1\leq p^2_1$
      and $\bar{p}^1_1\leq\bar{p}^2_1$. Then, it follows from Part (a) that
      all lines $L_i, \bar{L}_i$, $i=1,2$, have non-negative slope.
      Since the function $f(x)=(x-r)^2-\nicefrac{1}{4} \, x^2$ is increasing, if $x \geq 2r$ it follows
      \[\begin{split}
        d(c,[p^1,p^2]) &=\sqrt{(D(\BPen)-r(\BPen))^2-\nicefrac{1}{4} \, D(\BPen)^2} \\
        & > \sqrt{(D(\BPen[r][\bar\gamma])-r(\BPen[r][\bar\gamma]))^2-\nicefrac{1}{4} \, D(\BPen[r][\bar{\gamma}])^2}
        =d(\bar{c},[\bar{p}^1,\bar{p}^2]).
      \end{split} \]

      Using again the triangle $T$ defined above and the pythagorean theorem, we obtain
      \[p^2_2=-\sqrt{1-\nicefrac{1}{4} \, D(\BPen)^2} > -\sqrt{1-\nicefrac{1}{4} \, D(\BPen[r][\bar\gamma])^2}
      =\bar p^2_2.\]
      Moreover, since $\gamma<\bar\gamma$, rotating $\Iso$ around $\S$ until $p^1$ becomes $\bar p^1$,
      it follows $p^j, j=2,3$ belong to the smaller of the two arcs of $\S$ with endpoints $\bar p^j, j=2,3$
      and thus in particular it holds $p^3_2 < \bar p^3_2$ after the rotation.
      Undoing the rotation, \ie $p^1$ moves upward and $p^2, p^3$ downwards into their old positions,
      it still holds $p^3_2<\bar p^3_2$ and thus also
      both points $p^2,p^3$ still lie in the shorter arc of $\S$ with endpoints $\bar p^j, j=2,3$.
      Now, it follows from $\gamma <\bar\gamma$ that $\norm[p^1-p^2] > \norm[\bar{p}^1-\bar{p}^2]$,
      which together
      with $d(c,[p^1,p^2])> d(\bar{c},[\bar{p}^1,\bar{p}^2])$ means that the slope of
      $L_1$ is less than the one of $\bar{L}_1$.
      Using this fact, we see that if one rotates $\bar{L}_i$, $i=1,2$,
      around $\bar{p}^i$, $i=2,3$, \st~they become parallel
      to $L_i$, $i=1,2$, their distance decreases, but is still bigger than the distance between $L_1$ and $L_2$.
      Hence
      $w(\BPen)=d(L_1,L_2)< d(\bar{L}_1,\bar{L}_2)=w(\BPen[r][\bar{\gamma}])$.
    \end{enumerate}
  \end{proof}

  Hence we see that only if Part (a) of Lemma \ref{lem:lb3properties} holds
  (which is, because of $D(\Iso)=2 R(\Iso) \cos(\nicefrac{\gamma}{2})$, equivalent to
   $\gamma \ge 2\arccos\left(\nicefrac{4}{3}\,r\right)$), we have $\Iso\subset\BPen$, the latter
   implying that $R(\BPen)=R(\Iso)$ and $D(\BPen)=D(\Iso)$.

  Now considering $c+r\B$, we show that it is the inball of $\BPen$
  (which means that $r(\BPen)=r$), whenever $r, \gamma$ are in the range described by the edges above.
  To do so, it is enough to show that $L_2$ does not intersect the interior of $c+r\B$.
  However, using Part (b) of Lemma \ref{lem:lb3properties},
  it follows that if $r, \gamma$ determine a bent pentagon with maximal
  $\gamma$ depending on $r$
  (\ie $\BPen$ belongs to $(\FRT,\SRT)$ or $(\SRT,\H)$) then $L_2$ does not intersect $c+r\B$.
  Decreasing $\gamma$ decreases monotonously $d(L_1,L_2)$ until $\BPen$ becomes a set
  from $(\BT,\FRT)$  or
  $(\BT,\H)$ and in both cases $L_2$ does not intersect $c+r\B$ at any point of the transformation
  (except for the sets in $(\BT,\H)$, where it becomes tangent).

  Finally, from Part (b) of Proposition \ref{prop:exposed}, we know that the width of $\BPen$
  must be attained between two supporting parallel lines touching the endpoints of a
  perpendicular segment in $\BPen$.
  However, considering the construction of the generalized bent pentagons,
  any such pair of parallel supporting
  lines, except $L_1,L_2$, touches an arc of $\BPen$ and the vertex it is drawn around,
  therefore having a distance of $D(\BPen) \ge d(L_1,L_2)$ (\cf~Figure \ref{fig:lb_3}).
  (Observe that this argument fails if the pentagon does not fulfill Part (a)
  of Lemma \ref{lem:lb3properties}, as $p^1$ would not belong to $\BPen$ anymore.)
  Hence $w(\BPen)=d(L_1,L_2)$.

  \medskip

  The given boundaries for the bent pentagons are best possible. Considering the upper bounds first,
  on the one hand $\gamma\le\nicefrac{\pi}{3}$ by defintion
  and for all $r \in [r(\FRT),r(\SRT)]$ this bound is reached by a maximally-sliced
  Reuleaux triangle $\SliRT = \BPen[r][\nicefrac{\pi}{3}]$.

  On the other hand, in case of $r \in [r(\SRT),r(\H)]$, inequality \eqref{ib_2} implies that
  $D(\BPen)\ge r + R(\BPen) = D(\BPen[r][2\arccos(\nicefrac{(r+1)}{2})])$, which
  together with $D(\BPen) = D(\Iso) = 2 R(\Iso)\cos(\nicefrac{\gamma}{2})$
  and $D(\Iso)$ descending as a function of $\gamma$,
  implies that $\gamma\le 2\arccos(\nicefrac{(r+1)}{2})$. Equality in this situation is attained
  by the general hoods.

  Regarding the lower bounds, in both cases choosing $\gamma$ below the given bound yields a generalized
  bent pentagon not being a bent pentagon:
  As already mentioned, Part (a) of Lemma \ref{lem:lb3properties} implies
  $\gamma \ge 2\arccos\left(\nicefrac{4}{3} \, r\right)$ in general.
  And in case of $r \in [r(\BT),r(\H)]$ choosing
  $2\arccos\left(\nicefrac{4}{3}\,r\right) \le \gamma<\bar\gamma=\gamma_r$,
  Part (b) of Lemma \ref{lem:lb3properties} says that
  $d(L_1,L_2) < d(\bar L_1,\bar L_2)$. But since $L_1$ supports $c+r\B$ and both $\bar L_i, i=1,2$ support the inball
  of $\BPen[r][\gamma_r]$, it follows that $L_2$ would intersect the interior of $c+r\B$.

  For the computation of the radii we denote the angle in $p^2$ between $[p^1,p^2]$
  and $[c,p^2]$ by $\alpha$, the angle in $p^2$ between $[p^1,p^2]$ and $L_1$ by $\beta$,
  as well as the angle in $p^2$ between $[p^2,p^3]$ and the line perpendicular to
  $L_1$ by $\mu = \nicefrac{\gamma}{2} -\beta$ (\cf Figure \ref{fig:lb_3}).
  Omitting again the argument $\BPen$ in the radii functionals, it holds
    \[\text{(i)} \;~ \cos(\alpha)=\frac{D}{2(D-r)}, \quad \text{(ii)} \;~ \sin(\alpha+\beta)=\frac{r}{D-r},
     \quad \text{(iii)} \;~ \cos(\mu) = \frac{w}{\norm[p^2-p^3]}.
    \]
  From (i) and (ii) we obtain that
  $\beta=\arcsin\left(\nicefrac{r}{D-r}\right)-\arccos\left(\nicefrac{D}{2(D-r)}\right)$,
  which together with
  $\gamma=2\arccos(\nicefrac{D}{2R})$ implies
  $\mu=\frac \gamma 2 - \beta = \arccos\left(\nicefrac{D}{2R}\right) +
  \arccos\left(\nicefrac{D}{2(D-r)}\right)  -\arcsin\left(\nicefrac{r}{D-r}\right)$.
  Inserting $\mu$ and
  $\norm[p^2-p^3]=2D\sqrt{1-\left(\nicefrac{D}{2R}\right)^2}$ into (iii)
  results in
  \begin{equation} \label{eq:width-bentpent}
    \begin{split}
      & w = 2D\sqrt{1-\left(\nicefrac{D}{2R}\right)^2}  \\
      & \cos\left(\arccos\left(\frac{D}{2(D-r)}\right)
        + \arccos\left(\frac{D}{2R}\right)-\arcsin\left(\frac{r}{D-r}\right)\right).
    \end{split}
  \end{equation}

  Thus each $\BPen$ satisfies \eqref{eq:3PT} with equality.

  Again, we also define the bent isosceles $\BIso:=\conv(\Iso, c+r\B)$, which obviously
  fulfill  $R(\BIso)=R(\BPen)$, $D(\BIso)=D(\BPen)$, and $r(\BIso)=r(\BPen)$.
  Using Lemma \ref{lem:lb3properties},
  we know that $c+r\B$ intersects all three edges of $\Iso$. However,
  from Part (b) of Proposition \ref{prop:exposed} it follows, that
  the width of $\BIso$ is necessarily attained between a parallel pair of
  lines, from which one supports the inball and a vertex and the other a different vertex.
  Doing a direct comparison
  among the six pairs of such parallel supporting lines, we easily obtain
  $w(\BIso)=d(L_1,L_2)=w(\BPen)$ (\cf~Figure \ref{fig:lb_3}).
  Hence it holds $f(K)=f(\BPen)$, iff $\BIso \subset K \subset \BPen$.

  \begin{figure}
    \centering
    \includegraphics[trim = 1cm 11cm 1cm 10cm, width =13cm]{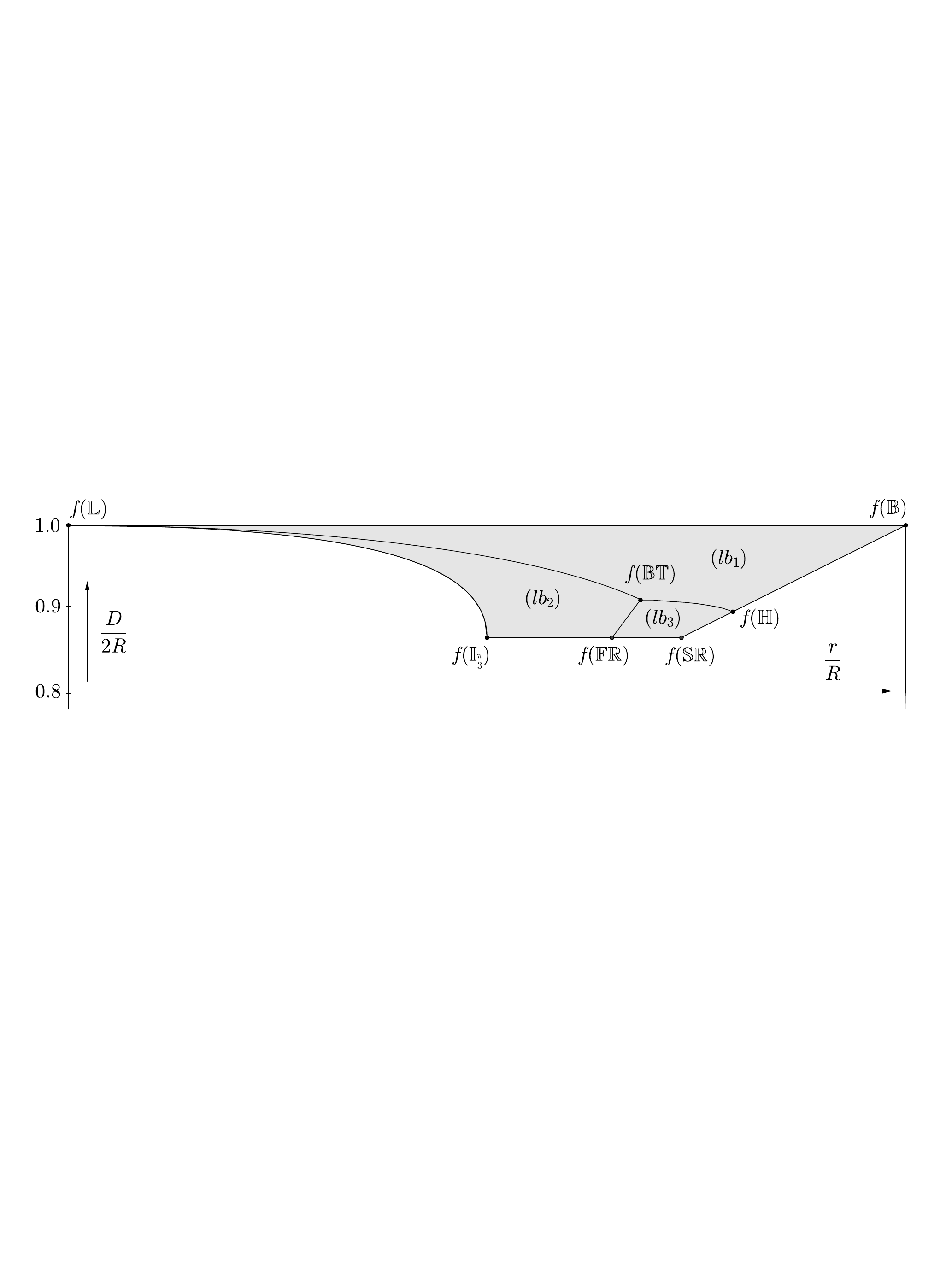}
    \caption{Bottom view of the diagram $f(\CK^2)$.}
    \label{fig:lowboundary}
  \end{figure}

\item[$(ub_2)$] \label{sailing-boat-facet}

  Let $\gamma\in[\nicefrac{\pi}{3},\nicefrac{\pi}{2}]$, $r\in[r(\Iso),r(\CSB)]$, and $p^1,p^2,p^3$,
  \st~$\conv\{p^1,p^2,p^3\}=\Iso$.
  Then $I_K = \frac{r}{r(\Iso)}(\Iso-p^3)  + p^3 = \conv\{q^1,q^2,p^3\}$
  is an isosceles triangle of inradius $r$, \st~$q^i = \frac{r}{r(\Iso)} p^i + (1-\frac{r}{r(\Iso)}) p^3$,
  $i=1,2$ (\cf~Figure \ref{fig:SBoat}).
  We call the sets $\SBoat=I_K \cap \B$
  \cemph{dblue}{(general) sailing boats},
  generalizing the concentric and right-angled
  sailing boats which are mapped to the edges $(\EqT,\RAT)$ and $(\EqT,\SB)$.

  It follows directly from the definition
  that $p^1 \in[q^1,p^3] \cap \S$ and $p^2\in[q^2,p^3] \cap \S$ and thus
    $R(\SBoat) = R(\Iso)$, $D(\SBoat) = D(\Iso) = 2R(\SBoat)\sin(\gamma)$ and $r(\SBoat) = r(I_K) = r$.
  Moreover, since $\Iso \subset \SBoat \subset \CSB$,
  the width of $\SBoat$ is obviously
  taken between $[q^1,q^2]$ and $p^3$,~\st
    \begin{equation*}
      w(\SBoat) = r \frac{w(\Iso)}{r(\Iso)}  = r \left(1+\frac{1}{\sin(\nicefrac{\gamma}{2})}\right) 
      = r\left(1+\frac{2\sqrt{2}R}{D}\sqrt{1+\sqrt{1-\left(\frac{D}{2R}\right)^2}}\right).
    \end{equation*}

  Thus all general sailing boats $\SBoat$ are extreme for the inequality \eqref{eq:sailing boats}.

  Since $\SBoat[r(\Iso)][\gamma]=\Iso$, $\SBoat[r(\CSB)][\gamma]=\CSB$,
  and $\SBoat[r][\nicefrac{\pi}{2}]$ a right-angled sailing-boat, the edges
  $(\EqT,\RAT)$, $(\EqT,\SB)$, and $(\RAT,\SB)$ form the boundaries of this facet.

  While it holds $K \subset \SBoat$ for all set $K$ with $f(K)=f(\SBoat)$,
    in general there do not exist unique minimal sets,
    as we have already discussed for the edge $(\RAT,\RSB)$. However,
    if $w(\SBoat) \le \norm[p^1-p^3]$, the minimal set
    $\conv\left(\Iso, (p^3 + w(\SBoat)\B)\cap\SBoat \right) $
    is unique (\cf~Figure \ref{fig:SBoat}).

  \begin{figure}
    \centering
    \begin{subfigure}[b]{0.45\textwidth}
      \includegraphics[trim = 1cm 3.5cm 1cm 2cm, width =\textwidth]{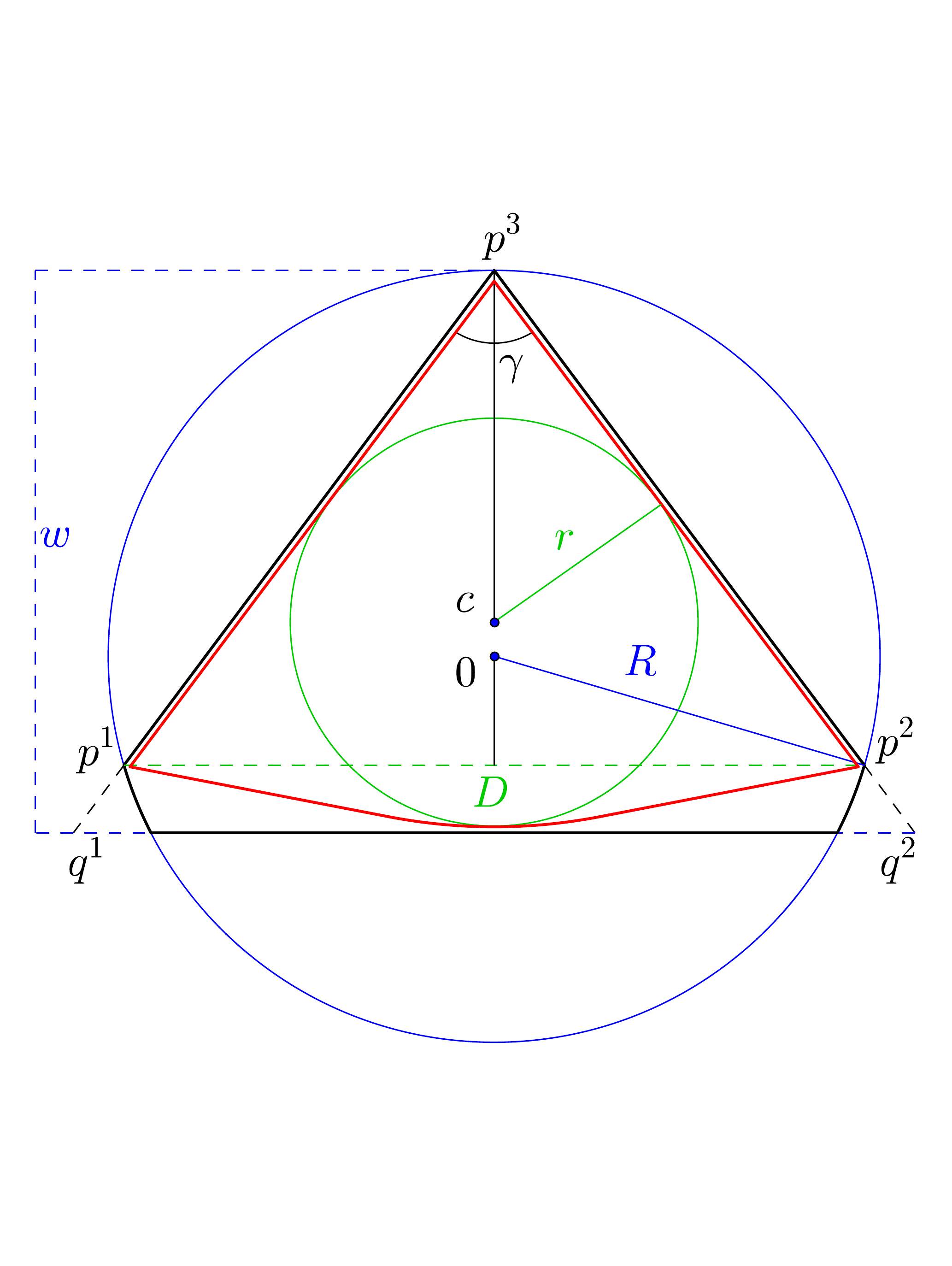}
      \caption{A general sailing boat $\SBoat$
        in black, and a corresponding minimal set in red.}
      \label{fig:SBoat}
    \end{subfigure} \hfill
    \begin{subfigure}[b]{0.45\textwidth}
      \includegraphics[trim = 1cm 3.5cm 1cm 2cm, width = \textwidth]{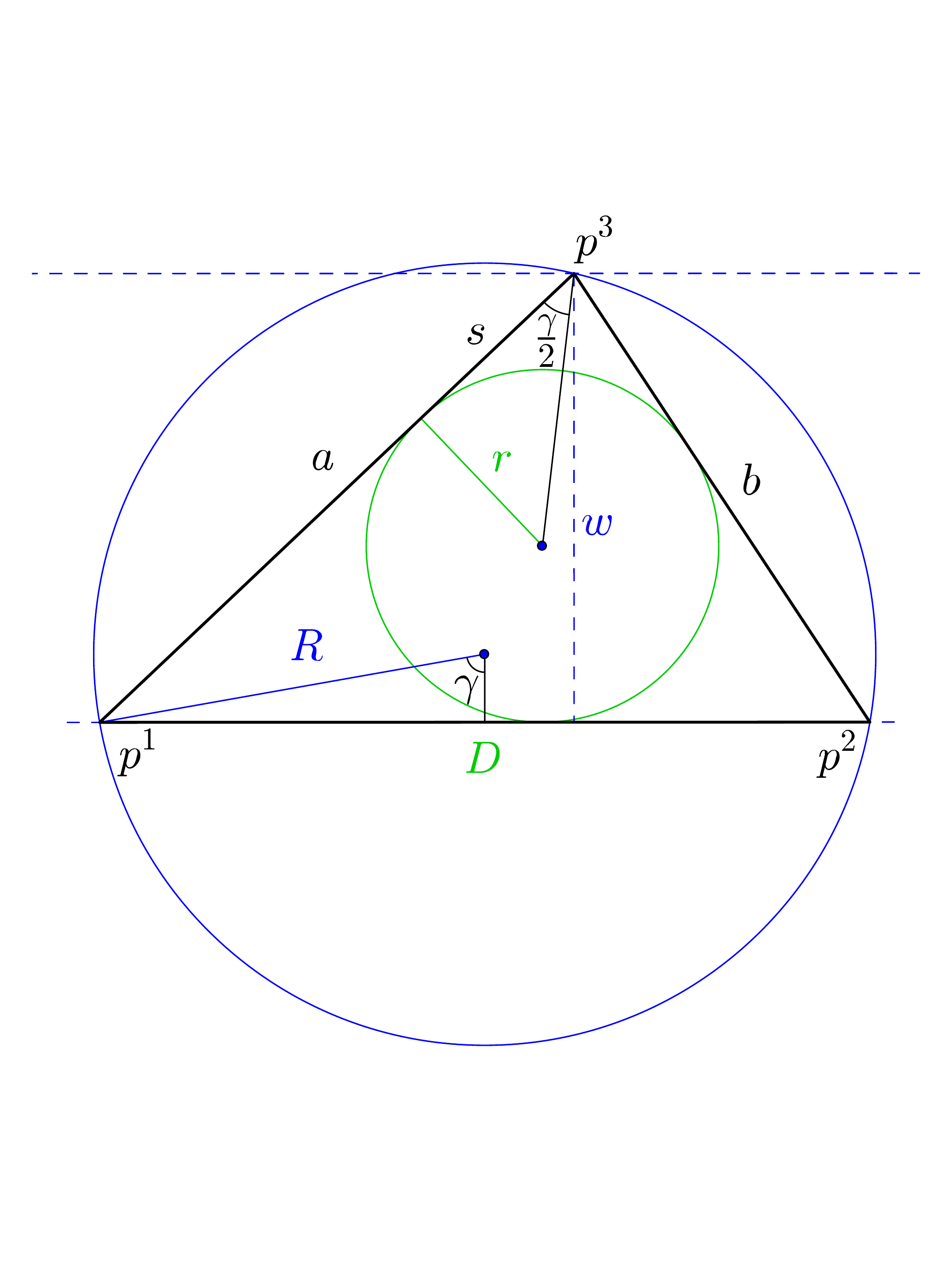}
      \caption{An acute triangle $\Triangle$.\\\hfill}
      \label{fig:Triangle}
    \end{subfigure}
    \caption{The general sailing boats and acute triangles fill the two remaining facets of the upper boundary.}
  \end{figure}

\item[$(ub_3)$] Any \cemph{dblue}{acute triangle} is circumspherical, \ie~all its vertices are situated on the
  circumsphere. For any $D\in[\nicefrac {\sqrt{3}}2, 1]$
  consider the two angles
  $0 \le \gamma_1 \le \nicefrac{\pi}{3} \le \gamma_2 \le \nicefrac{\pi}{2}$,
  \st~$D(\Iso[\gamma_1]) = D(\Iso[\gamma_2]) = D$.
  It is easy to see that for any $r\in[r(\Iso[\gamma_1]),r(\Iso[\gamma_2])]$ there exists an 
  acute triangle $\Triangle$ with the same circumradius and diameter as
  $\Iso[\gamma_1]$ and $\Iso[\gamma_2]$ and inradius $r$.

  Since every acute triangle is enclosed (in the above sense)
  between two isosceles triangles with the same diameter and circumaradius,
  the edges $(\L,\EqT)$, $(\EqT,\RAT)$ (both kinds of isosceles triangles) and the edge
  $(\L,\RAT)$ (right-angled triangles) form the relative boundary of this facet.

  Let $\gamma$ denote the angle of $\Triangle$ at the vertex $p^3$, opposing the diametral edge
  $[p^1,p^2]$ and $s$ the distance within the other two edges of $p^3$ to the touching points of the inball
  (see Figure \ref{fig:Triangle}).
  Then, as we have used already in the computations of the edge $(\L,\RAT)$
  in \ref{ss:edges},
  the perimeter of $\Triangle$ is $2(s+D)$. Thus using the semiperimeter formula for the area of a triangle,
  Proposition \ref{lem:angle}
  and simple trigonometry, we get
  \[\text{(i)} \;~  wD=2r(s+D), \quad \text{(ii)} \;~ D=2R\sin(\gamma) \quad \text{(iii)} \;~
  r=s\tan\left(\nicefrac{\gamma}{2}\right),\]
  (\cf~Figure \ref{fig:Triangle}).
  Now, substituting
  the value of $s$ in (i) by $s=\frac{r}{\tan\left(\nicefrac{\gamma}{2}\right)}$ obtained from (iii), while
  using (ii) to replace $\gamma$, we finally arrive in
  \[
  wD=2r\left(D+\frac{r}{\tan\left(\frac{1}{2}\arcsin\left(\frac{D}{2R}\right)\right)}\right) =
  2r \left(D + \frac{2rR}{D} \left(1 + \sqrt{1-\left(\frac{D}{2R}\right)^2}\right)
      \right).
  \]

\end{itemize}

\begin{figure}
  \centering
  \includegraphics[trim = 1cm 11cm 1cm 10cm, width =13cm]{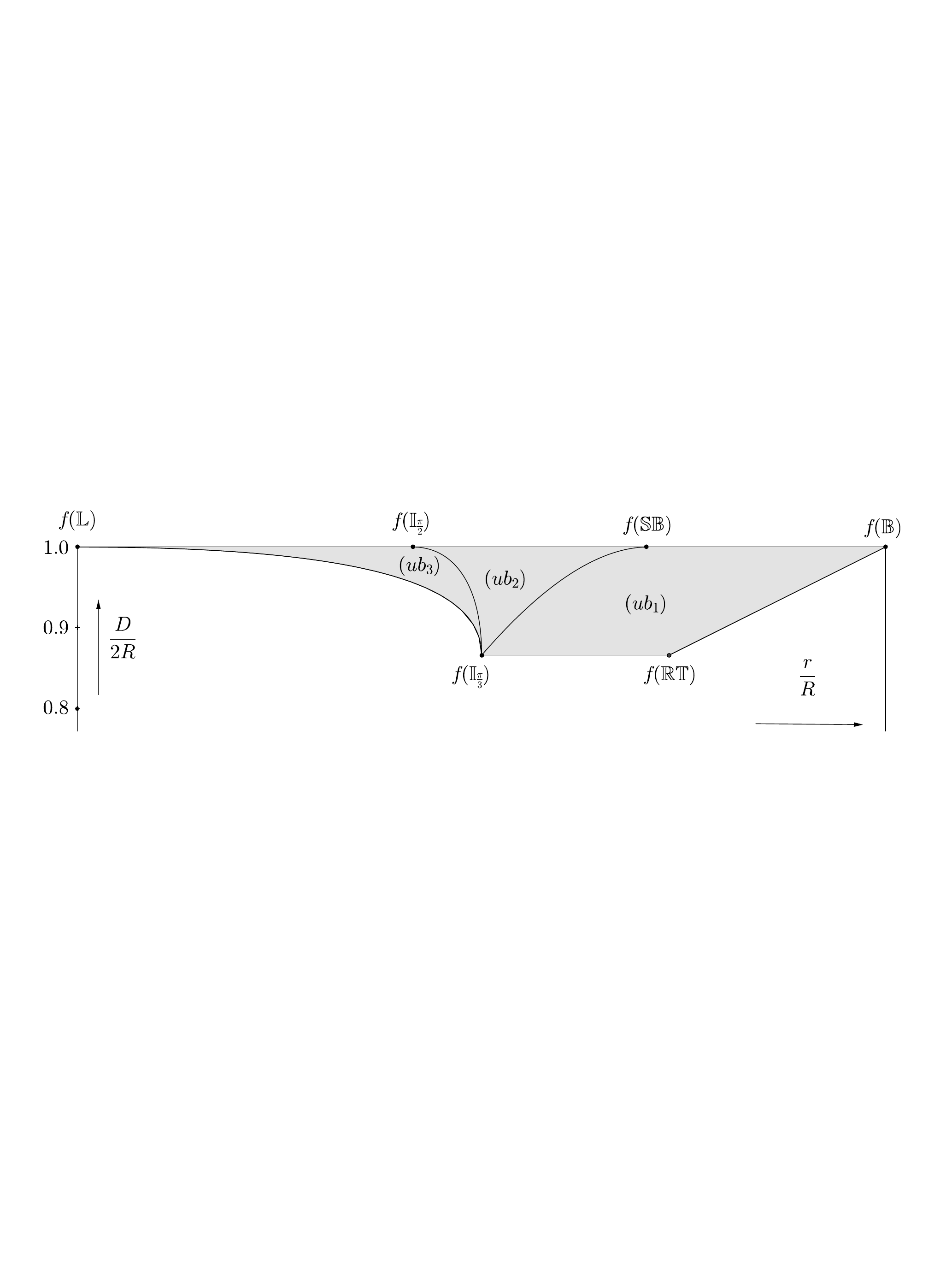}
  \caption{Top view of the diagram $f(\CK^2)$.}
\end{figure}

\section{Proofs of the main results}\label{s:proofs}

In this section we give the proofs of the main theorems. For preparation, we first state a corollary
and some technical lemmas.

\begin{cor} \label{cor:separate}
  Let $K\in\CK^n$ and $c\in\R^n$, \st~$c+\rho\B \subseteq K\subset \B$,
  $p^1,\dots,p^k$, $u^1,\dots,u^l$ as in Proposition \ref{prop:CONT},
  $T =c + \bigcap_{i=1}^l \{x\in\R^n:(u^i)^Tx \le \rho \}$
    and $T'= \conv\{p^1,\dots,p^k\}$.
  Then
  \begin{enumerate}[a)]
  \item at least two of the vertices of $T$ do not belong to $\inte(\B)$, and
  \item $T'$ separates $\bd(T)$ from 0.
  \end{enumerate}
\end{cor}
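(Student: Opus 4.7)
The plan is to deduce both parts from the two halves of Proposition \ref{prop:CONT} together with the inclusion $K \subset T$ built into the setup. Indeed, Proposition \ref{prop:CONT}(b) gives $K-c \subset \bigcap_i\{x:(u^i)^T x \le \rho\}$, hence $K \subset T$, so $p^i \in K \subset T$ for every $i$; convexity of $T$ then yields $T' = \conv\{p^1,\dots,p^k\} \subset T$. Combined with $0 \in T'$ from Proposition \ref{prop:CONT}(a), this already establishes part (b): $T'$ is a convex body inside $T$ containing the origin, so it lies radially between $0$ and $\bd(T)$ in the natural sense that every ray from $0$ that exits $T$ must also exit $T'$ along the way.

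For part (a), I would argue by contradiction, assuming that at most one vertex of $T$ lies outside $\inte(\B)$. If no vertex lies outside $\inte(\B)$, then $T$ (as the convex hull of its vertices, which all lie in the convex set $\inte(\B)$) satisfies $T \subset \inte(\B)$, so $K \subset T \subset \inte(\B)$, contradicting the existence of $p^1 \in K \cap \S$. Otherwise a single vertex $v$ of $T$ satisfies $V := \|v\| \ge 1$, and the convex hull $Y$ of the remaining vertices satisfies $\|y\| < 1$ for every $y \in Y$. Write any $x \in T$ with $\|x\| = 1$ as $x = \beta v + (1-\beta) y$ with $\beta \in (0,1]$ and $y \in Y$ (necessarily $\beta > 0$, otherwise $\|x\| = \|y\| < 1$). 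Expanding and rearranging $\|x\|^2 = 1$ yields
\[
\beta^2 V^2 + 2\beta(1-\beta)(y \cdot v) \;=\; 1 - (1-\beta)^2 \|y\|^2 \;>\; 0,
\]
and dividing by $\beta$ gives $\beta V^2 + 2(1-\beta)(y \cdot v) > 0$. Averaging this with the trivial inequality $\beta V^2 > 0$ produces
\[
x \cdot v \;=\; \tfrac12 \beta V^2 + \tfrac12\bigl(\beta V^2 + 2(1-\beta)(y \cdot v)\bigr) \;>\; 0.
\]
Therefore $T \cap \S$ is contained in the open halfspace $\{x : x \cdot v > 0\}$. In particular every $p^i$ satisfies $p^i \cdot v > 0$, so every convex combination does too, contradicting $0 \in \conv\{p^1,\dots,p^k\}$.

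The main obstacle is extracting the strict positivity of $x \cdot v$ in part (a); once one notices that the cross term in the expansion of $\|x\|^2 = 1$ carries a factor of $2$, the averaging trick above splits $x \cdot v$ cleanly into two obviously positive pieces. Part (b) is, by contrast, essentially the immediate consequence of the containments $0 \in T' \subset T$ once they are observed.
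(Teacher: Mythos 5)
Your proof is correct and follows essentially the same route as the paper's, whose entire argument is the one-line observation that $0 \in T' \subset K \subset T$ and that if all but at most one vertex of $T$ lay in $\inte(\B)$ one would get $R(K)<1$; your halfspace computation showing $T\cap\S\subset\{x : x\cdot v>0\}$ is simply the detailed verification of that implication via Proposition \ref{prop:CONT}(a). (Like the paper, you implicitly assume $T$ is bounded, i.e.\ the convex hull of its vertices, which can fail only in the degenerate case $0\in\bd(\conv\{u^1,\dots,u^l\})$ that the paper also treats separately where it matters.)
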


\begin{proof}
 Both statements follow directly from $0 \in T' \subset K \subset T$,
 recognizing that, if all but at most one vertex of $T$ would belong to $\inte(\B)$, it would follow that $R(K)<1$, a contradiction.
\end{proof}

While Proposition \ref{prop:CONT} in Section \ref{s:intro}
deduces properties of the inner and outer radii separately from their
definitions, Corollary \ref{cor:separate} already combines them.
In the following lemmas, we derive some properties from the
interaction between both of them and the diameter.

We recall that we always assume
$\B$ to be the circumball of $K$
even though keeping the value $R(K)$ in the equations.

\begin{lem}\label{lem:semisphere}
  Let $K\in\CK^n$ and $c\in\R^n$, \st~$c+r(K)\B \subset K \subset \B$.
  Then there exist $u^1,\dots,u^l$ and $T$ as in Corollary \ref{cor:separate},
  as well as $u \in \S$ \st~$\S^\geq_u \subset T\cap \S$ and  $\S^>_u \cap \bd(T) = \emptyset$,
  iff $K=\B$, $r(K)=1$ and $c=0$.
\end{lem}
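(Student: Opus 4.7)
My plan is to handle the two directions separately. For the \enquote{if} direction, assume $K=\B$, $r(K)=1$, and $c=0$. I would construct explicit witnesses: pick any $u\in\S$, any $w\in\S\cap\{u\}^{\bot}$ (possible since $n\ge 2$), and set $l=2$, $u^1=w$, $u^2=-w$. Then $0=\nicefrac{1}{2}u^1+\nicefrac{1}{2}u^2\in\conv\{u^1,u^2\}$, so the hypotheses of Proposition \ref{prop:CONT}(b) are met with $q^i=u^i$. The resulting $T$ is the slab $\{x:|w^{T}x|\le 1\}$; it contains all of $\S$ (so $\S_u^{\ge}\subset T\cap\S$), and meets $\bd(T)$ on $\S$ only in $\pm w\in\{u\}^{\bot}$, so $\S_u^{>}\cap\bd(T)=\emptyset$.

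For the \enquote{only if} direction, write $\rho=r(K)$ and first unfold the two hypotheses analytically: $(u^i)^T(y-c)\le\rho$ for every $y\in\S_u^{\ge}$ and every $i$, with the strict version for every $y\in\S_u^{>}$, because $\S_u^{>}\subset T\setminus\bd(T)=\inte(T)$. The first main step is to show that no $u^i$ lies in the open hemisphere $\S_u^{>}$: if some $u^i\in\S_u^{>}$, then $u^i\in\inte(T)$, and the self-constraint at $j=i$, $y=u^i$ gives $1-(u^i)^{T}c<\rho$, i.e.~$(u^i)^{T}c>1-\rho$; but Cauchy-Schwarz together with the inclusion $c+\rho\B\subset\B$ force $(u^i)^{T}c\le\|c\|\le 1-\rho$, a contradiction. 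Hence every $u^i$ satisfies $u^i\cdot u\le 0$.

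Now Proposition \ref{prop:CONT}(b) provides $\alpha_i\ge 0$ with $\sum\alpha_i=1$ and $0=\sum_i\alpha_iu^i$; dotting with $u$ turns this into a sum of non-positive terms equal to zero, so every active index (those with $\alpha_i>0$) must satisfy $u^i\cdot u=0$, i.e.\ lie on the equator $\{u\}^{\bot}\cap\S$. Applying the non-strict hypothesis at $y=u^i$ gives $(u^i)^{T}c\ge 1-\rho$; decomposing $c=(u\cdot c)u+c^{\bot}$ with $c^{\bot}\perp u$, the left side equals $u^i\cdot c^{\bot}$, which Cauchy-Schwarz sandwiches between $1-\rho$ and $\|c^{\bot}\|\le\|c\|\le 1-\rho$. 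The main obstacle will be to exploit that all three inequalities collapse to equalities: this forces $\|c^{\bot}\|=1-\rho$ and, via the equality case of Cauchy-Schwarz, pins $u^i=c^{\bot}/\|c^{\bot}\|$ for every active $u^i$, a single vector shared by all of them. Once this collapse is in place, $\sum\alpha_iu^i=c^{\bot}/\|c^{\bot}\|\ne 0$ contradicts $\sum\alpha_iu^i=0$ unless $c^{\bot}=0$; and $c^{\bot}=0$ yields $1-\rho=0$, hence $\rho=1$ and $\|c\|=0$, so $K=c+\rho\B=\B$, as desired.
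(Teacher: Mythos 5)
Your proof is correct and takes essentially the same route as the paper's: the identical slab construction for the \enquote{if} direction, and for the \enquote{only if} direction the same central clash between $(u^i)^Tc \ge 1-r(K)$ (from $u^i\in\S_u^{\ge}\subset T$ evaluated at its own constraint) and $(u^i)^Tc\le \|c\|\le 1-r(K)$ (Cauchy--Schwarz plus $c+r(K)\B\subset\B$). The only cosmetic difference is that the paper picks two specific indices $j,k$ with $u^Tu^j,u^Tu^k\ge 0$ and contradicts each in turn, invoking that an inball of radius $<1$ can touch $\S$ at most once, whereas you treat all indices simultaneously and let the equality case of Cauchy--Schwarz pin every active $u^i$ to $c^{\bot}/\|c^{\bot}\|$, contradicting $0\in\conv\{u^1,\dots,u^l\}$ --- the same geometric fact in different packaging.
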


\begin{proof}
  For the \enquote{if}-direction, we easily see that if $K=\B$ then choosing $l=2$ and $u^2=-u^1$ and any
  $u$ orthogonal to $u^1$, we obtain
  $T \cap\S = \S \supset S^\ge_u$ and $\S^>_u \cap \bd(T) = \emptyset$.

  For proving the \enquote{only if}-direction let us assume $r(K) < 1$. Then, however
  $u^1,\dots,u^l$ and $u$ are chosen, they must satisfy $0 \in \conv\{u^1,\dots,u^l\}$ and thus there exists $j \in [l]$,
  \st~$u^Tu^j \ge 0$ and $u^j\in\S^\ge_u$.

  Since $c+r(K)\B\subset\B$, it holds $\norm[c]+r(K) \leq 1$ and therefore
  $c^Tu^j + r(K) \leq \norm[c]\norm[u^j] + r(K) = \norm[c] + r(K) \leq 1$, which, as $u^j \in \S$, implies
  $(u^j-c)u^j \geq r(K)$ and \enquote{$=$} holds, iff $c=(1-r(K))u^j$,
  which means $u^j = c + r(K)u^j$.

  Now, in case of $(u^j-c)u^j > r(K)$, it follows $u^j \notin c+\{x\in\R^n : x^Tu^j \leq r(K)\} \supset
  T \supset S^\ge_u$, contradicting $u^j \in S^\ge_u$.

  On the other hand, if $(u^j-c)u^j = r(K)$, it holds $u^j=c+r(K)u^j \in c+\{x\in K : x^Tu^j = r(K)\} \subset \bd(T)$.
  However, since $\S^>_u \cap \bd(T) = \emptyset$, it follows
   $u^j \in \S^\ge_u\setminus\S^>_u = \S \cap \{x:u^Tx=0\}$ and therefore $u^Tu^j=0$.
  Now, since $0\in\conv\{u^1,\dots,u^l\}$,
  there exists $k \in [l] \setminus \{j\}$, \st~$u^Tu^k \ge 0$. But, since $c+r(K)u^k \in \S$
  would mean that there exist two different points of $c+r(K)\B$ in $\S$, contradicting $r(K)<1$,
    we must have $c+r(K)u^k \notin \S$. Hence
  $(u^k-c)u^k > r(K)$ as shown above with $j$ instead of $k$,
  contradicting $u^k \in S^\ge_u$.
\end{proof}

\begin{lem} \label{lem:max width}
  Let $K\in\CK^2$ and $c\in\R^2$, \st~$c+r(K)\B \subset K \subset \B$, as well as
  $p^1,p^2,p^3$
  (possibly with $p^2=p^3$), $u^1,u^2,u^3$ (possibly with $u^2=u^3$), $T$, and $T'$ as in Corollary
    \ref{cor:separate} for the case $n=2$.
    The common supporting lines of $K$ and $c+r(K)\B$ with outer normals $u^1,u^2,u^3$ are denoted
    by $L_1,L_2,L_3$, respectively, the halfspaces induced by these lines containing $K$
    by $L_1^-,L_2^-,L_3^-$ (thus $T':= \conv\{p^1,p^2,p^3\}$ and
    $T := L_1^- \cap L_2^-\cap L_3^-$). Finally, define $C := T \cap\B$,
    and $S_i := L_i \cap C$, $i=1,2,3$.
  Then
  \begin{enumerate}[a)]
  \item the line segments of $T'$ separate the line segments $S_i$ of $T$ from 0 within $\B$.
  \item the length of each line segment $S_i$, $i=1,2,3$, is at most $D(K)$.
  \item the diameter of $C$ is taken between two points on different arcs of $C \cap \S$ or $D(C)=2$.
  \item there exist $q^1,q^2 \in C \cap \S$ \st~$\norm[q^1-q^2] = D(K)$
    and the segment $[q^1,q^2]$ separates one of the segments $S_i$, $i=1,2,3$, from the other
    two segments and the origin $0$ (see Figure \ref{fig:Lem5_6} as an example).
  \end{enumerate}
\end{lem}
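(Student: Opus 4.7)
The plan is to handle each of the four parts in turn, using Corollary \ref{cor:separate} as the main structural tool.

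Part (a) is almost immediate: since $0\in\conv\{p^1,p^2,p^3\}\subset T'$ and $T'$ separates $\bd(T)$ from $0$ by Corollary \ref{cor:separate}(b), and since each $S_i\subset\bd(T)\cap\B$ while $T'\subset K\subset\B$ and $S_i\subset C\subset\B$, the three edges of $T'$ separate each $S_i$ from $0$ entirely within $\B$.

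For part (b), Corollary \ref{cor:separate}(a) implies that at most one vertex of $T$ lies in $\inte(\B)$, so for each $i$ at least one endpoint of $S_i$ is on $\S$. The key geometric claim I would establish is that $S_i$ is contained in the circular cap of $\B$ bounded by some edge $[p^j,p^k]$ of $T'$ on the side of this chord opposite to $0$; since $0$ is on the other side of $[p^j,p^k]$, this cap is contained in a half-disk and hence has diameter equal to the chord length $\norm[p^j-p^k]\le D(K)$ (using $p^j,p^k\in K$). To justify the key claim I would case-analyse on $L_i\cap T'$, which (because $T'\subset L_i^-$) is either empty, a single vertex of $T'$, or a full edge of $T'$: in the empty case a supporting line separates $L_i$ from $T'$ and must coincide with the line of some edge of $T'$; in the edge case $S_i\subset L_i\cap\B=[p^j,p^k]$ directly; and in the vertex case $L_i$ is a supporting line of $T'$ at $p^j\in\S$ and the chord $L_i\cap\B$ extends from $p^j$ in a single direction into just one adjacent cap.

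For part (c), I would locate the extreme points of $C$: they lie either on the arcs of $C\cap\S$ or at the at-most-one vertex of $T$ in $\inte(\B)$. Since $D(C)$ is attained between two extreme points, I would case-analyse: if they lie on two different arcs we are done; if they lie on the same arc their distance is bounded by the length of the corresponding chord $S_i$ on the opposing edge of $T$, whose endpoints by (b) lie on two different arcs and realise at least that distance; the remaining case involving the interior vertex of $T$ is handled analogously by replacing it with an adjacent arc endpoint. The special case $D(C)=2$ accounts for antipodal pairs in $\S\cap C$.

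For part (d), $D(K)\le D(C)$ since $K\subset C$, and by (c) the diameter of $C$ is realised by some pair in $C\cap\S$. By continuity of the distance function on $C\cap\S\times C\cap\S$ together with an intermediate-value argument connecting a diametral pair of $C$ to a coincident pair, there exist $q^1,q^2\in C\cap\S$ with $\norm[q^1-q^2]=D(K)$. The main obstacle of (d) is the separation claim that the chord $[q^1,q^2]$ separates exactly one of the $S_i$ from the other two and from $0$; I would establish this by choosing $q^1,q^2$ on the two arcs adjacent to a specific $S_i$, relying on the cap description of (b) that isolates each $S_i$ within a specific region of $\bd C$. Handling this selection uniformly across all configurations of $T$ and $T'$ will be the main technical challenge.
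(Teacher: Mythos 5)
Your parts (a) and (b) are essentially the paper's own argument: (a) is just Corollary \ref{cor:separate}(b) read in the plane, and (b) is the observation that $S_i$ lies in a cap of $\B$ cut off by an edge of $T'$ with $0$ on the other side, so its length is at most that of the separating chord, whose endpoints lie in $K$. These are fine.

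The genuine gap is in part (c), which is the heart of the lemma. To exclude a diametral pair of $C$ lying on a single arc of $C\cap\S$ you assert that the distance of two points on that arc is bounded by the length of the chord $S_k$ on the opposing edge of $T$. You give no argument for this, and nothing you have set up yields it: the arc between $S_1$ and $S_2$ is not contained in any cap over $S_3$, and for a generic triple of chords of $\B$ the claim is false (two chords cutting off small caps leave an arc whose endpoints are far apart while the third chord may be short). What actually rules out the bad configurations is the interplay of $R(K)=1$ with the inball optimality condition $0\in\conv\{u^1,u^2,u^3\}$, which your part (c) never invokes. The paper packages this as Lemma \ref{lem:semisphere}: unless $K=\B$, every arc of $C\cap\S$ is at most an open semicircle, so a same-arc diametral pair must sit at the arc's endpoints $\bar z^1\in L_1\cap\S$, $\bar z^2\in L_2\cap\S$; a further case distinction (whether $L_1,L_2$ meet $\inte(\B)$, using $0\in\conv\{u^1,u^2,u^3\}$ again, versus some $L_i$ tangent to $\S$ and hence the inball touching $\S$) then exhibits points of $C$ strictly farther apart than the alleged diameter, a contradiction. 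Some argument of this strength is unavoidable. Your interior-vertex case is also not \enquote{analogous}: there the paper proves $D(C)=2$ by applying Proposition \ref{prop:CONT}(a) to $C$ itself, forcing an antipodal pair onto the two remaining arcs. Finally, in part (d) you defer the selection of $q^1,q^2$ as a \enquote{technical challenge}; the paper closes it in one line by keeping $q^1,q^2$ in the same two arcs as the diametral pair and shortening the chord to length $D(K)$, which preserves the separation of the single segment $S_i$ exactly because part (b) gives that the length of $S_i$ is at most $D(K)$ --- your intermediate-value path towards a coincident pair risks destroying the separation and should be replaced by that argument.
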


\begin{figure}
  \centering
  \includegraphics[trim = 1cm 3cm 1cm 3cm, width = 8cm]{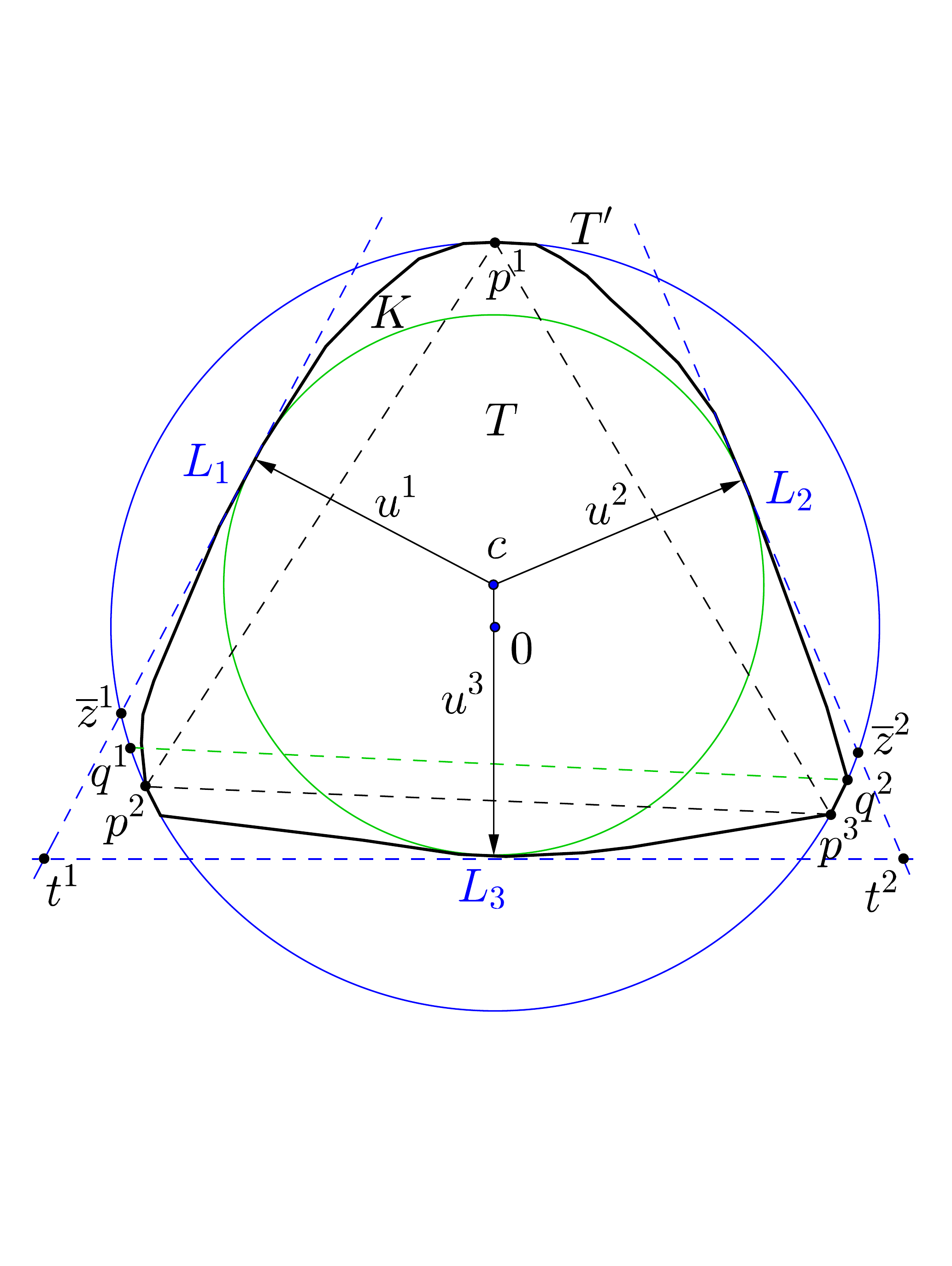}
  \caption{A convex set $K$ and all elements of Lemma \ref{lem:max width}. Observe that
    $q^1\notin K$.}
  \label{fig:Lem5_6}
\end{figure}

\begin{proof}
  \begin{enumerate}[a)]
  \item This is a direct interpretation of Part (b) of Corollary \ref{cor:separate} in $\R^2$ (but only there).

  \item If the length of $S_i$ would be greater than $D(K)$, the same would be true for the segment of $T'$
    separating $S_i$ from 0, a contradiction as $T' \subset K$.

   \item By Proposition \ref{prop:exposed}, there exist extreme points $z^1,z^2$ of $C$, \st~$\norm[z^1-z^2] = D(C)$.
    Using Part (a) of Corollary \ref{cor:separate}, we distinguish the cases where no or one vertex of
    $T$ belongs to $\inte(\B)$.

    In the first case, it holds $z^1,z^2 \in\ext(C) = C\cap\S$
    We show that if $K \neq \B$, then $z^1$ and $z^2$ do
    not belong to the same arc of $C\cap\S$.
    If they do, we denote by $\bar{z}^i$, $i=1,2$ the one of the two points in $L_i \cap \S$,
    which is in the same arc of $C\cap\S$ than $z^i$ (see again Figure \ref{fig:Lem5_6}).
    (defining that, if a line intersects $\S$ in a single point, then
    it separates two different arcs).

    Using Lemma \ref{lem:semisphere}, we know that if $K \neq \B$ the arc containing $z^1, z^2$
    is at most an open semisphere.
    Hence $D(C) = \norm[z^1-z^2] \le \norm[\bar{z}^1 - \bar{z}^2] <2$ and therefore $\bar{z}^i=z^i$,
    $i=1,2$.

    We first consider the case
    that $L_i \cap \inte(\B) \neq \emptyset$, $i=1,2$.
    Since $0 \in \conv\{u^1,u^2,u^3\}$ the lines $L_i$, $i=1,2$
    are parallel or intersect in a vertex of $T$ on the same side of 0 than the segment $[z^1,z^2]$.

    However, in any of the two cases,
    the distances between $z^1$ and any point in $L_2\cap\inte(\B)$ or the distance between $z^2$ and any point
    in $L_1\cap\inte(\B)$ is strictly bigger than $\norm[z^1-z^2]=D(C)$, a contradiction.

    Now turn to the case that
    $L_i \cap \inte(\B) = \emptyset$ for at least one $i \in \{1,2\}$, \Wlog for $i=1$,
    which means $\{z^1\} = L_1 \cap \S$ thus $L_1$ supports $\B$ in $z^1$.
    However, since $L_1$ supports $c+r(K)\B$ by definition, we obtain that it must support $c+r(K)\B$ in $z^1$.
    Using the fact that the arc containing $z^1,z^2$ is at most an open semisphere, we have
    $z^2 \neq -z^1$ and therefore $D(C) \ge D(\conv(\{z^2\},c+r(K)\B) > \norm[z^1-z^2] = D(C)$,
    again a contradiction.

    Finally consider the case that one vertex of $T$ belongs to $\inte(\B)$.
    Then $C \cap \S$ contains at most two arcs.
    Applying Part (a) of Proposition \ref{prop:CONT} for $C$, there exist $p^1,p^2,p^3$ in this two arcs,
    \st~$0 \in \conv\{p^1,p^2,p^3\}$. However, as two of the $p^i$ have to be on the same arc, the negative of the third
    has to be on that arc, too, proving $D(C)=2$ for that case.

  \item In case of $K=\B$ the claim is trivially true. Hence we may assume $K \neq \B$.

    Using Part (a) of Corollary \ref{cor:separate}, we distinguish again the cases with no or one vertex of
    $T$ belonging to $\inte(\B)$.

    In the first case, it was shown in Part (c) that any pair of diametral points
    $z^1$ and $z^2$ of $C$ lie in different arcs of $C\cap\S$.
    But this means that $[z^1,z^2]$ separates one of the segments $S_i$ from the other
      two segments and the origin $0$, say $S_3$ (\cf~Figure \ref{fig:Lem5_6}).
    From Part (b) we know that the length of $S_3$ is at most $D(K) \le D(C) = \norm[z^1-z^2]$.
    Hence there exist $q^1$ and $q^2$ in the same arcs as $z^1$ and $z^2$, respectively,
    \st~$\norm[q^1-q^2]=D(K)$ and $[q^1,q^2]$ still separates $S_3$
    in the same way as $[z^1,z^2]$ does.

    If a vertex of $T$ belongs to $\inte(\B)$, it follows from
        Part (c) that $D(C)=2$, which means $z^2=-z^1$.
    Thus $[z^1,z^2]$ separates the two segments intersecting in $\inte(\B)$ from the third.
    However, again because of Part (b) there must exist
    $q^1$ and $q^2$ with $\norm[q^1-q^2]= D(K)$ separating this third segment from the other two and 0.
  \end{enumerate}
\end{proof}

\begin{lem} \label{lem:max width 2}
  Consider the same setting and notation as in Lemma \ref{lem:max width}.
  In the following we assume that the single separated segment in Part (d) of Lemma \ref{lem:max width} is $S_3$
  and \Wlog that $S_3$ is horizontal below $0$ as well as separated by $[q^1,q^2]$ from $S_1$, $S_2$, and $0$.
  Moreover, we denote the point in $C$ farthest from $L_3$ by $y$, the intersection points of $L_i$, $i=1,2$
  with $L_3$ by $t^i$, $i=1,2$, respectively,
  and assume that $t^1_1 \le 0 \le t^2_1$ (which is possible when $S_3$ is horizontal and means
    that $L_1$ bounds $S_3$ on the left while $L_2$ bounds $S_3$ on the right, see~Figure \ref{fig:Lem5_6}).
  \begin{enumerate}[a)]
  \item The first coordinate of the intersection points of $L_1$ and $\S$ is bounded from above
      by $\nicefrac{D(K)}{2}$ while
      the first coordinate of the intersection points of $L_2$ and $\S$ is bounded from below
      by $\nicefrac{D(K)}{2}$.
  \item It holds $|y_1| \le \nicefrac{D(K)}{2}$.
  \item One can modify the choice of $q^1$ and $q^2$ satisfying Part (d) in  Lemma \ref{lem:max width},
    \st~the interior angles of $\conv\{y,q^1,q^2\}$ in $q^1$ and $q^2$ are at most
    $\nicefrac \pi 2$.
  \end{enumerate}
\end{lem}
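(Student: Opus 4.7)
I would prove the three parts in order: part (a) provides the foundational bound, part (b) follows by a brief case analysis on the location of $y$, and part (c) combines parts (a) and (b) with a sliding argument.

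For part (a), I plan to argue both bounds by contradiction using part (d) of Lemma \ref{lem:max width} (with the stated bound for $L_2$ corrected to a lower bound by $-\nicefrac{D(K)}{2}$). Suppose some $a \in L_1 \cap \S$ satisfies $a_1 > \nicefrac{D(K)}{2}$. Since the left arc of $C \cap \S$, in which $q^1$ lies, is bounded above by $L_1 \cap \S$, pushing $q^1$ to the extremal position forces it arbitrarily close to $a$. Then $q^2 \in \S$ with $\norm[q^1 - q^2] = D(K)$ is pinned down to one of the two intersection points of $\S$ with the circle of radius $D(K)$ around $a$, and a direct chord-geometry check on $\S$ using $a_1 > \nicefrac{D(K)}{2}$ shows that neither possibility for $q^2$ can both lie in the right arc of $C \cap \S$ and have $[q^1, q^2]$ separate $S_3$ from $0$, contradicting part (d) of Lemma \ref{lem:max width}. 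The corresponding lower bound for $L_2$ is obtained by the symmetric argument.

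For part (b), I would proceed by a short case analysis on the location of $y$. Since $\bd(C)$ is the union of arcs of $\S$ and the three segments $S_1, S_2, S_3$, and $S_3 \subset L_3$ achieves the minimum (not the maximum) of distance from $L_3$, the maximizer $y$ must lie on the ``upper'' part of $\bd(C)$: either (i) in the relative interior of an arc of $C \cap \S$, (ii) at a corner where $\S$ meets $L_1$ or $L_2$, or (iii) at the apex vertex $L_1 \cap L_2$ of $T$ provided it lies in $\B$. In case (i) the farthest point of a circular arc from the horizontal line $L_3$ is the topmost point of $\S$ lying in that arc, which has first coordinate $0$; in cases (ii) and (iii) $y$ lies on $L_1$ or $L_2$, and part (a) directly yields $|y_1| \le \nicefrac{D(K)}{2}$.

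For part (c), I plan a continuity/sliding argument. Parametrize the admissible pairs $(q^1, q^2)$ satisfying part (d) of Lemma \ref{lem:max width} by the angular position of their chord's midpoint on the circle of radius $\sqrt{1 - (\nicefrac{D(K)}{2R(K)})^2}$ around $0$; as this angle varies, $(q^1, q^2)$ slides continuously along the left and right arcs respectively. The conditions that the angles at $q^1$ and $q^2$ in $\conv\{y, q^1, q^2\}$ both be at most $\nicefrac{\pi}{2}$ are equivalent to the orthogonal projection of $y$ onto the chord line lying inside $[q^1, q^2]$. Using part (b), $|y_1| \le \nicefrac{D(K)}{2} = \nicefrac{1}{2} \norm[q^1 - q^2]$, so for the horizontal chord of $\S$ of length $D(K)$ located below $0$ (whose endpoints have first coordinates $\pm\nicefrac{D(K)}{2}$), the projection of $y$ falls inside $[q^1, q^2]$, verifying both angle conditions. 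The main obstacle is confirming this horizontal chord is itself admissible; if it is not, I would apply the intermediate value theorem to the two continuously varying signed projection parameters along the sliding path, exploiting continuity to locate an admissible position where both parameters lie in $[0, \norm[q^1-q^2]^2]$.
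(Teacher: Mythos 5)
Your part (b) is sound and is essentially the paper's argument: the point of $C$ farthest from the horizontal line $L_3$ is either $e^2$, a point of $L_i\cap\S$, or the vertex $L_1\cap L_2$, and part (a) handles the latter two cases. (Your reading of the $L_2$-bound as $-\nicefrac{D(K)}{2}$ is also the correct one.) The genuine problems are in parts (a) and (c). In part (a) you argue by contradiction that no admissible pair $(q^1,q^2)$ could exist, pushing $q^1$ to the end of the left arc of $C\cap\S$. But that end is the \emph{lower} intersection point $\bar z^1$ of $L_1$ with $\S$, the one for which the bound is immediate (it lies at or to the left of $q^1$, whose first coordinate is $-\nicefrac{D(K)}{2}$). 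The point that actually needs the bound -- and the one used in part (b) -- is the \emph{other} intersection point $x^1$, which lies beyond $S_1$, far from the left arc; its position places no constraint on which pairs $(q^1,q^2)$ are admissible, so no contradiction with Part (d) of Lemma \ref{lem:max width} can be extracted from $x^1_1>\nicefrac{D(K)}{2}$ in the way you propose. The paper instead derives the contradiction from the length bound on the segments $S_i$: if $x^1_1>\nicefrac{D(K)}{2}$ while $\bar z^1_1\le q^1_1=-\nicefrac{D(K)}{2}$, then $S_1$ would be strictly longer than $D(K)$, contradicting Lemma \ref{lem:max width}. Your proof of (a) never invokes this length bound, and without it the claim for $x^1$ is not established.

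In part (c) the pivotal object is the horizontal chord of $\S$ of length $D(K)$ below $0$, and everything hinges on its admissibility (endpoints in the two arcs flanking $S_3$, and separation of $S_3$ from $0$). This can genuinely fail: that chord sits at the fixed height $-\sqrt{1-D(K)^2/4}$, which may lie below $L_3$, in which case its endpoints are not even in $C$ and it separates nothing. Your IVT fallback is then circular: since the two signed projection parameters $f,g$ satisfy $f+g=\norm[q^1-q^2]^2>0$, at most one of the two angle conditions fails at a time, and the intermediate value theorem would indeed rescue you \emph{provided} you can exhibit some admissible chord position at which the currently failing condition holds -- but the only such position you name is the horizontal one, whose admissibility is exactly what is in doubt. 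The paper closes this step by a targeted geometric analysis: by Thales the bad region for $y$ is a union of two caps; if $y$ lies in the cap cut off by $\aff\{-q^1,q^2\}$, the length bound on $S_1$ forces $S_1$ to cross $[-q^2,-q^1]$, which forces $[q^1,q^2]$ to be ascending with larger slope than $S_1$; sliding the chord until it is parallel to $S_1$ keeps it ascending (hence preserves the separation of $S_3$) and moves $y$ out of the bad region. Some replacement for this -- an explicitly admissible target position at which the failing angle condition is restored -- is what your proof is missing.
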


\begin{proof}
  \begin{enumerate}[a)]
  \item It suffice to show the upper bound in case of $L_1$.
    Since $S_3$ is the separated segment, it follows $t^1,t^2 \not\in \inte(\B)$ and since $S_3$ is horizontal,
    (a) is obviously true for
    $\bar z^1$. Now we denote the further one by $x^1$ and assume $x^1 \ge 0$ as otherwise there is nothing to show.
    Since $S_3$ is horizontal, we have $t^1_1 \leq 0$ and $t^1_2\leq x^1_2$, which means
    $L_1$ has a positive slope.
    Now, assuming $x^1_1>\nicefrac{D(K)}{2}$ implies together with $\bar z^1_1 \le q_1^1 = - \nicefrac{D(K)}2$
    that the length of $S_1$ is strictly greater than $D(K)$,
    which contradicts Part (c) of Lemma \ref{lem:max width}.

  \item Again, it suffices to show $y_1 \le \nicefrac{D(K)}{2}$, because of symmetry in the argument.
    If $L_1$ and $L_2$ intersect within $\inte(\B)$,
    they must intersect in $y$.
    Hence $y_1 \le x^1_1 \le \nicefrac{D(K)}{2}$
    using the notation as in Part (a).
    Otherwise $y$ lies on the arc of $C \cap \S$ between $x^1$ and $x^2$, its corresponding point
      in $L_2 \cap \S$. However,
    with $e^2$ denoting the second unit vector,
    that would mean $y \in \{x^1,x^2,e^2\}$, which again proves the claim because of Part (a).

  \item We suppose \Wlog that $q^i$, $i=1,2$, belong to the arc of $\S$ induced by $S_i$
    and $S_3$, $i=1,2$, respectively. First, it follows from Thales' theorem
    that the region $R \subset\B$ on the same side as 0 of
    $[q^1,q^2]$, for which one of the angles would be bigger than $\nicefrac \pi 2$, is
    the union of the caps of $\B$ induced by $\aff\{q^1,-q^2\}$ and $\aff\{-q^1,q^2\}$,
    without $\aff\{q^1,-q^2\}$ and $\aff\{-q^1,q^2\}$ themselves.

    As we have seen in Part (b), using the notation there
    and denoting the intersection of $L_1$ and $L_2$ by $t^3$, it holds $y \in \{x^1,x^2,e^2,t^3\}$.
    In any of the four cases $[q^1,q^2]$ separates $y$ from $S_3$.

    Now, we describe the choice of $q^1$ and $q^2$ satisfying Part (c) for each of the possible $y$'s:
    Since $q^1_1 < 0$ and $q^2_1 > 0$, we obviously have $y \notin R$, if $y=e^2$.
    To obtain $y=x^i$, $i=1,2$, it must hold that $e^2$ does not belong to $C$, which means
    either $x_1^1 > 0$ and $y=x^1$ or $x_2^1 < 0$ and $y=x^2$. Hence we may assume \Wlog that $y=x^1$ or
    $y=t^3$ and that, if $y \in R$, then it is contained in the cap induced by
    $\aff\{-q^1,q^2\}$. Using Part (c) of Lemma \ref{lem:max width}, we know that the length of
    $S_1$ is at most $D(K)=\norm[q^1-q^2]$.

    Since $y$ is contained in the cap of $\B$ between $q^2$ and $-q^1$
    this is only possible if $S_1$ cuts through one of the segments $[q^1,q^2]$ or $[-q^2,-q^1]$
    (otherwise $S_1$ would be longer than $\norm[q^1-q^2]=D(K)$, a contradiction).
    However, the first case would contradict that $[q^1,q^2]$ separates $S_1$ from $S_3$.
    Hence $S_1$ must intersect $[-q^2,-q^1]$.
    Using $S_3$ being horizontal and therfore $S_1$ being ascending (and $S_2$ descending) as well as
    the fact that $-q^1$ is above $y$,
    $[q^1,q^2]$ must be ascending
    even with a bigger slope than $S_1$, since othwerwise they could not intersect.
        But then we may move $q^1, q^2\in C\cap\S$ inside the same arcs and keeping their distance, until $[q^1,q^2]$
    becomes parallel to $S_1$, but stays ascending, and therefore not annihilating the separation of $S_3$.
    Rebuilding $R$ from the new vectors $q^1,q^2$, we obtain $y \notin R$.
  \end{enumerate}
\end{proof}

Before we state the follwing Lemma, remember that we know from $(ub_2)$ and $(ub_3)$
in Subsection \ref{ss:facets}, that for every diameter $D \in [\sqrt 3,2]$
and inradius $r \in  [r(\Iso[2\arccos(\nicefrac{D}{2})]),r(\CSB[\arcsin(\nicefrac{D}{2})])]$
there exist triangles $\Triangle$ -- in case of
$r \leq r(\Iso[\arcsin(\nicefrac{D}{2})])$ --
or sailing boats $\SBoat[r][\arcsin(\nicefrac{D}{2})]$ -- in case of $r \geq r(\Iso[\arcsin(\nicefrac{D}{2})])$.

\begin{lem}\label{lem:triangles and sail boats}
Let  $D \in [\sqrt 3,2]$ and
$r(\Iso[2\arccos(\nicefrac{D}{2})]) \le r \le r(\CSB[\arcsin(\nicefrac{D}{2})])$.
Then for all $K\in\CK^2$, \st~$D(K)=D$ and $r(K)=r$ there exists
\begin{enumerate}[a)]
\item a triangle $\Triangle$, \st~$w(K) \le w(\Triangle)$, if $r \le r(\Iso[\arcsin(\nicefrac{D}{2})])$,
\item a sailing boat $\SBoat[r][\arcsin(\nicefrac{D}{2})]$, \st~$w(K) \le w(\SBoat[r][\arcsin(\nicefrac{D}{2})])$,  if $r \ge r(\Iso[\arcsin(\nicefrac{D}{2})])$.
\end{enumerate}
\end{lem}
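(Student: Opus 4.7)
The plan is to pass from $K$ to the enclosing figure supplied by Lemmas \ref{lem:max width} and \ref{lem:max width 2}, and then identify the extremal configuration. Normalizing $R(K)=1$, I would reproduce from those lemmas the tangent lines $L_1, L_2, L_3$, their intersection $T$, the body $C = T\cap\B \supset K$, the diametral pair $q^1,q^2 \in C\cap\S$ at distance $D$ separating $S_3$ from $\{S_1, S_2, 0\}$, and the point $y \in C$ farthest from $L_3$. Choosing coordinates so that $S_3$ is horizontal below $0$, the breadth of $K$ in the direction $-u^3$ is at most $\dist{y}{L_3}$, hence $w(K)\le \dist{y}{L_3}$; it therefore suffices to dominate $\dist{y}{L_3}$ by $w(\Triangle)$ in regime~(a) and by $w(\SBoat[r][\arcsin(\nicefrac{D}{2})])$ in regime~(b).

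The next step is to treat $\dist{y}{L_3}$ as a function of the inclinations of $L_1, L_2$ and the position of the incenter $c$, subject to the constraints that the inball is $c+r\B$, that $\|q^1-q^2\|=D$ with $q^1, q^2$ in the correct arcs of $\S$, and that the bounds $|y_1|\le \nicefrac{D}{2}$ and the right-angle condition at $q^1,q^2$ from Lemma \ref{lem:max width 2} hold. The feasible set is compact, so a maximum is attained. A symmetrization argument -- reflecting across the vertical line through $c$ and taking the more favourable of the original and the reflected configuration -- shows that the extremum may be assumed at a mirror-symmetric configuration in which $u^1, u^2$ are mutual reflections and $c$ lies on the axis of symmetry.

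In this symmetric setup, the geometry collapses to a one-parameter family indexed by the half-angle $\gamma/2$ at the apex $t^3 = L_1\cap L_2$; the constraint $\|q^1-q^2\|=D$ then fixes $\gamma$ via $D=D(\Iso[\gamma])$. When $t^3 \in \inte(\B)$, the body $C$ is the acute isosceles triangle of circumradius $1$, diameter $D$ and inradius $r$, which is precisely the triangle $\Triangle$ of part (a), and $\dist{y}{L_3}=w(\Triangle)$ follows from the altitude formula recorded in~$(ub_3)$ of Subsection \ref{ss:facets}. When $t^3 \notin \inte(\B)$, $C$ is the concentric sailing boat $\SBoat[r][\arcsin(\nicefrac{D}{2})]$ of part (b), and $\dist{y}{L_3}=w(\SBoat[r][\arcsin(\nicefrac{D}{2})])$ is the width computed in $(ub_2)$. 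The threshold $t^3 \in \S$ corresponds to $r=r(\Iso[\arcsin(\nicefrac{D}{2})])$, which matches the case split in the statement.

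The hard part is the symmetrization step: one must verify that the deformation toward the mirror-symmetric configuration cannot decrease $\dist{y}{L_3}$ while keeping the inradius equal to $r$ and the chord length $\|q^1-q^2\|$ equal to $D$. I expect this to follow from the explicit bounds of Lemma \ref{lem:max width 2} combined with concavity of the altitude as a function of the inclination parameters, but the degenerate limits -- when two of the $u^i$ coincide and $T$ degenerates into a strip, or when $q^i$ coincides with a vertex of $T$ lying on $\S$ -- need to be handled separately as limits of the generic case.
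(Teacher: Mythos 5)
Your reduction to $C=T\cap\B$ and the bound $w(K)\le b_{u^3}(C)=\dist{y}{L_3}$ is exactly the paper's starting point, but the core of your argument --- the symmetrization step --- is both unproved and, in regime (a), provably wrong. First, the mechanism you propose does not symmetrize anything: reflecting the configuration across the vertical line through $c$ is an isometry, so the reflected configuration has the same value of $\dist{y}{L_3}$ as the original, and \enquote{taking the more favourable of the two} yields no information about where the maximum is attained. A genuine symmetrization would have to construct a new, symmetric configuration and prove that the functional does not decrease under the passage to it; that is precisely the step you defer to an unverified \enquote{concavity of the altitude}. Second, and more seriously, the conclusion you want the symmetrization to deliver is false whenever $r<r(\Iso[\arcsin(\nicefrac{D}{2})])$: an isosceles triangle inscribed in $\B$ is determined up to congruence by its diameter, so its inradius is already fixed and there is no \enquote{acute isosceles triangle of circumradius $1$, diameter $D$ and inradius $r$} for such $r$. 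The extremal triangle $\Triangle$ of part (a) is the \emph{scalene} acute triangle interpolating between $\Iso[\gamma_1]$ and $\Iso[\gamma_2]$ described in $(ub_3)$ of Subsection \ref{ss:facets}; it has no mirror symmetry, so the maximizer of $\dist{y}{L_3}$ under your constraints cannot be assumed symmetric, and no symmetrization argument can reach it.

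The paper avoids this trap by never passing to a symmetric configuration: it deforms $C$ through the explicit moves (i)--(v) --- rotating $L_1,L_2$ so that they contain $q^1,q^2$ while staying tangent to $c+r(K)\B$, sliding the inball until $y\in L_1\cap L_2$, applying a rigid motion making $[q^1,q^2]$ parallel to $L_3$ at distance $\norm[q^1-q^2]=D(K)$, and finally pushing $y$ onto $\S$ towards $e^2$ or $[q^1,q^2]$ onto $L_3$ --- verifying at each step that $\dist{y}{L_3}$ does not decrease while $r$ and $R$ are preserved. The terminal objects of that deformation are exactly $\Triangle[r][D]$ or $\SBoat[r][\arcsin(\nicefrac{D}{2})]$, including the scalene triangles that your symmetric ansatz excludes. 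If you wish to keep a variational formulation, you would have to analyse the critical points of $\dist{y}{L_3}$ over the full, non-symmetric feasible set, which in effect forces you back into the paper's case-by-case deformation.
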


\begin{proof}
Let $c\in\R^2$, \st~$c+r(K)\B$ and $\B$ are the in- and circumball of $K$, respectively.
Using the notation as given in Lemma \ref{lem:max width},
remember that $R(C)=R(K)$ and $r(C)=r(K)$, whereas
the monotonicity of the radii with respect to set inclusion implies $D(C) \geq D(K)$ and
$w(C)\geq w(K)$.

The idea of the proof is to transform $C$ in several steps into some
triangle or sailing boat $\bar C$
satisfying $R(\bar C)=R(K)$, $r(\bar C)=r(K)$, $D(\bar C) = D(K)$, and $w(\bar C) \ge w(K)$.

More precisely, denoting the breadth of $C$ in direction of $u^3$ by $b_{u^3}(C)$, we know
from the definitions of the width and the point $y$ in Part (b) of Lemma \ref{lem:max width 2}, which is the
farthest point in $C$ from $L_3$,
that $w(C) \le b_{u^3}(C)=\dist{y}{L_3}$.

Now, in every step of the transformation of $C$, we will increase the breadth in direction of $u^3$, but arriving
in $\bar C$ it even holds $w(\bar C) = b_{u^3}(\bar C)$ (as we have seen when defining the triangle and sailing boat families).

\begin{figure}
  \centering
  \begin{subfigure}[b]{0.45\textwidth}
    \includegraphics[trim = 1cm 3.5cm 1cm 2cm, width =\textwidth]{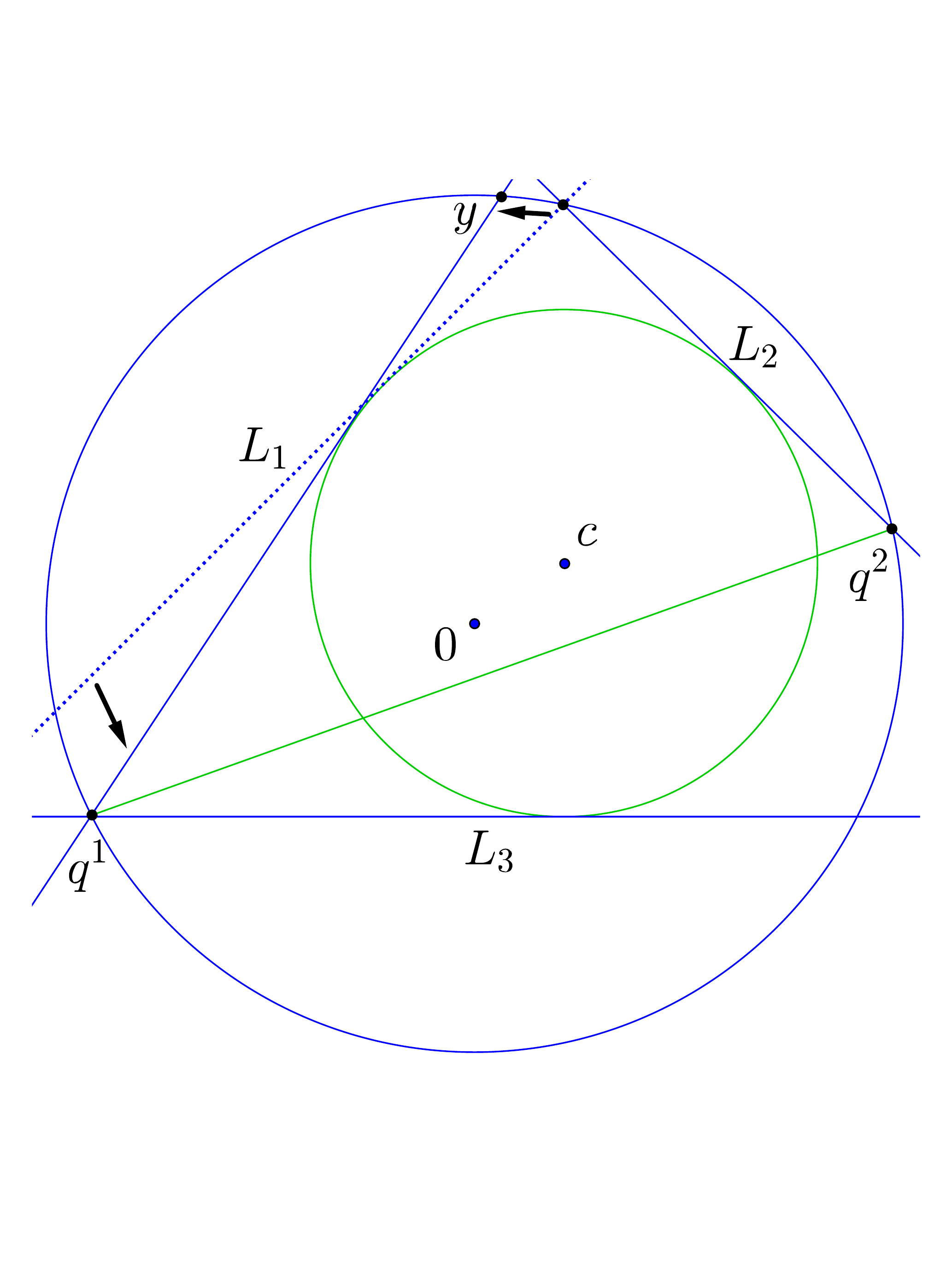}
    \caption{In (i) the lines $L_1, L_2$ may be rotated around $c+r\B$.\\ \hfill}
    \label{fig:Lem5.8Fig1}
  \end{subfigure} \hfill 
  \begin{subfigure}[b]{0.45\textwidth}
    \includegraphics[trim = 1cm 3.5cm 1cm 2cm, width = \textwidth]{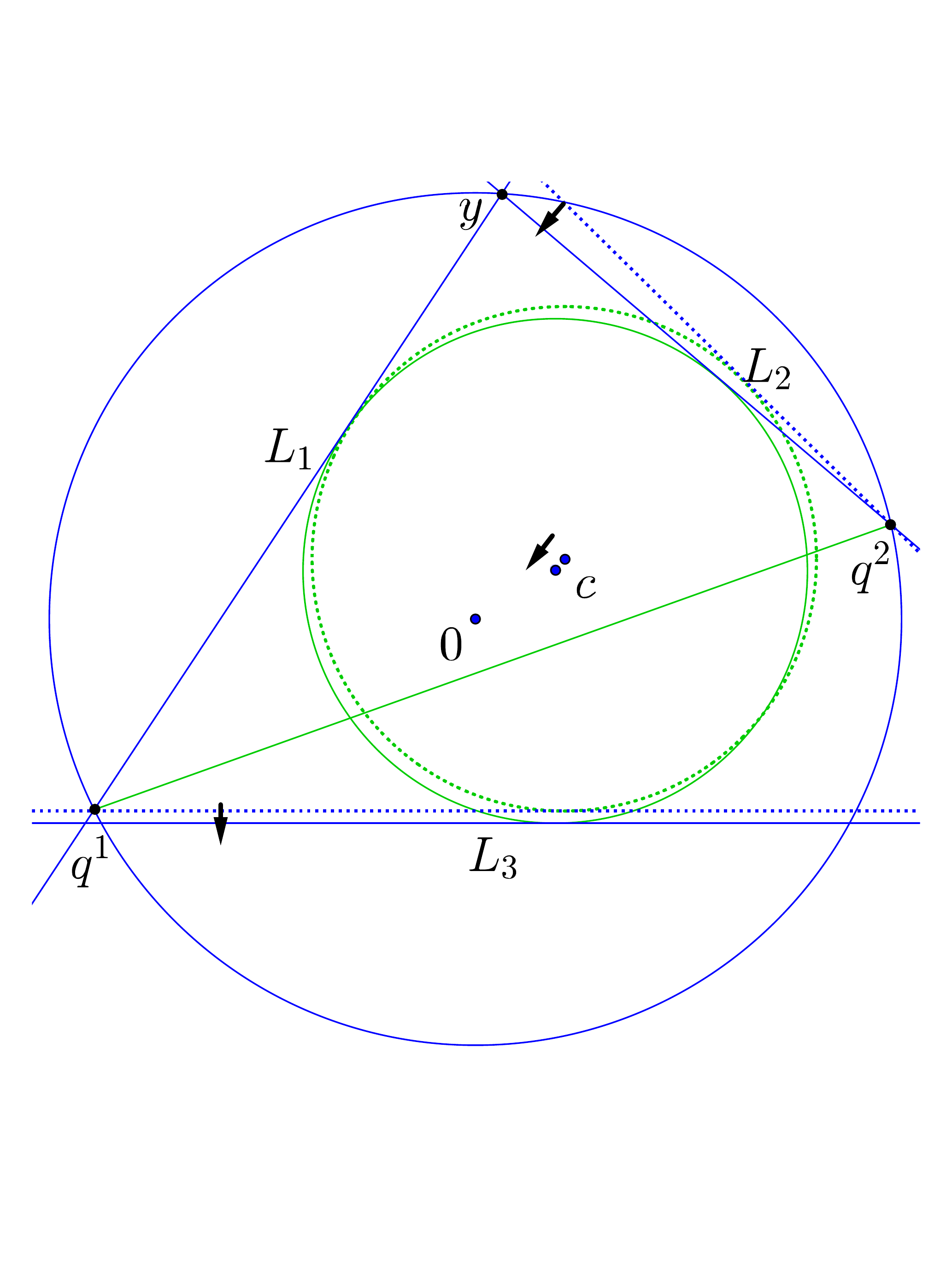}
    \caption{In (ii) we may move $c+r\B$ downwards while $L_1$ (and/or $L_2$)
        may be rotated around $q^1$ (and/or $q^2$).}
    \label{fig:Lem5.8Fig2}
  \end{subfigure}
  \caption{Examples for (i) and (ii) in Lemma \ref{lem:triangles and sail boats}.
    Here and in Figures 21 to 23 the start and the end of a movement are indicated by dotted and,
    respectively, full lines.}
  \label{fig:Lem5.8Step12}
\end{figure}

\begin{enumerate}[(i)]
\item Rotate the lines $L_1$ and $L_2$, \st~they keep supporting $c+r(K)\B$ and contain $q^1$ and $q^2$,
  respectively, thus also keeping the separation of  $S_1,S_2$ from $S_3$ by $[q^1,q^2]$.
  In the degenerate case of only two supporting parallel hyperplanes to $c+r(K)\B$ (which means
    by the choices in the proof of Lemma \ref{lem:max width} that $u^1=u^2$),
    we substitute $L_1$ by two lines $L_1$ and $L_2$ supporting $c+r(K)\B$ and containing $q^1$ and $q^2$,
    respectively, \st~$0 \in \inte(\conv\{u^1,u^2,u^3\})$ and arrive in the same situation then in the
    non-degenerate case.

  Thus, the $y$ we have before the change still belongs to $C$ afterwards and
  therefore the new $y$ (the point at maximum distance from $L_3$ within the new $C$)
  is not closer to $L_3$ than before. Applying Parts (b) and (c) of Lemma
  \ref{lem:max width 2} for the new $C$, we still have that $|y_1|\leq\nicefrac{D(K)}{2}$
  and that $\conv\{y,q^1,q^2\}$ has interior angles in $q^1$ and $q^2$
  at most $\nicefrac \pi 2$.

\item This step is only needed, if $L_1\cap L_2 \notin \B$, which means that $y \in \{x^1,x^2,e^2\}$.
  First, as long as $y \notin L_2$ we translate $c+r(K)\B$
  downwards, parallel to $L_1$, rotate $L_2$ around the point $q^2$
  and move $L_3$ parallel to its prior position, \st~$L_2$ and $L_3$
  keep supporting $c+r(K)\B$. Afterwards, as long as $y \notin L_1$
  we translate $c+r(K)\B$ downwards, parallel to $L_2$, rotate $L_1$
  around the point $q^1$ and move $L_3$ again parallel to its prior position,
  \st~$L_1$ and $L_3$  keep supporting $c+r(K)\B$.
  In the end $y$ is in $L_1 \cap L_2 \cap \S$
  and since $L_3$ moves always vertically downwards, but $y$ stays equal,
  the distance $\dist{y}{L_3}$ does not decrease.

\item Since the inner angles of $\conv\{y,q^1,q^2\}$ in $q^1$ and $q^2$ are at most
  $\nicefrac{\pi}{2}$, the distance of the intersection points
  $t^i$, $i=1,2$, of $L_3$ with $L_i$, $i=1,2$,
  is at least $\norm[q^1-q^2] = D(K)$.

  Hence there exist $\bar q^i \in L_i$, $i=1,2$, \st~$[\bar q^1, \bar q^2]$ is
  parallel to $L_3$ and $\norm[\bar q^1 - \bar q^2] = D(K)$. We
  move $T$ until $\bar q^i = q^i$, $i=1,2$,
  (differently to $\bar q^i$, $i=1,2$, the $q^i$'s are not fixed to the construction of $T$
    and therefore are not affected by the rotation)
  and afterwards
  rotate everything to have $L_3$ again horizontal (see Figure \ref{fig:Lem5.8Fig3}).

  From (ii) it follows $y \in L_1\cap L_2 \subset \B$ and since $T$ is only moved in (iii)
  the angle in $y$ of $T$ does not change. Hence Proposition \ref{lem:angle} with $q^3=y$
  implies that after the movement the vertex $y$ is still in $\B$, and
  moreover, if $y\in\S$ before the movement, it will be in $\S$ after, too.
  Since we just do a solid motion on $T$, the distance $\dist{y}{L_3}$ keeps constant
  in (iii).

\item[(iv)] If $y \in \S$, we move $\{y\} = L_1 \cap L_2$ around $\S$ towards $e^2$ and
  $L_1$ and $L_2$ with it.
  The inball is moved, \st~it remains tangent to both $L_1$ and $L_2$ and the
  line $L_3$ parallel to its prior position to keep tangent
  to the inball.
  We stop, when $y=e^2$ (see Figure \ref{fig:Lem5.8Fig4}) or $L_3$ contains the segment $[q^1,q^2]$
  (whichever comes first -- the first meaning that we arrive in a sailing boat, the latter
    that we arrive at a triangle).

  Before and after the transformation the inradius and the angles in the points $y$
  of the two triangles coincide (see Proposition \ref{lem:angle}),
  while the line passing through the incenter $c$
  and $y$ becomes closer to be perpendicular to $L_3$.
  Hence the distance $\dist{y}{L_3}$ does not decrease under this movement.

  If $y=e^2$, then $C=\SBoat[r(K)][\gamma(D(K))]$, otherwise, if $L_3$ contains the segment $[q^1,q^2]$,
  $C =\Triangle[r(K)][D(K)]$. In both cases
  $r(C)=r(K), D(C)=D(K)$, $R(C)=R(K)$,
  and $w(C) = \dist{y}{L_3} \ge b_{u^3}(K) \ge w(K)$ holds.

\item[(v)] If $y \in \inte(\B)$,
  we rotate the lines $L_1, L_2$ around $q^1,q^2$, repectively,
  \st~$y = L_1 \cap L_2$ moves along $\aff\{y,c\}$ away from $c$.
  The inball moves, \st~it remains tangent to $L_1$ and $L_2$, and
  $L_3$ is shifted upwards, parallel to its original position to
  remain tangent to the inball.

  The change finishes when $y \in \S$ or the line
  $L_3$ contains the segment $[q^1,q^2]$.

  Before and after the movement the triangle $T$ has the same inradius and
  $\aff\{y,c\}$ has the same angle with respect to $L_3$, but the angle in
  $y$ decreases. Hence the distance $\dist{y}{L_3}$ does
  not decrease.

  If we arrive in $y \in \S$,
  we are in a situation to apply (iv) again.
  If $L_3$ contains the segment $[q^1,q^2]$, then we may roll the inball along $L_3$ and
  rotate $L_i$, $i=1,2$, \st~they keep supporting the inball, until $y \in \S$.
  Hence the inball of $\conv\{y,q^1,q^2\}$ stays equal and
  it can easily be checked that the width of the triangle does not decrease
  under this change. In fact, we again arrive in the situation $C=\Triangle[r(K)][D(K)]$ as
  after (iv), when $y \neq e^2$.
\end{enumerate}
\end{proof}

\begin{figure}
  \centering
  \begin{subfigure}[b]{0.45\textwidth}
    \includegraphics[trim = 1cm 3.5cm 1cm 2cm, width =\textwidth]{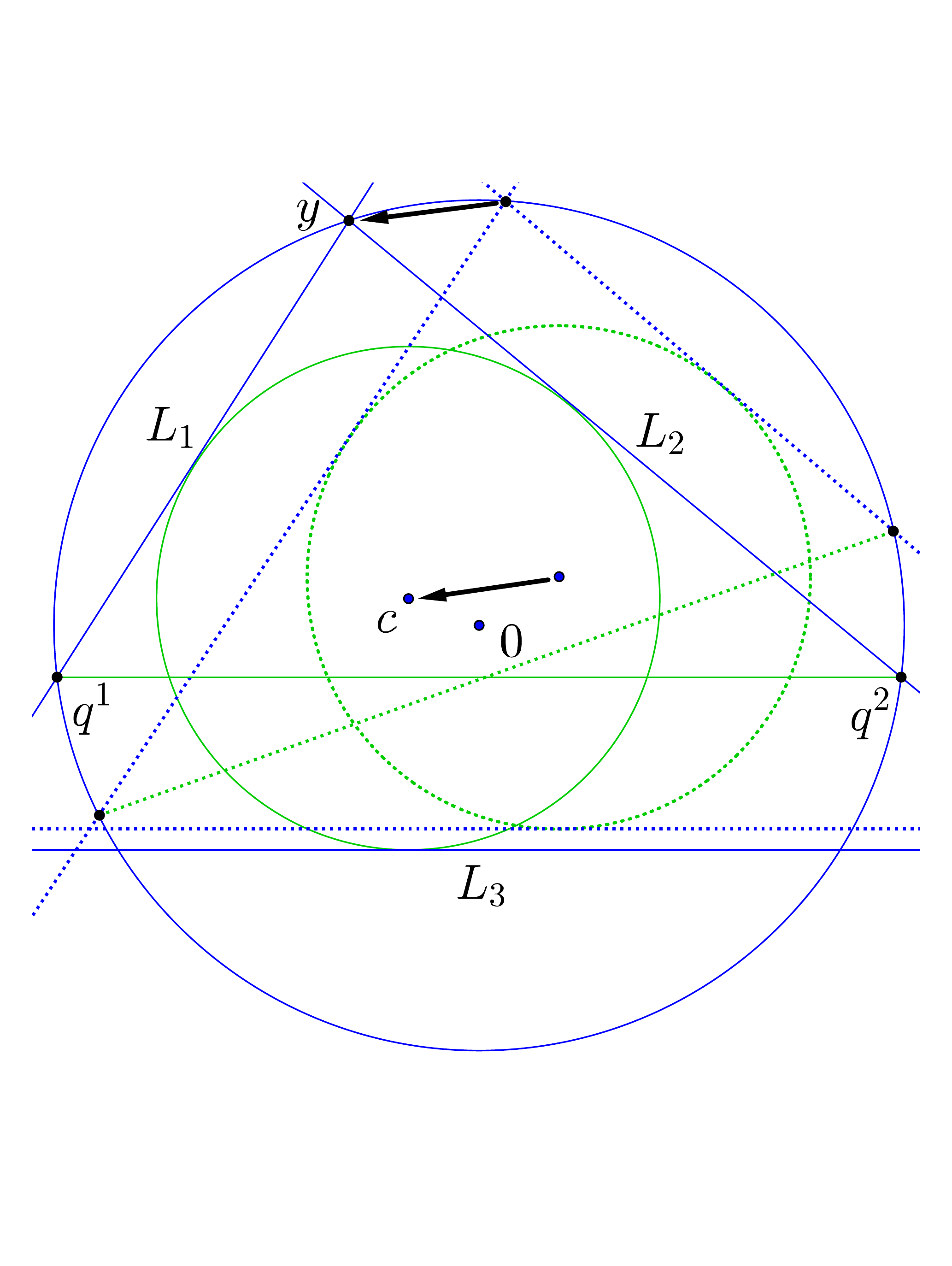}
    \caption{In (iii) the set rotates until $[q^1,q^2]$ becomes parallel to $L_3$.}
    \label{fig:Lem5.8Fig3}
  \end{subfigure} \hfill 
  \begin{subfigure}[b]{0.45\textwidth}
    \includegraphics[trim = 1cm 3.5cm 1cm 2cm, width = \textwidth]{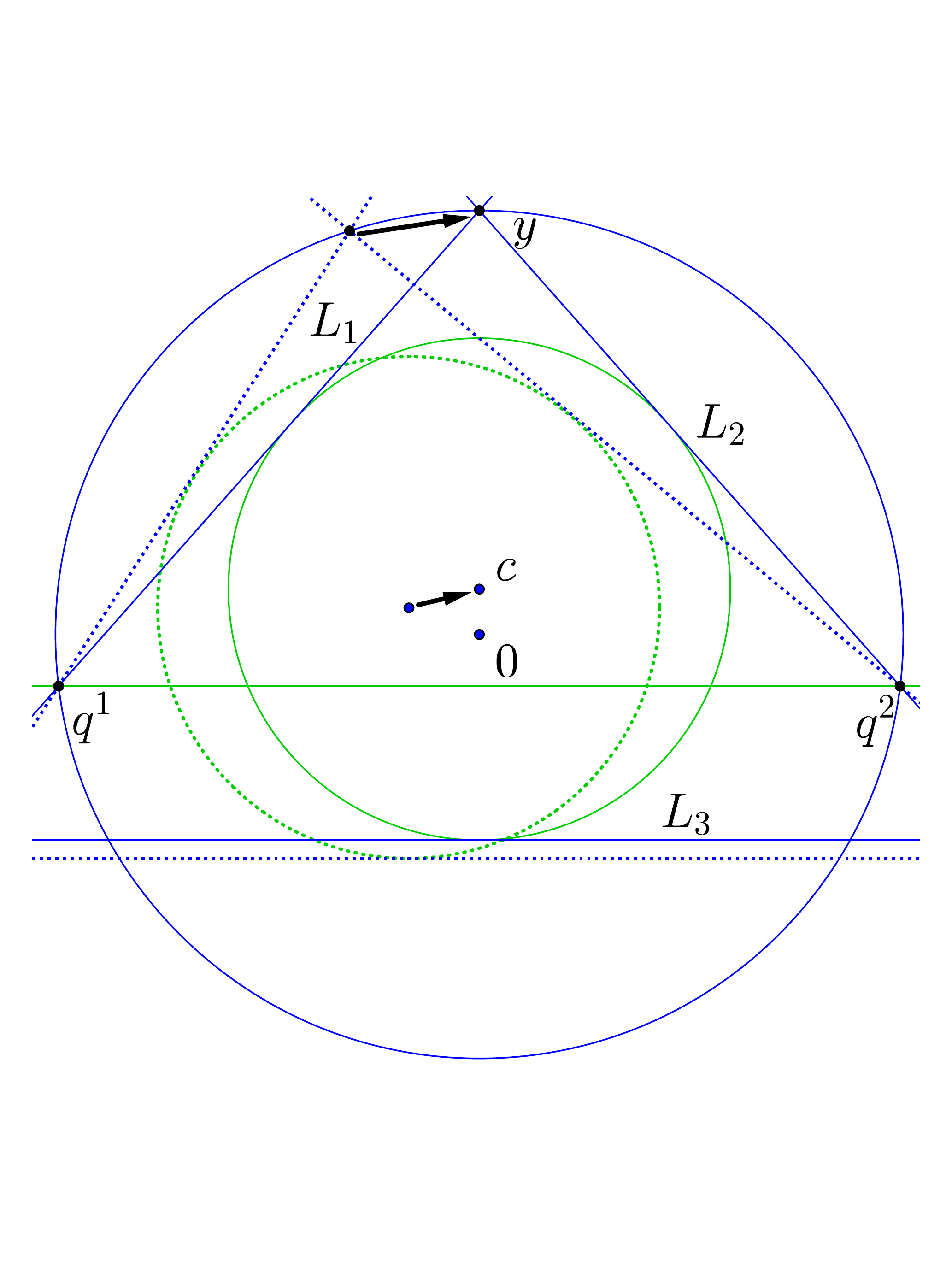}
    \caption{In (iv) (if $y\in\S$) $y$ moves inside $\S$ and may become $e^2$
      whereas $C=\SBoat$.}
    \label{fig:Lem5.8Fig4}
  \end{subfigure}
  \caption{Examples of (iii) and (iv) from Lemma \ref{lem:triangles and sail boats}.}
  \label{fig:Lem5.8Step34}
\end{figure}

\begin{proof}[Proof of Theorem \ref{thm:2}]
  The part of Theorem \ref{thm:2} that all sailing boats fulfill equality for \eqref{eq:sailing boats}
  directly follows from Lemma \ref{lem:triangles and sail boats}. Thus
  it only remains to show the general validity of the inequality \eqref{eq:sailing boats}.
  
  Since there exist isosceles triangles $\Iso$ and concentric sailing boats $\CSB$
  of the same diameter and circumradius as a given $K$ for an appropriate choice of
  $\gamma \in[\nicefrac{\pi}{3},\nicefrac{\pi}{2}]$,
  we only have to distinguish the cases
  \[\text{(i)} \quad  r(K) \leq r(\Iso), \qquad \text{(ii)} \quad r(\Iso) \leq r(K) \leq r(\CSB),
  \qquad \text{(iii)} \quad  r(K) \ge r(\CSB).
  \]
  
  
  Again we abbreviate $r=r(K), w=w(K), D=D(K)$, and $R=R(K)=1$.
  
  In case of (ii), $K$ fulfills the conditions of Lemma \ref{lem:triangles and sail boats} and
  we obtain $w\leq w(\SBoat[r][D])$, which suffices as mentioned above.
  For the other two cases we \emph{extend} the construction of the general sailing boats from
  Subsection \ref{sailing-boat-facet}:
  
  For any pair $r,D$ obtained from $K$, let $\Iso=\conv\{p^1,p^2,p^3\}$,
  $\gamma \in [\nicefrac{\pi}{3},\nicefrac{\pi}{2}]$ be the isosceles triangle with circumball
  $\B$ and diameter $D=\norm[p^1-p^2]=$ as well as $I_K:=\nicefrac{r}{r(\Iso)}(\Iso-p^3)+p^3$
  the rescaled copy with inradius $r$, keeping the vertex $p^3$.
  
  By construction, $I_K$ belongs to the general sailing boats, $D(I_K) = \nicefrac{r}{r(\Iso)} \, D$,
  and $R(I_K) = \nicefrac{r}{r(\Iso)} \, R$. Hence it fulfills \eqref{eq:sailing boats} with equality.
  However, since
  \[r(I_K) \left( 1 + \frac{2\sqrt{2}R(I_K)}{D(I_K)}
    \sqrt{1 + \sqrt{1-\left(\frac{D(I_K)}{2R(I_K)}\right)^2}}\right)
  = r \left( 1 + \frac{2\sqrt{2}R}{D} \sqrt{1 + \sqrt{1-\left(\frac{D}{2R}\right)^2}}\right)\]
  it suffices to show that $w \le w(I_K)$.
  
  Now, we first consider case (i).
  
  Using Lemma \ref{lem:triangles and sail boats} and the notation used there, we know there exists a
  triangle $\Triangle[r][D] = \conv\{q^1,q^2,y\}$ in the triangle face, \st~$\norm[q^1-q^2]=D$
  and $w \leq w(\Triangle[r][D]) = \dist{y}{[q^1,q^2]}$.
  Hence we just need to prove $w(\Triangle[r][D]) \leq w(I_K)$.

Similar to (iv) of Lemma \ref{lem:triangles and sail boats}, we now transform $\Triangle[r][D]$
by moving $y$ within $\S$ until $y=p^3$, ignoring the stopping condition \enquote{when $L_3$ contains
  $[q^1,q^2]$}.
Because of ignoring the stopping condition, the inball will not touch $[q^1,q^2]$ anymore, but
a line $L$ parallel to $[q^1,q^2]$, which means that we arrived at a triangle congruente with $\Iso$
and inradius $r$, which is $I_K$.
Thus $\dist{p^3}{L} = w(I_K)$ and we may argue as in (iv) of Lemma \ref{lem:triangles and sail boats}
that $w(\Triangle[r][D]) \leq  w(I_K)$, which shows the assertion.

Finally, assume we are in case of (iii).
We know from Subsection \ref{ub1-facet} that the outer parallel bodies $K'$ of
a concentric saling boat or a Reuleaux blossom satisfy
$r(K')=r$, $D(K')=D$, $R(K')=R$, and $w \le w(K') = r(K') + R(K')$.
Hence we just need to show that $w(K') \leq w(I_K)$ again.

Now, consider the concentric sailing boat $\CSB$. It shares $p^3$ and its inside angle $\gamma$ with
$I_K$ and has a smaller inradius. Thus it follows from the concentricity of the in- and circumradius
of $\CSB$ that $c_2<0$ holds for the incenter $c$ of $I_K$
has a negative second component $\eta$. Hence
$w(I_K) = r + R + |c_2| \ge r(K') + R(K') =w(K')$
which finishes the proof.
\end{proof}

\medskip

\begin{proof}[Proof of Theorem \ref{thm:4}]
In case of $r(K) \le r(\Iso[\arcsin(\nicefrac{D(K)}{2R(K)})])$ Part (a)
  of Lemma \ref{lem:triangles and sail boats}
implies $w(K) \le w(\Triangle)$, proving the validity of \eqref{eq:triangles} in that case.

Thus we may assume \Wlog that $r(K) \ge r(\Iso[\arcsin(\nicefrac{D(K)}{2R(K)})])$.
Observe two facts: first, if $r(K) = r(\Iso[\arcsin(\nicefrac{D(K)}{2R(K)})])$, then the two right hand sides of
\eqref{eq:sailing boats} and \eqref{eq:triangles} coincide and equal
$w(\Iso[\arcsin(\nicefrac{D(K)}{2R(K)})])$. Omitting again the argument $K$, we obtain
\begin{equation} \label{eq:ub2=ub3}
\frac{w}{r}=1+\frac{2\sqrt{2}R}{D}\sqrt{1+\sqrt{1-\left(\frac{D}{2R}\right)^2}}=
\frac{2}{D}\left(D+\frac{2rR}{D}\left(1+\sqrt{1-\left(\frac{D}{2R}\right)^2}\right)\right)
\end{equation}
in that case.
The second fact to be observed is that in \eqref{eq:ub2=ub3} the middle expression does not depend on $r$,
while the right hand part is increasing in $r$. Hence knowing the general validity of \eqref{eq:sailing boats},
we may conclude
\[
\frac{w}{r} \le 1+\frac{2\sqrt{2}R}{D}\sqrt{1+\sqrt{1-\left(\frac{D}{2R}\right)^2}} \le
\frac{2}{D}\left(D+\frac{2rR}{D}\left(1+\sqrt{1-\left(\frac{D}{2R}\right)^2}\right)\right).
\]
\end{proof}

Now, we turn to the open part of the lower boundary and
start with a technical corollary needed in order to prove Theorem \ref{thm:3}.

\begin{cor}\label{cor:min width}
  Let $K \in \CK^n$ and $c \in\R^n$, \st~$c+r(K)\B$ and $\B$ are the in- and circumball of $K$,
  respectively, $p^1,\dots,p^k \in K \cap \S$ be the points given by Part (a) of Proposition \ref{prop:CONT},
  $T':= \conv\{p^1,\dots,p^k\}$, and $C:=\conv(T', c+r(K)\B)$.
  Then $D(C) = \max\{D(T'),\norm[p^i-c]+r(C),i \in [k]\}$.
\end{cor}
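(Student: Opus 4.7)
The plan is to prove the two inequalities comprising the equality separately; the ``$\ge$'' part is routine and the ``$\le$'' part carries the substance. As a preliminary step I would record that $r(C) = r(K)$: the inclusions $c + r(K)\B \subset C \subset K$ give both $r(C) \ge r(K)$ and $r(C) \le r(K)$ via monotonicity of the inradius under set inclusion, so $r(K)$ and $r(C)$ may be used interchangeably.

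The lower bound $D(C) \ge \max\{D(T'),\norm[p^i-c]+r(C): i \in [k]\}$ follows by exhibiting pairs of points in $C$ realizing each candidate: any diametral pair of $T' \subset C$ gives $D(T')$, and for each $i \in [k]$ the segment joining $p^i$ to the point of $c + r(C)\S \subset C$ antipodal to $p^i$ with respect to $c$ has length $\norm[p^i-c]+r(C)$.

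For the reverse inequality I would invoke Part (a) of Proposition \ref{prop:exposed} to reduce to the case where the diameter of $C$ is attained between two extreme points. Since $C = \conv(T' \cup (c + r(K)\B))$, the standard inclusion $\ext(\conv(A\cup B)) \subseteq \ext(A) \cup \ext(B)$ yields $\ext(C) \subseteq \{p^1,\dots,p^k\} \cup (c + r(K)\S)$. Distinguishing whether each element of the diametral pair belongs to $\{p^1,\dots,p^k\}$ or to the sphere $c + r(K)\S$ produces three upper bounds, respectively $D(T')$, $\max_i \norm[p^i-c]+r(K)$ (using that the farthest point on a sphere from an exterior point is its antipode through the centre), and $2r(K)$.

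The only remaining matter is to absorb the bound $2r(K)$ into the others. Since $p^i \in \S$ and $c + r(K)\B \subset \B$ force $\norm[c]+r(K) \le 1$, the reverse triangle inequality gives $\norm[p^i-c] \ge \norm[p^i]-\norm[c] \ge r(K)$ for every $i$, hence $2r(K) \le r(K) + \norm[p^i-c]$; combined with $r(C)=r(K)$ this finishes the argument. I do not anticipate a genuine obstacle, as the proof is essentially an enumeration of cases plus the extreme-point inclusion, which I would invoke as a standard fact.
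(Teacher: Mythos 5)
Your proposal is correct and follows essentially the same route as the paper: reduce to diametral pairs of extreme points via Proposition \ref{prop:exposed}, note that $\ext(C)$ lies in $\{p^1,\dots,p^k\}\cup(c+r(K)\S)$, and enumerate the cases. The only (immaterial) difference is that the paper dismisses the insphere--insphere case up front by observing $D(C)>2r(C)$ whenever $C\neq\B$, whereas you keep that case and absorb the bound $2r(K)$ into $\norm[p^i-c]+r(C)$ using $\norm[c]+r(K)\le 1$; both work.
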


\begin{proof}
  Since the statement is obviously true if $K=\B$, we may assume \Wlog that $K \neq \B$
  and therefore $C \neq \B$. This means the diameter of $C$ is
  bigger than $2r(C)$, the distance of two antipodal points of the inball,
  but due to Proposition \ref{prop:exposed} attained between two extreme points.

  Thus if it is not attained between a pair of the vertices $p^1,\dots,p^k$, it must be between one of
  them and its antipodal on the insphere.
\end{proof}

\begin{rem}\label{rem:min width}
  Let $K \in \CK^2$, $c \in\R^2$, $p^1,\dots,p^k \in K \cap \S$, $T'$, and $C$
  be given as in Corollary \ref{cor:min width}.
  Denoting by $L_1,L_2$ a pair of parallel supporting lines of $C$, \st~$w(C)=d(L_1,L_2)$
  we may assume \Wlog due to Proposition \ref{prop:exposed} that $L_1$ has at least two contact
  points with $C$ and
  (by renaming and defining $p^3=p^2$ if neccessary) that $p^1$ is situated in one of
  the arcs in $\S$ between $L_1$ and $L_2$,
  while $p^2$ and $p^3$ belong to the other with $p^2$ closer to $L_1$ and $p^3$ closer to $L_2$.
  With this assumptions one of the following cases holds:
  \begin{enumerate}[(i)]
  \item $L_1$ contains $p^2$ but not $p^1$ and supports $c+r(K)\B$, whereas $L_2$ supports $c+r(K)\B$.
  \item $L_1$ contains $p^2$ but not $p^1$ and supports $c+r(K)\B$, whereas $L_2$ contains only $p^3$.
  \item $L_1$ contains $p^1$ but not $p^2$ and supports $c+r(K)\B$, whereas $L_2$ contains only $p^3$.
  \item $L_1$ contains $[p^1,p^2]$, whereas $L_2$ contains $p^3$ or supports $c+r(K)\B$.
  \end{enumerate}
  Due to Proposition \ref{prop:exposed} one of the sets $L_i\cap C$, $i=1,2$,
  say $L_1\cap C$ contains a smooth boundary point of $C$.
  Hence $L_1 \cap C$ is either a segment containing at least one of the points $p^1,p^2$,
  which means we are in Case (ii),(iii), or (iv), or $L_1$ supports the inball in a unique point
  (see Figure \ref{fig:Step1lb3} for an example of Case (ii)). However, in case $L_2 \cap C$ is a segment,
  we may interchange the roles of $L_1$ and $L_2$ arriving again in Case (ii), (iii), or (iv), or if $L_2$
  also supports $C$ only in a single boundary point of the inball, we may rotate
  $L_1$ and $L_2$ around it, \st~we may assume Case (i).
\end{rem}

The following lemma proves Theorem \ref{thm:3} apart from the general validity of the inequality.

\begin{lem}\label{lem:3 Bent Isosceles}
  Let $K\in\CK^2$ be \st~there exists a bent pentagon $\BPen$ from the facet
  $(lb_3)$ with the same inradius, circumradius, and diameter as $K$.
  Then $w(K) \geq w(\BPen)$.
\end{lem}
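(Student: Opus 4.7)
The plan is to exploit the inclusion $C:=\conv(T',c+r(K)\B)\subset K$, where $T'=\conv\{p^1,p^2,p^3\}$ is built from the points given by Proposition \ref{prop:CONT}(a) and $c$ is the incenter from Proposition \ref{prop:CONT}(b) applied to $K$. By the monotonicity of the four radii under set inclusion, $C$ inherits $r(C)=r(K)$, $R(C)=R(K)=1$, $D(C)\le D(K)$ and $w(K)\ge w(C)$. The strategy is to bound $w(C)$, possibly after enriching $C$ by diametral points of $K$, from below by $w(\BPen)$ using a case analysis following Remark \ref{rem:min width}.

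In Cases (ii) and (iii) of Remark \ref{rem:min width}, where the width-attaining line $L_1$ supports $c+r(K)\B$ and passes through exactly one of the vertices $p^2$ or $p^1$, while $L_2$ passes only through $p^3$, the geometry of $C$ directly reproduces the bent-pentagon setup. With the angles $\alpha,\beta,\mu$ laid out as in the derivation of the width formula \eqref{eq:width-bentpent}, one checks that for fixed $r$ and $R=1$ the quantity $d(L_1,L_2)$ is minimised, under the constraint $D(T')\le D$, precisely at the isosceles configuration of $\BPen$ with apex angle $\gamma=2\arccos(D/(2R))$. The required monotonicity is essentially supplied by Lemma \ref{lem:lb3properties}(b), applied after a continuous rotation of the $p^i$'s along $\S$ which brings $T'$ to the isosceles triangle with base of length $D$. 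Case (iv), where $L_1$ contains an entire edge of $T'$, gives $d(L_1,L_2)$ at least the distance from $p^3$ to $\aff\{p^1,p^2\}$; combined with $R=1$ and $D(T')\le D$, this height is bounded below by a direct trigonometric quantity that exceeds $w(\BPen)$.

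Case (i), where both $L_1$ and $L_2$ are tangent to $c+r(K)\B$ so that $w(C)=2r$, is the most delicate. Since one may have $w(\BPen)>2r$, the bound $w(K)\ge w(C)$ is in itself insufficient. The remedy is to use the assumption $D(K)=D$ and adjoin two diametral points $q^1,q^2\in K$ with $\|q^1-q^2\|=D$ to $T'$, replacing $C$ by the enriched body $C':=\conv(T'\cup\{q^1,q^2\},c+r(K)\B)\subset K$. The analysis of $w(C')$ then falls under one of Cases (ii)--(iv) above (applied to $C'$ in place of $C$), and the resulting bound yields $w(K)\ge w(C')\ge w(\BPen)$.

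The main obstacle will be establishing the sharp comparison in Cases (ii)--(iii), where one must verify that every admissible perturbation of $T'$ along $\S$ preserving $D(T')\le D$ and keeping the configuration within the same case does not decrease $d(L_1,L_2)$. Lemma \ref{lem:lb3properties}(b) supplies the core monotonicity in the apex angle $\gamma$, but justifying the reduction to the symmetric isosceles configuration requires a careful continuous-motion argument along $\S$, in spirit closely analogous to steps (iii)--(v) from the proof of Lemma \ref{lem:triangles and sail boats}.
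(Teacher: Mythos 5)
Your skeleton coincides with the paper's: pass to $C=\conv(T',c+r(K)\B)\subset K$, use monotonicity of the radii, split according to the tangency patterns of Remark \ref{rem:min width}, and invoke Lemma \ref{lem:lb3properties}(b) inside a continuous-motion argument. But the content of the paper's proof is exactly the part you defer as ``the main obstacle''. Lemma \ref{lem:lb3properties}(b) only compares two bent pentagons (both already in the isosceles, arc-tangent configuration) with each other; it says nothing about $d(L_1,L_2)$ for a general $C$ while the $p^i$ are rotated along $\S$. The paper's proof consists of three concrete deformation steps: (a) translating the inball until $c_1=0$, with Corollary \ref{cor:min width} used to check that this does \emph{not} increase $D(C)$ beyond $D(K)$ in each tangency case; (b) spreading $p^2,p^3$ along $\S$ until $\norm[p^1-p^2]=\norm[p^1-p^3]=D(K)$, with a sub-case analysis guaranteeing that an acute angle between $L_1$ and an edge of $\Iso$ can be arranged without increasing $d(L_1,L_2)$; and (c) translating the inball into tangency with the two diametral arcs and rotating $L_1$ around $p^2$ (or $p^1$), where a separate angle-monotonicity observation shows the breadth orthogonal to $L_1$ decreases. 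None of this is supplied by your outline, so as written the proposal is a plan rather than a proof.

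The one place where you commit to a concrete argument that differs from the paper --- Case (i) --- does not work as stated. Adjoining two diametral points $q^1,q^2$ of $K$ to $T'$ enlarges $C$, but gives no reason why the width of $C'$ should cease to be attained between two parallel lines both tangent to the inball: one may still have $w(C')=2r$, and Remark \ref{rem:min width} is formulated for the three points $p^1,p^2,p^3$ coming from Proposition \ref{prop:CONT}(a), so ``falling under Cases (ii)--(iv)'' is not even available for the five-point body $C'$. The paper closes this case quite differently: it argues that the tangency pattern of Case (i), after the normalization, only occurs when the parameters lie on the edge $(\BT,\H)$, where $w(\BPen)=2r$, so the trivial bound $w(K)\ge 2r$ already suffices. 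Likewise, your Case (iv) claim that the height of an arbitrary inscribed triangle with $D(T')\le D$ exceeds $w(\BPen)$ is asserted rather than proved; in the paper that case is only reached \emph{after} $T'$ has been deformed into $\Iso$, where the height equals $w(\Iso)$ exactly and the comparison is with a bent trapezoid from $(\BT,\FRT)$.
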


\begin{proof}
Using the same notation as in Corollary \ref{cor:min width}
we have $r(C)=r(K)$ and $R(C)=R(K)$ by definition as well as
$D(C)\leq D(K)$ and $w(C)\leq w(K)$
because of the monotonicity of the radii with respect to set inclusion.

The idea of the proof is to transform $C$ in several steps into a bent isosceles $\BIso$
from $(lb_3)$ of Subsection \ref{ss:facets}
keeping the same in- and circumradius at all time and guaranteeing that $D(\BIso) = D(K)$
and $w(\BIso) \leq w(K)$
at the end of the transformation (and obtaining the corresponding solution for $\BPen$).
More precisely, we know that the parallel supporting lines $L_1$ and $L_2$ of $C$ from
Remark \ref{rem:min width} satisfy $w(C)=d(L_1,L_2)$. Now, in every step of the transformation of $C$,
$d(L_1,L_2)$ will be decreased, however when arriving at $\BIso$ it again holds
$w(\BIso)=d(L_1,L_2)$ (as shown in $(lb_3)$).

To reduce notation formalities we assume \Wlog that $L_1,L_2$ are embedded horizontally
and we denote the inball by $\rB$.

\begin{enumerate}[(a)]
\item The first step is only needed in case of $c_1 < 0$. In this step all radii except the diameter
  of $C$ are kept constant, while the diameter may be reduced but not raised.

  If $L_1$ and $L_2$ are arranged as in Case (iii) of Remark \ref{rem:min width},
  then using Part (b) of Proposition \ref{prop:exposed}, we see that $c_1 <0$ is not possible as
  $c_1 \geq p^3_1\geq 0$ holds.
  In case of (ii) or (iv), we may translate $B$ parallel to $L_1$ until $c_1=0$.
  Because of Corollary \ref{cor:min width} this transformation does not increase
  $D(C)$: in both cases the only candidate
  distance for the diameter which is raised is $\norm[p^1-c]+r(C)$, but in case of (iv) $\norm[p^1-c]$ is
  before and after the
  transformation bounded from above by $\norm[p^2-c]$ and in case of (ii) it is bounded from above by
  $\norm[p^2-c]$ or by $\norm[p^3-c]$.

  Finally, Case (i) can be handled almost the same. If $p^3$ is closer to
  $L_2$ than $p^1$, again $\norm[p^2-c]$ is bounded from above by $\norm[p^2-c]$
  or by $\norm[p^3-c]$ and we may directly move $B$ parallel to $L_1$ until $c_1=0$.
  If, on the contrary, $p^1$ is the point closer to $L_2$, then we first rotate $C$ between
  $L_1$ and $L_2$ until $p^1$ and $p^3$ get into same distance to $L_2$ and then we may do the movement of $B$.

\item Translate $p^2$ and $p^3$ on $\S$ within the arcs between $L_1$ and $L_2$ they belong to,
  until $\norm[p^1-p^2]=\norm[p^1-p^3]=D(K)$. Since $c_1 \geq 0$ and $D(C) \le D(K)$ before the transformation
  we have $D(C)=D(K)$ after the transformation and we keep at least $p^1$ on $L_1$ or $L_1$ tangent
  to the unit ball.
  Moreover, if neccessary, moving $L_2$ parallel to its prior position until it supports $C$ again,
  $L_2$ touches $p^3$ or $\rB$ (see Figure \ref{fig:Step1lb3}),
  $d(L_1,L_2)$ does not increase, and $r(C)$ and $R(C)$ stay constant.

  However, in each situation where we only touch two points after the transformation,
  we may additionally rotate $L_1$ and $L_2$ around the vertices or along the insphere, respectively,
  not increasing their distance, until we obtain a third touching point of the two lines with $C$.
  In the following we distinguish the following cases,
  (exchanging, if necessary, the roles of $L_1,L_2$ and $p^2,p^3$, respectively,
    to attain one of them)
  \begin{enumerate}[(i)]
  \item $L_1$ contains $p^1$, or
  \item $L_1$ contains $p^2$ and supports $\rB$ and $L_2$ contains $p^3$, or
  \item $L_1$ does not contain any of the points $p^i$, while $L_2$ contains $p^3$ but no other.
  \end{enumerate}
  In all three cases we will search for a situation, in which the angle between
  $L_1$ with one of the edges of $\Iso$ is acute.
  If (i) holds, the angle between $L_1$ and $[p^1,p^3]$ must always be acute as $p^3$ lies on the right side
  of $p^1$.

  In case of (ii),
  the angle between $L_1$ and $[p^2,p^3]$ can either be acute (see Figure \ref{fig:Step2lb3})
  or obtuse, as $p^3$ could even be on the right of $p^2$.
  If the latter happens we rotate
  $L_1$ around $p^2$ (thus possibly loosing contact with $\rB$)
  and $L_2$ around $p_3$, keeping them parallel, until $L_2$ supports $\rB$, allowing a zero degree
  rotation in the case that $L_2$ supported $\rB$ from the beginning. This does not increase $d(L_1,L_2)$.

  Finally if (iii) holds, the angle between $L_2$ and $[p^2,p^3]$ could be obtuse.
  Then we rotate both lines $L_1, L_2$ along $\rB$, loosing contact with $p^3$,
  until $L_2$ touches $p^1$ or $L_1$ touches $p^2$ (whichever comes first).
  In case $L_2$ touches $p^1$ first we are back in (i). Thus assume
  $L_1$ touches $p^2$ first.
  Compare with the bent isosceles $\BIso$ we want to arrive at:
  Because of our movement in the beginning of (b), we have that $\conv\{p^1,p^2,p^3\}$
  is an isosceles triangle with inball $\rB$ contained in $C$. Thus identifying it
  with
  $\Iso \subset \BIso$ yields that the
  supporting lines $L_1', L_2'$ of $\BIso$ contain $p^2$ and $p^3$, respectively,
  and contain between them the inball of radius $r$. Thus it holds $\norm[p^2-p^3]\ge 2r$.
  Considering $C$ again, since the angle between $L_2$ and $[p^2,p^3]$ was obtuse
  before the rotation of $L_1, L_2$ in (iii), the incenter $c$ is closer to $p^3$ than
  to $p^2$. But since  $\norm[p^2-p^3]\ge 2r$, this means after the rotation
  the angle between $L_1$ and $[p^2,p^3]$ must be acute.
  Exchanging if necessary $L_1$ with $L_2$ and $p^2$ with $p^3$,
  we are back again in the cases (i), (ii), (iii) or (iv)
  of Remark \ref{rem:min width}, also not guaranteeing that the distance between those lines
  defines the width of $C$, but knowing that the angle between $L_1$ and $[p^2,p^3]$
  (if (i), (ii) or (iv) hold) or $L_1$ and $[p^1,p^3]$ (if (iii) holds) is
  acute (see Figure \ref{fig:Step3lb3}).

  \begin{figure}
    \centering
    \begin{subfigure}[b]{0.45\textwidth}
      \includegraphics[trim = 1cm 3.5cm 1cm 2cm, width =\textwidth]{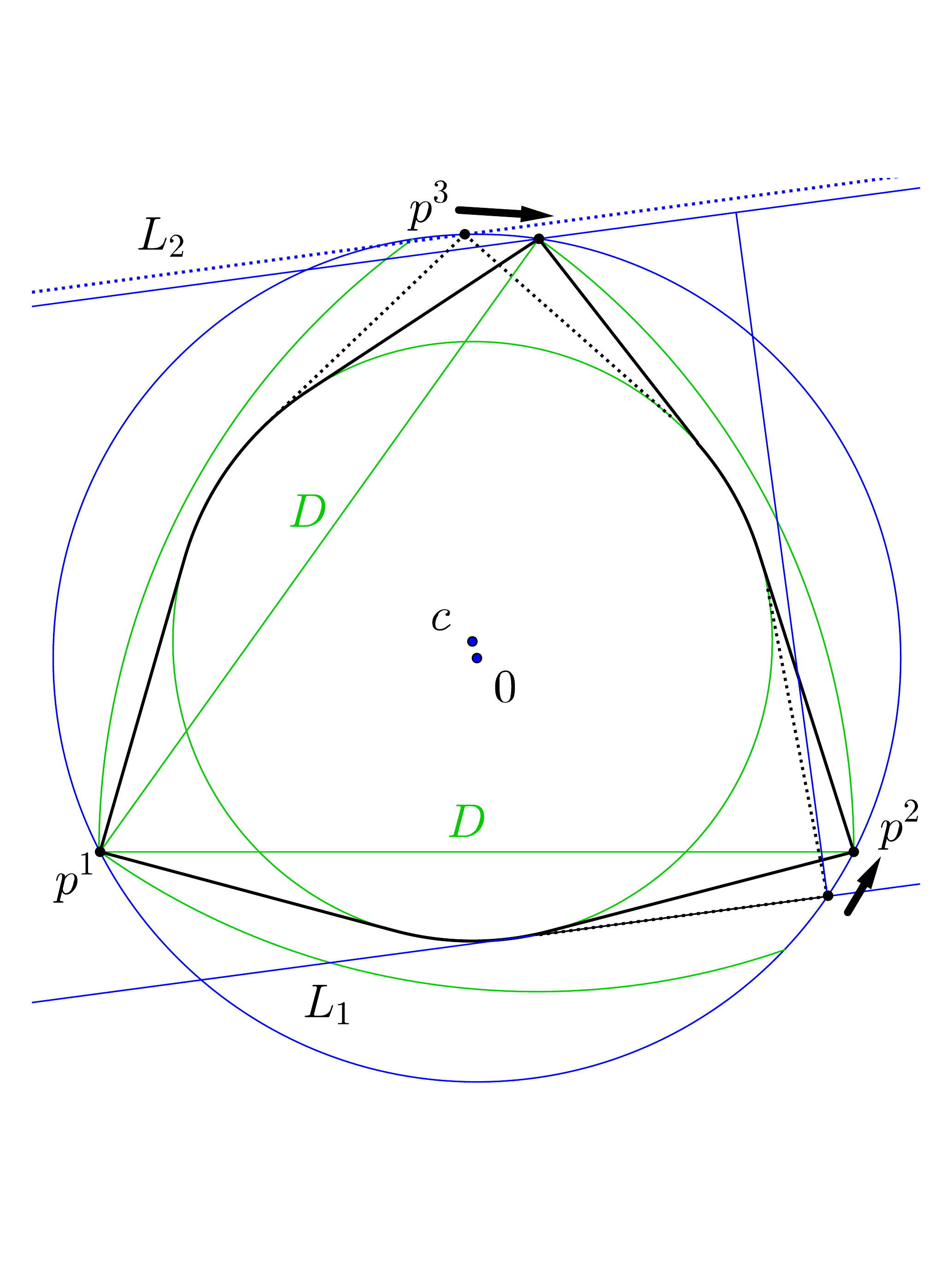}
      \caption{In (b), the tangencies correspond to Part (ii) of Remark \ref{rem:min width}.
        While moving $p^2, p^3, L_1$ and $L_2$ some of the tangencies can be lost but we obtain that
        $\Iso$ is contained in $C$. \\ \hfill
      }
      \label{fig:Step1lb3}
    \end{subfigure} \hfill 
    \begin{subfigure}[b]{0.45\textwidth}
      \includegraphics[trim = 1cm 3.5cm 1cm 2cm, width = \textwidth]{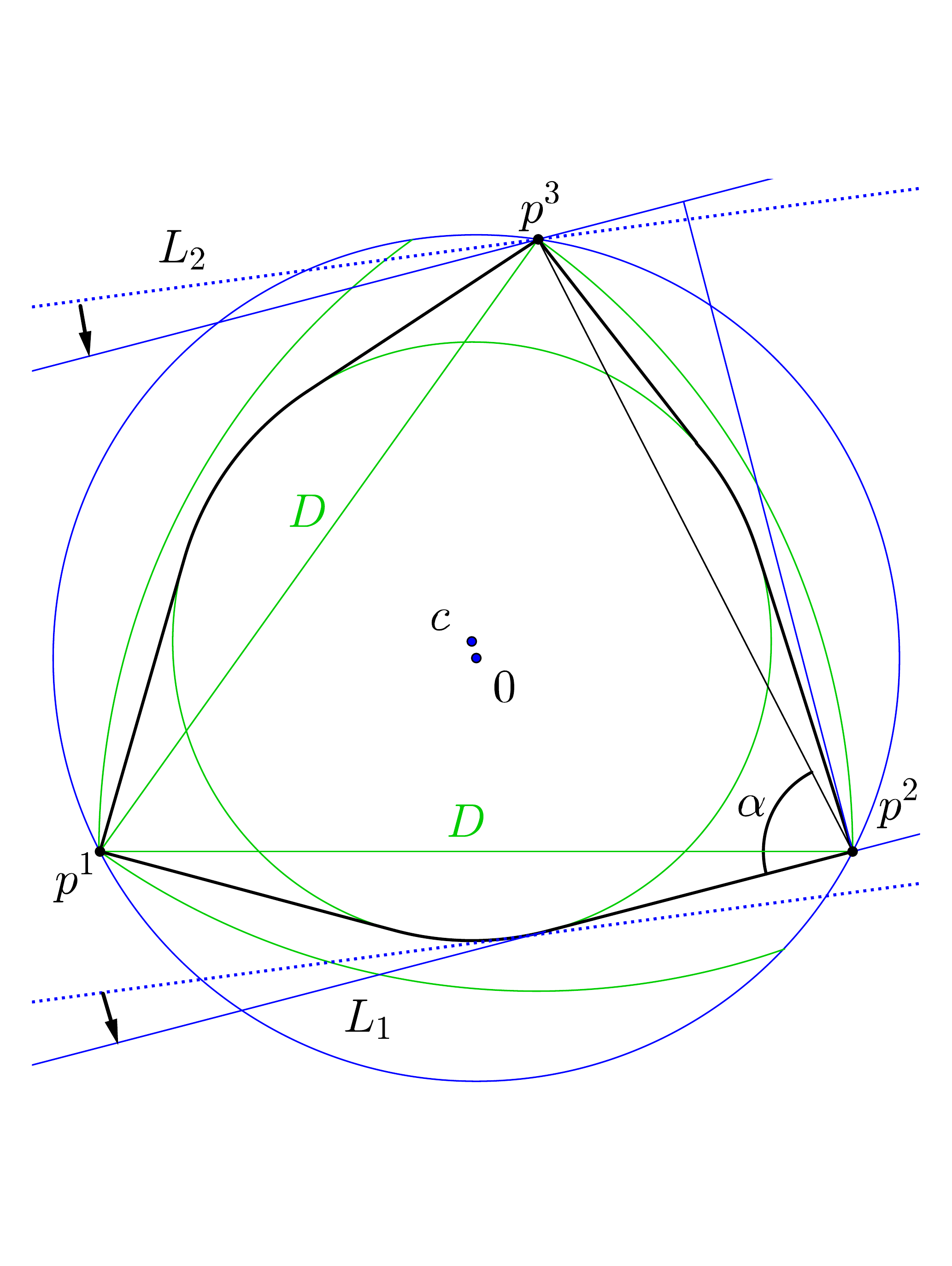}
      \caption{We rotate $L_1, L_2$ around $C$ until $L_1$ or $L_2$ supports more than one point of $C$,
          arriving, \eg, in the situation in
          which $L_1$ contains $p^2$ and supports
          $\rB$, $L_2$ supports $p^3$, and $\alpha$ is acute here.
          Then we are back into the tangencies of Part (ii).}
      \label{fig:Step2lb3}
    \end{subfigure}
    \caption{Transformations of $C$ during (b).
      Here the green bows indicate arcs of radius $D$ and centers in $p^1, p^2, p^3$, defining a
        region in which both, $\rB$ and the $p^i$, have to be contained.}
    \label{fig:sc5lb3}
    \end{figure}

\item In the last step of the transformation of $C$ we only move $\rB$ and $L_1, L_2$,
  keeping $r(C), D(C)$, and $R(C)$ constant.
  Independently of the tangencies (i)--(iv) of Remark \ref{rem:min width},
  $\rB$ is translated until it becomes tangent with the pair of arcs with centers
  $p^1, p^2$ and radius $D(K)$, finishing the transformation of $C$ into $\BIso$.
  Finally $L_1$ is rotated around $p^1$ (in case of Part (iii)
  of Remark \ref{rem:min width}) or around $p^2$ (in all other cases)
  keeping it tangent to $\rB$ and $L_2$ keeping it parallel to $L_1$ and supporting $C$.
  A simple but crucial observation is the following: assuming that $L_1$ contains $p^2$, it was shown
  in $(lb_3)$ of Subsection \ref{ss:facets} that $\rB$ is the inball of $\BPen$, touching its boundary in
  the diametrical arcs around $p^1$,$p^2$ and in $L_1$. Therefore
  any translation of $\rB$ within the region spanned by the two arcs would
  lead to an intersection of $L_1$ with the interior of $\rB$,
  which means that before the rotation of $L_1$, its angle with $[p^2,p^3]$
  was not smaller than after.
  This observation implies that the breadth $b_{s}([p^2,p^3])$ with $s$ orthogonal to the two lines is reduced
  by the rotation.

  \begin{figure}
    \begin{center}
      \includegraphics[trim = 0cm 4cm 0cm 4cm, width=9cm]{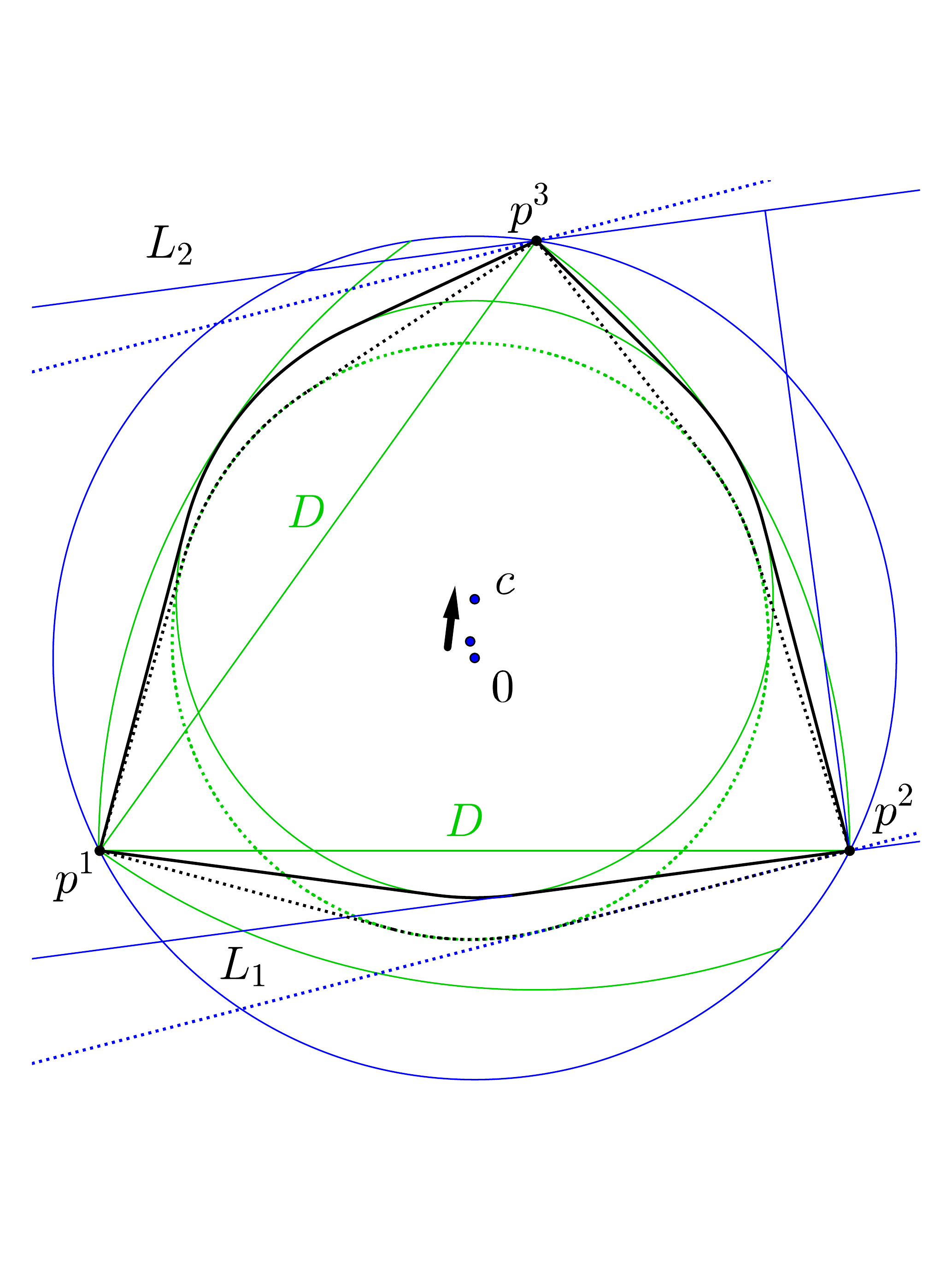}
      \caption{In (c) the inball moves, $L_1$ is rotated around $p^2$ reducing
        the angle with $[p^2,p^3]$  and $L_2$ to keep parallel with $L_1$
        until $C=\BIso$.}
      \label{fig:Step3lb3}
    \end{center}
  \end{figure}

  However, in Cases (ii) and (iii) of Remark \ref{rem:min width} it obviously
  holds $b_{s}([p^2,p^3])=d(L_1,L_2)$
  and since $d(L_1,L_2)$ did not increase in any step of the transformation we obtain
  $w(K) \ge d(L_1,L_2) \ge w(C)$.

  Finally, consider the remaining cases, (i) and (iv): they describe the extremal
  situation when $C$ shares radii with a set from the edges $(\BT,\H)$ or $(\BT,\FRT)$.
  In case of (i) $L_1$ and $L_2$ support $\rB$, which means $d(L_1,L_2)=2r$ and therefore that
  $w(C)=d(L_1,L_2)\le w(K)$. In case of (iv) we have $L_1 \supset [p^1,p^2]$ and $p^3 \in L_2$ parallel to $L_1$.
  Hence $w(C) \le d(L_1,L_2) = w(\Iso)$, within the given parameters for $\gamma$, and since $\Iso \subset C$
  it follows $w(C) = d(L_1,L_2)$.
\end{enumerate}
\end{proof}

\begin{proof}[Proof of Theorem \ref{thm:3}]
  As before we abbreviate $r(K)=r$ and the same for the other radii.
  In order to show the general validity of the inequality \eqref{eq:3PT}, we split the proof into
  the following cases:
  \[
  \begin{split}
    \text{(i)} \quad 8r \ge 3D, \gamma \ge \gamma_r, r \le r(\H), \qquad
    & \text{(ii)} \quad 8r < 3D \\
    \text{(iii)} \quad \gamma < \gamma_r, r(\BT) \le r \le r(\H), \qquad
    & \text{(iv)} \quad r > r(\H).
  \end{split}
  \]
  Recognize that in case of (i) there exists a bent pentagon $\BPen$, as we have shown with
  the help of Lemma \ref{lem:lb3properties} in $(lb_3)$. Thus we are under the conditions of
  Lemma \ref{lem:3 Bent Isosceles} in that case.

  In the remaining cases, consider the generalized bent pentagon $\BPen$ as defined in the
  description of the facet $(lb_3)$ (together with all the notation used there)
  and observe that the distance $d(L_1,L_2)$ may in any case be computed as the width in \eqref{eq:width-bentpent}.
  The angle $\beta$ may become $-\beta$ in Case (ii) (\cf~~Figure \ref{fig:GenBP})
  or the angle $\mu$ may become $-\mu$ in Cases (iii) or (iv),
  whenever the angle between $L_1$ and $[p^2,p^3]$ is bigger than
  $\nicefrac{\pi}{2}$.
  In both cases this change of sign does not affect the final value of the right hand side of the
  inequality \eqref{eq:3PT} to coincide with $d(L_1,L_2)$.

  \begin{figure}
    \begin{center}
      \includegraphics[trim = 0cm 5cm 0cm 5cm, width=10cm]{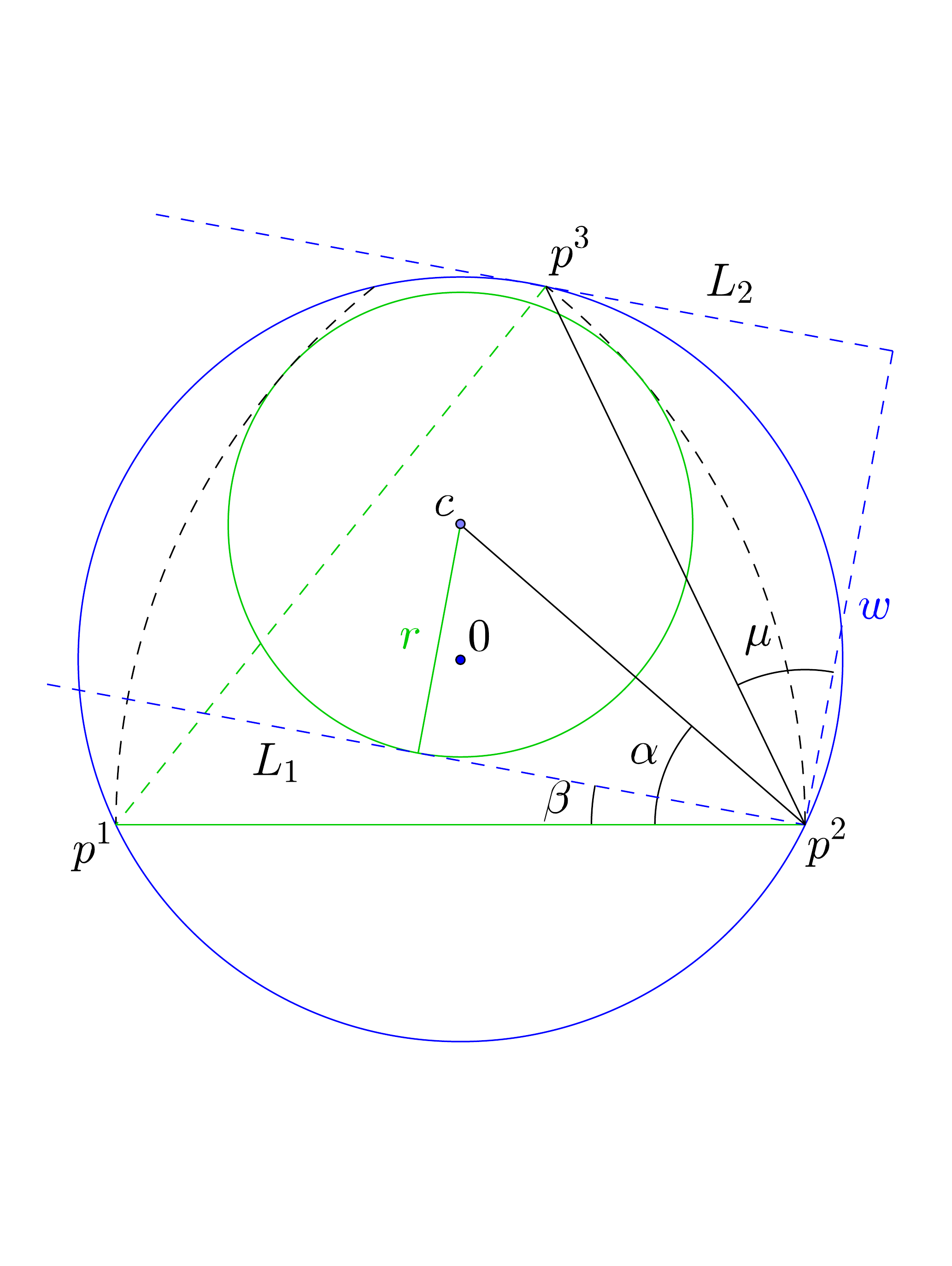}
      \caption{If $3 D > 8 r$, the angle $\beta$ in the computations in $lb_3$ (\cf~Figure \ref{fig:lb_3})
        becomes $-\beta$, but does not change the final equation for $d(L_1,L_2)$.}
      \label{fig:GenBP}
    \end{center}
  \end{figure}

  Hence it suffices to show $w \geq d(L_1,L_2)$.
  For this assume \Wlog that $[p^1,p^2]$ is horizontal and below $0$, that $p^1_1\le p^2_1$, and that
  $p^3_2\ge 0$.

  In case of (ii), Part (a) of Lemma \ref{lem:lb3properties} ensures that $[p^1,p^2]$
  does not intersect $\rB$.
  Thus the slope of the $L_i$'s is negative,
  and considering the line $L$ containing $[p^1,p^2]$, the angle between $L_1$ and $[p^2,p^3]$ is
  smaller than the angle between $L$ and $[p^2,p^3]$ (\cf~Figure \ref{fig:GenBP}).
  Hence, denoting the line containing $p^3$ and parallel to $L$ by $L'$, their distance satisfies
  $d(L_1,L_2) \le d(L,L') = w(\Iso) \le w$.

  Now, let us assume that (iii) is true, but not (ii). Then we know from
  Part (b) of Lemma \ref{lem:lb3properties}, that the distance $d(L_1,L_2)$ decreases if $\gamma$ decreases.
  Using $\gamma \leq \gamma' = \gamma_r$ we obtain $d(L_1,L_2) \le d(L_1',L_2') =
  w(\BIso[r][\gamma_r]) =2r \le w$.

  Finally, for the treatment of (iv), one should first observe two easy facts: first, since
  $p^2 \in L_1$ and $p^3 \in L_2$ we have $d(L_1,L_2) \le \norm[p^2-p^3]$ and second
  if $r=r(\H),\gamma=2\arccos(\nicefrac{D(\H)}{2})$, and $L_1^{\H}, L_2^{\H}$ are the according support lines of $\H$,
  then $L_1^{\H}, L_2^{\H}$ are perpendicular to $[p^2,p^3]$ and thus $\norm[p^2-p^3]=w(\H)=2r(\H)$
  (\cf~the description of $\H$ in Subsection \ref{ss:vertices}).
  From (iv) and inequality \eqref{ib_2} we obtain that $D(\H)=r(\H)+1\le r+1\le D$ and
  since $[p^2,p^3]$ is the shorter edge of $\Iso$ we have $\norm[p^2-p^3]=\nicefrac{D}{R}\,\sqrt{4R^2-D^2}$
  which is a decreasing function on $D$.
  Hence $\norm[p^2-p^3]$ is maximized, when $\gamma=\gamma_r$, \ie when $\norm[p^2-p^3] = w(\H)$.
  Thus using inequality \eqref{lb_1} we obtain
  \[ d(L_1,L_2)\le\norm[p^2-p^3]\leq w(\H)=2r(\H)\leq 2r\leq w, \]
  which completes the proof.
\end{proof}

\section{Final remarks} \label{s:outview}

For finishing the paper, let us give two final remarks:

\medskip

First, for some practical purposes it could be of some value to be able
to replace the sometimes quite unhandy non-linear
inequalities by linear ones. Thus knowing the full extend of the diagram know,
it would be worthwhile to develope a complete system of linear inequalities supporting the diagram.
Since the convex hull of the vertices does not contain the full diagram (the supporting plane of $\L,\EqT$,
and $\RAT$ separates $\CSB$ from major parts of the diagram) and since all edges and facets are smooth,
this system cannot be finite.

\medskip

Second, especially considering the application of Blaschke-Santal\'o diagrams given in \cite{RDP6, RDP5, Pr},
  consider the following problem: suppose two convex sets $K$ and $K'$ are mapped to the same point in the diagram,
  how \enquote{different} may $K$ and $K'$ be? Before giving any answer to this question, we should first develope an
  idea, how to measure this \enquote{difference}.
  For this neither the usual Hausdorff nor the Banach-Mazur distance can be taken.
  For the Hausdorff distance any $K$ and some of its rotations may be quite far from each other, while
  the Banach-Mazur distance would mark (\eg) all simplices equal.
  A good choice for this task could be taking the Hausdorff distance within the class of
  similarities of the two sets. However, to the best of our knowledge, this distance is not considered in
  literature so far.

\bigskip

{\bf Acknowledgements:} We would like to thank Viviana Ghiglione and Evgeny Zavalnyuk
for giving crucial hints, as well as Peter Gritzmann, Maria Hern\'andez Cifre, and Salvador Segura Gomis
for always supporting us.

\bibliographystyle{amsplain}

\end{document}